\def\diam{{\rm diam}}
\def\dist{{\rm dist}}
\newtheorem{lemma}{Lemma}
\newtheorem{definition}{Definition}
\newtheorem{theorem}{Theorem}
\newtheorem{assumption}{Assumption}
\newtheorem{proposition}{Proposition}
\numberwithin{lemma}{section}
\numberwithin{example}{section}
\numberwithin{theorem}{section}
\numberwithin{proposition}{section}
\numberwithin{remark}{section}
\begin{document}

\title{A Convergent Inexact Abedin-Kitagawa Iteration
Method for Monge-Amp{\`e}re Eigenvalue Problems
\thanks{This work was funded by the National Key R \& D Program of China (No. 2021YFA001300), the
National Natural Science Foundation of China (Nos. 12271150, 12471374), the Hunan Provincial Natural Science
Foundation of China (Nos. 2023JJ10001, 2025JJ40001), 
the Science and Technology Innovation Program of Hunan
Province (No. 2022RC1190), and the Key Scientific Research Project of the Education Department of Hunan Province (No. 23A0034).}
}

\date{Received: date / Accepted: date}
\author{
	Liang Chen\thanks{School of Mathematics, Hunan University, Changsha 410082, China, and 
		Hunan Provincial Key Laboratory of Intelligent Information Processing and Applied Mathematics, Changsha 410082, China
		(\url{chl@hnu.edu.cn}).} 
	\quad  
	Youyicun Lin\thanks{School of Mathematics, Hunan University, Changsha, 410082, China
		(\url{linyouyicun@hnu.edu.cn}).}
	\quad 
	Junqi Yang\thanks{School of Mathematics, Hunan University, Changsha, 410082, China
		(\url{junqiyang@hnu.edu.cn}).}
	\quad
	Wenfan Yi\thanks{School of Mathematics, Hunan University, Changsha 410082, China, and 
		Hunan Provincial Key Laboratory of Intelligent Information Processing and Applied Mathematics, Changsha 410082, China
		(\url{wfyi@hnu.edu.cn}).} 
} 

\date{}

\maketitle
\begin{abstract}

This paper proposes an inexact Aleksandrov-solution-based iteration method, formulated by adapting the convergent Rayleigh inverse iterative scheme introduced by Abedin and Kitagawa, to solve real Monge-Amp{\`e}re eigenvalue (MAE) problems. 
The central feature of the proposed approach is the introduction of a flexible error tolerance criterion for computing inexact Aleksandrov solutions to the required subproblems. This allows the inner iteration to be solved approximately without compromising the global convergence properties of the overall scheme, as we established under a ${\cal C}^{2,\alpha}$ boundary condition, and has the potential of achieving reduced computational cost compared to the original algorithm. 
In practice, for both two- and three-dimensional problems, by leveraging the flexibility of the inexact iterative formulation in conjunction with a fixed-point approach for solving the subproblems, the proposed method performs several times faster than its original version of Abedin and Kitagawa, across all tested problem instances in the numerical experiments. 

\bigskip
\noindent
{\bf Keywords:}
Monge-Amp{\`e}re eigenvalue problem,
Abedin-Kitagawa iteration method,
Inexact Aleksandrov solution,
Fixed-point method,
Numerical experiments

\medskip
\noindent 
{\bf MSCcodes:}
65H17, 35J96, 65N25
\end{abstract}

\section{Introduction}
The (real) \emph{Monge-Amp{\`e}re eigenvalue} (MAE) problem \cite{abedin2020inverse,le2017eigenvalue,le2020convergence,lions1985two,Tso1990real} considers finding the eigen-pairs $(\lambda,u)$  such that 
\begin{equation}
\label{eq_MAeig}
\begin{cases}
\det D^2u =\lambda |u|^d &
\mbox{ in } \Omega,\\
u=0 &  
\mbox{ on } \partial \Omega,
\end{cases}
\end{equation}
where $\Omega\subset \mathbb{R}^d$ $(d \ge 2)$ is an open and bounded convex domain with $\partial\Omega$ being the boundary, the Monge-Amp{\`e}re eigenfunction $u:\Omega\cup \partial\Omega\to\mathbb{R}$ is convex, the MAE $\lambda\in \mathbb{R}$ is a positive number, and $\det D^2$ denotes the Monge-Amp{\`e}re operator with the determinant $\det$ and the Hessian $D^2$. 
The existence and the regularity of the eigen-pairs for the MAE problem \eqref{eq_MAeig} have been guaranteed when  $\Omega\subset\mathbb{R}^d$ ($d\ge 2$) is smooth and uniformly convex \cite{lions1985two}. 
It was later considered by Tso \cite{Tso1990real} from a variational perspective. More recently, these results were extended by Le \cite{le2017eigenvalue} to the case of general bounded convex open domains.  In addition, Le \cite{le2025global} showed that the Monge-Amp{\`e}re  eigenfunctions  are globally Lipschitz and established global $W^{2,1}$ estimates for the Monge-Amp{\`e}re  eigenfunctions.

Beyond the real MAE problem \eqref{eq_MAeig}, Lions \cite{lions1985two} suggested that his method might apply to the complex Monge-Amp{\`e}re operator. 
Moreover, Badiane et al. \cite{badiane2023eigenvalue} established the existence and uniqueness of the first eigenpair for the complex Monge-Amp{\`e}re operator on bounded strongly pseudoconvex domains, following the strategy of Lions in the real case.  
Subsequently, Zeriahi \cite{zeriahi2025iterative} developed an effective iterative approximation scheme for approximating both the eigenvalue and eigenfunction without prior knowledge of the first eigenvalue. 
Later, Lu et al. \cite{lu2025new} proposed a new iterative approach in bounded hyperconvex domains, establishing uniqueness in the finite energy class introduced by Cegrell \cite{cegrell1998pluricomple} and a Rayleigh-type characterization of the eigenvalue. 
These developments constitute a parallel theoretical development to the real MAE problem.

Based on a prior acknowledgment of the eigenvalue $\lambda$, the MAE problem \eqref{eq_MAeig} can be transformed to solve the corresponding Monge-Amp{\`e}re (MA) equation, which takes the classical form \cite{benamou2010two,dean2003numerical,dean2004numerical,dean2006numerical,feng2009mixed,froese2011convergent,froese2011fast,glowinski2019finite,liu2019finite} given by
\begin{equation}
\label{eq:MA}
\det D^2u = f \quad \mbox{ in } \Omega,\
\end{equation}
where $f$ is a given function on $\Omega$. 
The MA equation \eqref{eq:MA} and related problems, 
especially the MAE problem \eqref{eq_MAeig}, have been considered fundamental models in various geometric problems and applied fields, which are not only theoretically significant but also have profound implications in practical areas \cite{benamou2000computational,engquist2016optimal,haker2004optimal,stojanovic2004optimal}. 
The MAE problem can be regarded as a nonlinear analogue of classical eigenvalue problems and a natural extension of the MA equation, playing a central role in nonlinear partial differential equations.
For instance, in the prescribed Gaussian curvature \cite{bakelman1994convex,kazdan1985prescribing}, the MAE $\lambda$ was used to determine the extinction rate of non-parametric surfaces evolving by the $n$-th root of their Gaussian curvature \cite{li2017nonparametric,oliker1991evolution}.  
Recently, for the unit ball domain, an explicit bound on the MAE has been established by Le \cite{le2025large}. 
More generally, the study and computation of Monge-Amp{\`e}re eigenpairs $(\lambda, u)$ are not only closely linked to fundamental inequalities, including the Brunn-Minkowski, isoperimetric, and reverse isoperimetric inequalities \cite{brandolini2009new,hartenstine2009brunn,le2017eigenvalue,salani2005brunn}, but also reveal intrinsic structures of fully nonlinear elliptic equations. For more background and details on MA equations and related MAE problems, we refer to the monograph \cite{le2024analysis}. 

Due to the highly nonlinear nature of the MAE problem, efficiently solving it numerically poses significant challenges. 
Therefore, special discretization methods need to be designed to handle the determinant structure of the Monge-Amp{\`e}re operator. 
Although various numerical schemes have been developed for the MA equation, efficient and reliable methods specifically tailored for MAE problems remain far from well established. 
To address these challenges and build on extensive experiences with the MA equation, recent research has focused on developing effective numerical methods for both \eqref{eq:MA} and \eqref{eq_MAeig}. 
In particular, the MA equation \eqref{eq:MA} has been formulated as an optimization problem in \cite{caboussat2018least,caboussat2013least,dean2003numerical,dean2004numerical}, leading to augmented Lagrangian and least-squares methods. 
Moreover, finite difference, finite element, and operator-splitting schemes have been introduced for the MA equation \eqref{eq:MA} in \cite{awanou2015standard,feng2009mixed,froese2011convergent,froese2011fast,glowinski2019finite,imen2024convergent,liu2019finite}; see the references therein for details.

For the MAE problem, Glowinski et al.  \cite{glowinski2020numerical} took advantage of an equivalent divergence formulation of the MA equation and reformulated the MAE problem \eqref{eq_MAeig} as
\begin{equation*}
\begin{cases}
-\nabla \cdot({\rm cof}(D^2u)\nabla u) =d\lambda u |u|^{d-1}  &
\mbox{ in } \Omega,\\
u=0 &  
\mbox{ on } \partial \Omega,
\end{cases}
\end{equation*}
where ${\rm cof}(D^2u)$ is the cofactor matrix of $D^2u$. Then they treated the MAE problem \eqref{eq_MAeig} as the optimality condition of a constrained optimization problem, and proposed an operator-splitting method to solve it. 
Around the same time, Abedin and Kitagawa \cite{abedin2020inverse} computed the eigen-pairs $(\lambda, u)$ by introducing an iterative method (named the Abedin-Kitagawa iteration (AKI) method for ease of reference) with a sequence of subproblems taking the form of the MA equation. 
Specifically, the AKI method is given by handling the following iterative problem:
\begin{equation}
\label{eq:MAsub}
\begin{cases}
\det D^2u_{k+1} =R(u_k)|u_k|^d &  \mbox{ in } \Omega,\\
u_{k+1}=0 &\mbox{ on } \partial \Omega, 
\end{cases}
\qquad  k=0,1,\ldots, 
\end{equation}
where $u_{k+1}$ is an unknown convex Aleksandrov solution (c.f. Definition \ref{definitiona: Aleksandrov}) based on the computed $u_{k}$ from the previous iteration, 
and 
$R(\cdot)$ is the Rayleigh quotient defined by 
\begin{equation}
\label{rayli} 
R(u):= \frac{\int_{\Omega}|u|  \det D^2 u\ \mathrm{d} \bm{x} }{\int_{\Omega}|u|^{d+1} \, \mathrm{d} \bm{x}},
\end{equation}
with $\det D^2 u\, \mathrm{d} \bm{x}$ denoting the Monge-Amp{\`e}re measure associated with the given convex function $u$ on $\Omega$ (c.f. Section \ref{sec:notaions} for more details).
In particular, under the assumptions for the initial convex function $u_0 \in {\mathcal{C}}(\overline{\Omega})$ that (i) $u_0\le 0$ on $\partial \Omega$, (ii) $R(u_0) < \infty$,
and (iii) $\det D^2 u_0\ge c_0$ in $\Omega$ with some constant $c_0\ge 0$, the convergence analysis of the AKI method was established in \cite{abedin2020inverse}. 
Moreover, improvements in the convergence analysis of the AKI method were achieved in \cite{le2020convergence} by replacing the above conditions (i)-(iii) with the condition that $0<R(u_0) < \infty$.  
Building on \cite{abedin2020inverse} and \cite{glowinski2020numerical}, Liu et al. \cite{liu2022efficient} proposed to solve the MAE problem \eqref{eq_MAeig} by utilizing the first-order operator-splitting method in solving the subproblem \eqref{eq:MAsub} of the AKI method.

From the theoretical perspective of the AKI method, an exact Aleksandrov solution should be obtained for the subproblem \eqref{eq:MAsub} at the $k$-th iteration to ensure convergence, but this is generally not achievable in practice due to the nonlinearity of the Monge-Amp{\`e}re subproblem \eqref{eq:MAsub}, making it highly challenging to obtain exact Aleksandrov solutions in numerical implementations. 
Moreover, from a numerical perspective, the subproblems in the form of \eqref{eq:MAsub} are typically solved using iterative methods, such as operator-splitting algorithms.
Thus, these require approximate Aleksandrov solutions to all subproblems with high accuracy, which may make the AKI method very slow. 
Therefore, how to verify whether an approximate Aleksandrov solution to the subproblem \eqref{eq:MAsub} can be accepted without violating the convergence of the AKI method constitutes a crucial issue for the efficient implementation of the AKI iteration method.  
 
To properly address the problems within the above exposition, in this paper, we propose an inexact AKI method for solving the MAE problem \eqref{eq_MAeig}, 
aiming at enhancing the computational efficiency, yet not spoiling the convergence.
We achieve this by introducing an error function for the inexact computations of the subproblems that satisfies certain error tolerance criteria controlled by a summable sequence of nonnegative real numbers. This provides a flexible framework for managing numerical errors, particularly when iterative methods are employed to solve the subproblems. 
Moreover, we prove that the proposed inexact AKI method admits the same convergence properties as those of the original AKI method in \cite{abedin2020inverse,le2020convergence}. 
Motivated by the recent advances developed by Benamou et al. \cite{benamou2010two,froese2011convergent} for solving the MA equation in dimensions two and higher with the Dirichlet boundary condition, we solve the subproblems in the inexact AKI method by reformulating them into a fixed-point problem for MAE problems in two and three dimensions.
Furthermore, under the $\mathcal{C}^{2,\alpha}$ boundary condition, we provide a rigorous convergence analysis of the proposed fixed-point method for two- and three-dimensional (2D and 3D) cases.
Finally, extensive numerical experiments are performed to solve both 2D and 3D MAE problems using the proposed inexact fixed-point-based AKI method. The performance of the proposed method is compared with the original AKI method (where the subproblems are solved with sufficiently high accuracy, regarded as solved exactly) and the operator-splitting approach proposed in \cite{liu2022efficient}. 
The numerical results suggest that the proposed algorithm in this paper can significantly reduce computational time (several times faster while maintaining the same precision) compared to the existing methods, achieving much higher accuracy than the operator-splitting approach. 

The remaining parts of this paper are organized as follows. 
In Section \ref{sec:notaions}, we provide the notation and preliminaries used throughout this paper. 
Then, the inexact AKI method is proposed in Section \ref{sec:An Inexact AKI method} to solve the MAE problem \eqref{eq_MAeig}, and two inexact frameworks are analyzed with their corresponding convergence results. 
In addition, we present a fixed-point iteration to solve the corresponding subproblem and establish its convergence under the $\mathcal{C}^{2,\alpha}$ boundary condition for 2D and 3D cases in Section \ref{sec:fixed}.
Numerical experiments and results are presented in Section \ref{sec:experiments} to validate the effectiveness and efficiency of the proposed algorithm. 
Finally, we conclude this paper with some discussion in Section \ref{sec:conclusions}.

\section{Notation and preliminaries}
\label{sec:notaions}
 This section presents the notation used throughout this paper and some preliminary results related to the MA equation \eqref{eq:MA}, following from two monographs  \cite{figalli2017monge,gutierrez2016monge} and two papers \cite{abedin2020inverse,hartenstine2006dirichlet}. Additionally, we will present some results related to the MAE problem \eqref{eq_MAeig}, following the work in \cite{le2017eigenvalue}.

Let $\Omega$ be an open convex domain contained in $\mathbb{R}^{d}$ with $\partial\Omega$ being its boundary and $\overline\Omega:=\Omega\cup\partial\Omega$. 
Unless otherwise specified, bounded open convex domains are assumed to have non-empty interiors throughout this paper. 
For $n \in \mathbb{N}_0: = \{0,1,2,\ldots\}$ and $\alpha\in(0,1]$, the domain $\Omega$ is said to have a $\mathcal{C}^{n,\alpha}$ boundary if, for every point on the boundary of the domain, the boundary hypersurface can be realized, after an appropriate rotation of coordinates, as a $\mathcal{C}^{n,\alpha}$ function.
In addition, denote $\mathcal{C}^{n,\alpha}(\overline{\Omega})$ $ (\mathcal{C}^{n,\alpha}(\Omega))$ as the H{\"o}lder space consisting of all functions that have continuous derivatives up to order $n$ and whose $n$-th partial derivatives are uniformly (locally) H{\"o}lder continuous with exponent $\alpha$ in $\overline{\Omega} $ $(\Omega)$. For any $u \in \mathcal{C}^{n,\alpha}(\overline{\Omega}) $, the H\"older $\mathcal{C}^{n,\alpha}$-norm of $u$ is equipped with
$$
    \|u\|_{\mathcal{C}^{n,\alpha}(\overline{\Omega})}: =\| u\|_{\mathcal{C}^{n}(\overline{\Omega})}+|u|_{n,\alpha;\Omega},
$$
where 
$\| u\|_{\mathcal{C}^{n}(\overline{\Omega})} := \sum\limits_{|\beta|\le n}\sup\limits_{\bm{x}\in \overline{\Omega}}|\partial^{\beta}u(\bm{x})|$ 
and $|u|_{n,\alpha;\Omega}$ is the H{\"o}lder semi-norm defined by
$$
\begin{array}{ll}
|u|_{n,\alpha;\overline{\Omega}}:=\sum\limits_{|\beta|=n}\sup\limits_{\substack{\bm{x}, \bm{y} \in \overline{\Omega}\\ \bm{x} \neq \bm{y}}}\dfrac{|\partial^\beta u(\bm{x})-\partial^\beta u(\bm{y})|}{\|\bm{x}-\bm{y}\|^\alpha}.
\end{array}
$$
Here, $\beta = (\beta_1,\ldots,\beta_d)$ denotes a multi-index with $\beta_i\in \mathbb{N}_0$, and $\partial^{\beta}$ denotes the corresponding partial derivative.
The space $\mathcal{C}^{n,\alpha}_{\rm loc}(\Omega)$ denotes functions that satisfy the $\mathcal{C}^{n,\alpha}$-regularity locally in $\Omega$, i.e., for every compact subset $K \subset \Omega$, the function restricted to $K$ belongs to $\mathcal{C}^{n,\alpha}(K)$.
The subdifferential mapping of a convex function  $u:\Omega \to \mathbb{R}$ is defined by
$$
\partial u (\bm{x}):=\left\{\bm{x}^* \in \mathbb{R}^{d} \mid u(\bm{z})  \ge u(\bm{x})+ \langle \bm{x}^*, \bm{z}-\bm{x} \rangle \quad \forall\; \bm{z}\in \Omega
\right\},\quad \bm{x}\in \Omega.
$$

The Monge-Amp{\`e}re measure and the Aleksandrov solution of convex functions are given in the following definitions. 
\begin{definition}[{Monge-Amp{\`e}re measure; \cite[Definition 2.1]{figalli2017monge}}]
 Let $u:\Omega\rightarrow \mathbb{R}$ be a convex function, the Monge-Amp{\`e}re measure associated to  $u$ is defined by
$$
\mathcal{M}u(E) := \mathcal{L}^d(\partial u(E)),
$$
where $\partial u(E) =\bigcup \limits_{\bm{x}\in E}\partial u(\bm{x})$ for each Borel set $E\subset\Omega$ and $\mathcal{L}^d$ denotes  $d$-dimensional  Lebesgue measure. 
\end{definition} 

\begin{definition}[{Aleksandrov solutions; \cite[Definition 2.5]{figalli2017monge}}]
\label{definitiona: Aleksandrov}
Given a convex open set $\Omega$ and a Borel measure  $\nu$ on $\Omega$, 
let $u$ : $\Omega \to \mathbb{R}$ be a convex function and ${\mathcal M}u$ be the Monge-Amp{\`e}re measure of $u$, we say that $u$ is an Aleksandrov solution of the equation
$$
\det D^2u=\nu,
$$
if $\nu={\mathcal M}u$.  
When $\nu = f \ \mathrm{d} \bm{x}$ we will say for simplicity that $u$ solves 
$$
\det D^2u=f.
$$
\end{definition}

Note that, if $u \in \mathcal{C}^{2}(\Omega) $, the change of variable formula gives
$$
\mathcal{L}^d( \partial u(E)) = \mathcal{L}^d(\nabla u(E)) = \int_{E} \det D^{2}u \ \mathrm{d} \bm{x}\quad     \quad {\rm for \ each\ Borel\  set}\  E \subset \Omega.
$$
Thus one has $\mathcal{M}u=\det D^{2}u(\bm{x})\  \mathrm{d} \bm{x}$, immediately.  
Throughout this paper, by slightly abusing the notation, we denote the Monge-Amp{\`e}re measure  $\mathcal{M}u$ of a general convex function $u$ by $\det D^2 u$. For each Borel set 
$E\subset\Omega$, one has 
$$\int_E \det D^2 u\ \mathrm{d} \bm{x}=\mathcal{M}u(E)
\quad\mbox{and}
\quad \int_E |u|\det D^2 u\ \mathrm{d} \bm{x}=\int_E |u|\ \mathrm{d} \mathcal{M}u.
$$

The existence and uniqueness of the Aleksandrov solution to the Dirichlet problem for the MA equation under general conditions are guaranteed by the following fundamental result.
\begin{lemma}[{Solvability of Dirichlet problem; \cite[Theorem 1.1]{hartenstine2006dirichlet}}]
\label{solvability}
Let $\Omega$ be a bounded convex open domain in $\mathbb{R}^{d}$ and $\nu$ be a nonnegative Borel measure in $\Omega$, 
then there exists a unique convex function $u\in \mathcal{C}(\overline{\Omega})$ constituting an Aleksandrov solution of the Dirichlet problem
\begin{equation*}
\begin{cases}
\det D^2u =\nu  & \text{ in }  \Omega,\\
u=0 & \text{ on } \partial \Omega.
\end{cases}
\end{equation*}
\end{lemma}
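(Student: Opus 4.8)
\medskip
\noindent\emph{Proof strategy.}
I would prove the two assertions separately. \emph{Uniqueness} is immediate from the comparison principle for Aleksandrov solutions (see \cite{figalli2017monge,gutierrez2001monge}): if $u_1,u_2\in\mathcal{C}(\overline{\Omega})$ are convex with $\mathcal{M}u_1=\nu=\mathcal{M}u_2$ in $\Omega$ and $u_1=u_2=0$ on $\partial\Omega$, then applying the comparison principle twice, with $u_1$ and $u_2$ exchanging roles, gives $u_1\le u_2$ and $u_2\le u_1$, hence $u_1=u_2$. For \emph{existence}, since the quantitative Aleksandrov estimate used below carries the total mass $\nu(\Omega)$ as a factor, I would first treat a finite $\nu$ — which is anyway the only case needed for the subproblems \eqref{eq:MAsub}, where $\nu=R(u_k)|u_k|^{d}\,\mathrm{d}\bm{x}$ — and reach the general case by exhaustion.

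For existence with $\nu(\Omega)<\infty$, the plan is to reduce to the classical strictly convex case and pass to a monotone limit. Recall first that for a bounded, strictly convex domain $D$ and a finite nonnegative Borel measure $\mu$, the Dirichlet problem $\det D^2 w=\mu$ in $D$, $w=0$ on $\partial D$, admits a unique Aleksandrov solution $w\in\mathcal{C}(\overline{D})$; this classical fact is itself obtained by weakly-$*$ approximating $\mu$ by atomic measures (with piecewise-linear convex solutions), using the Aleksandrov maximum principle for uniform bounds and uniform boundary decay, and invoking the weak continuity of the Monge--Amp{\`e}re operator to pass to the limit. Now, given a general bounded convex $\Omega$, I would exhaust it from inside by strictly convex domains $\Omega^{(j)}\uparrow\Omega$ and let $u_j\in\mathcal{C}(\overline{\Omega^{(j)}})$ solve $\det D^2 u_j=\nu|_{\Omega^{(j)}}$ with $u_j=0$ on $\partial\Omega^{(j)}$. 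The comparison principle makes $(u_j)$ nonincreasing on overlaps, and the Aleksandrov maximum principle on $\Omega^{(j)}$ gives
\[
|u_j(\bm{x})|^{d}\ \le\ C_d\,(\diam\Omega)^{d-1}\,\dist(\bm{x},\partial\Omega^{(j)})\,\nu(\Omega),\qquad \bm{x}\in\Omega^{(j)}.
\]
Hence $u:=\lim_j u_j$ is a bounded convex function on $\Omega$; letting $j\to\infty$ in the displayed bound and using $\dist(\bm{x},\partial\Omega^{(j)})\to\dist(\bm{x},\partial\Omega)$ yields $|u(\bm{x})|\le C\,\dist(\bm{x},\partial\Omega)^{1/d}$, so $u$ extends to $\overline{\Omega}$ continuously with $u=0$ on $\partial\Omega$. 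Finally, the monotone (hence locally uniform) convergence $u_j\to u$ together with the weak continuity of the Monge--Amp{\`e}re operator gives $\mathcal{M}u_j\rightharpoonup\mathcal{M}u$ in $\Omega$; since $\mathcal{M}u_j=\nu|_{\Omega^{(j)}}\uparrow\nu$, this forces $\mathcal{M}u=\nu$, so $u$ is the required solution. A general (possibly infinite) $\nu$ is reached by a further cutoff $\nu|_{E_j}$ with $\nu(E_j)<\infty$ and $E_j\uparrow\Omega$.

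The step I expect to be the main obstacle is the continuous attainment of the zero boundary data, which is precisely where a general convex $\Omega$ is more delicate than a strictly convex one: if $\partial\Omega$ has flat faces, the usual barrier — a ball circumscribing $\Omega$ that touches $\partial\Omega$ at a single point — is unavailable, so boundary attainment cannot be argued directly on $\Omega$. The exhaustion by strictly convex $\Omega^{(j)}$ is designed to outsource the boundary argument to domains where such barriers exist; the real work is then (a) to have the strictly convex base case securely in hand, and (b) to verify that the uniform-up-to-the-boundary control survives the limit $j\to\infty$, so that $u\in\mathcal{C}(\overline{\Omega})$, $u=0$ on $\partial\Omega$, and no Monge--Amp{\`e}re mass escapes to $\partial\Omega$. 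Carrying out (b) rigorously is the technical heart of the argument.
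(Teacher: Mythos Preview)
The paper does not prove this lemma at all: it is stated as a quotation of \cite[Theorem 1.1]{hartenstine2006dirichlet} and used as a black box, with no argument supplied. So there is no ``paper's own proof'' to compare against.

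That said, your sketch is essentially the strategy of the cited reference: uniqueness from the comparison principle, and existence by exhausting a general bounded convex $\Omega$ from within by strictly convex subdomains, solving the Dirichlet problem on each via the classical Aleksandrov construction, and passing to a monotone limit with the Aleksandrov maximum principle controlling the boundary behaviour. Your identification of the key difficulty --- continuous attainment of the zero boundary values when $\partial\Omega$ may contain flat faces --- is exactly the point of Hartenstine's paper, and your proposed resolution via the uniform estimate $|u_j(\bm{x})|^d \le C_d(\diam\Omega)^{d-1}\dist(\bm{x},\partial\Omega^{(j)})\,\nu(\Omega)$ surviving the limit is the right mechanism.

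One small caveat: the lemma as stated in the paper omits the hypothesis $\nu(\Omega)<\infty$, which Hartenstine does assume and which your Aleksandrov-principle bound requires. Your closing remark about reaching infinite $\nu$ by a further cutoff $\nu|_{E_j}\uparrow\nu$ would need more care --- the limiting $u$ could fail to be bounded or to attain boundary values continuously --- but this is a gap in the paper's statement, not in your argument, and in any case only the finite-mass situation is used downstream.
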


The following Aleksandrov maximum principle, the comparison principle, and the continuity property of the Monge-Amp{\`e}re energy are important in our convergence analysis. We  write $\mathcal{M}u \ge \nu$ in $\Omega$ (resp. $\mathcal{M}u \le \nu$ in $\Omega$) 
if $\mathcal{M}u(E) \ge \nu(E)$ (resp. $\mathcal{M}u(E) \le \nu(E)$) for all Borel sets 
$E \subset \Omega$.
\begin{lemma}[{Aleksandrov maximum principle; \cite[Theorem 1.4.2]{gutierrez2016monge}}]
\label{Aleksandrov maximum principle}
Let  $\Omega$ be a bounded convex open domain in $\mathbb{R}^{d}$.
 Suppose $u\in \mathcal{C}(\overline{\Omega})$ is convex and vanishes on  $\partial \Omega$, then there exists a constant $C_d > 0$ depending
only on the dimension $d$ such that
$$
|u(\bm{x})|^d \le C_d \diam(\Omega)^{d-1} 
\dist(\bm{x},\partial\Omega)\int_\Omega \det D^2 u\ \mathrm{d} \bm{x}   \quad \forall\; \bm{x} \in\Omega,
$$
where $\diam(\Omega)$ is the radius of $\Omega$.
\end{lemma}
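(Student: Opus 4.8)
The plan is to run the classical cone‑comparison argument. Fix an arbitrary $\bm{x}_0\in\Omega$ and set $h:=|u(\bm{x}_0)|=-u(\bm{x}_0)$; here $h\ge 0$ because a convex function vanishing on $\partial\Omega$ is nonpositive on $\Omega$, as every interior point lies on a chord joining two boundary points. Put $\delta:=\dist(\bm{x}_0,\partial\Omega)$ and pick a nearest boundary point $\bm{z}_0\in\partial\Omega$. Since the open ball $B(\bm{x}_0,\delta)$ is contained in $\Omega$, any supporting hyperplane of $\Omega$ at $\bm{z}_0$ is tangent to this ball, so $\bm{z}_0=\bm{x}_0+\delta\bm{\nu}$ with $\bm{\nu}$ the associated outer unit normal, and $\langle\bm{\nu},\bm{z}-\bm{x}_0\rangle\le\delta$ for all $\bm{z}\in\overline{\Omega}$.

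Next I would introduce the cone function $v$ with apex $(\bm{x}_0,u(\bm{x}_0))$ over the base $\overline{\Omega}$, i.e.\ the convex function that vanishes on $\partial\Omega$, equals $u(\bm{x}_0)$ at $\bm{x}_0$, and is affine along every ray emanating from $\bm{x}_0$. Reading off the definition of the subdifferential one gets
\[
\partial v(\bm{x}_0)=\bigl\{\bm{p}\in\mathbb{R}^{d}:\ \langle\bm{p},\bm{z}-\bm{x}_0\rangle\le h\ \ \text{for all }\bm{z}\in\overline{\Omega}\bigr\}.
\]
The crucial step is the inclusion $\partial v(\bm{x}_0)\subseteq\partial u(\Omega)$. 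To establish it, I would take $\bm{p}\in\partial v(\bm{x}_0)$, form the affine map $\ell(\bm{z}):=u(\bm{x}_0)+\langle\bm{p},\bm{z}-\bm{x}_0\rangle$, and note that since $\bm{p}$ supports $v$ at $\bm{x}_0$ and $v\le 0$ on $\overline{\Omega}$, we have $\ell\le v\le 0=u$ on $\partial\Omega$ while $\ell(\bm{x}_0)=u(\bm{x}_0)$. Hence $s:=\max_{\overline{\Omega}}(\ell-u)\ge 0$ and the maximum is attained at some $\bm{y}\in\Omega$ (if $s>0$ it cannot be attained on $\partial\Omega$, where $\ell-u\le 0$; if $s=0$ take $\bm{y}=\bm{x}_0$), so $\ell-s$ supports $u$ at $\bm{y}$, that is, $\bm{p}\in\partial u(\bm{y})\subseteq\partial u(\Omega)$. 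With the convention $\int_{\Omega}\det D^2u\ \mathrm{d}\bm{x}=\mathcal{M}u(\Omega)=|\partial u(\Omega)|$ this gives $|\partial v(\bm{x}_0)|\le\int_{\Omega}\det D^2u\ \mathrm{d}\bm{x}$.

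It then remains to bound $|\partial v(\bm{x}_0)|$ from below. Since $\langle\bm{p},\bm{z}-\bm{x}_0\rangle\le|\bm{p}|\,\diam(\Omega)$ on $\overline{\Omega}$, the set $\partial v(\bm{x}_0)$ contains the $(d-1)$-dimensional ball $\{\bm{p}\perp\bm{\nu}:\ |\bm{p}|\le h/\diam(\Omega)\}$, and by the choice of $\bm{z}_0$ it also contains the point $(h/\delta)\bm{\nu}$; by convexity it therefore contains the cone with that ball as base and that point as apex, whose Lebesgue measure equals $c_d\,h^{d}/\bigl(\diam(\Omega)^{d-1}\delta\bigr)$, where $c_d>0$ is $1/d$ times the volume of the unit ball in $\mathbb{R}^{d-1}$. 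Combining the two estimates yields
\[
|u(\bm{x}_0)|^{d}=h^{d}\le C_d\,\diam(\Omega)^{d-1}\dist(\bm{x}_0,\partial\Omega)\int_{\Omega}\det D^2u\ \mathrm{d}\bm{x},\qquad C_d:=1/c_d,
\]
and since $\bm{x}_0\in\Omega$ is arbitrary, this is the assertion.

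The one place that needs genuine care — and the main potential obstacle — is the inclusion $\partial v(\bm{x}_0)\subseteq\partial u(\Omega)$: one must ensure that when the supporting hyperplane of the cone is slid until it first touches the graph of $u$, the contact occurs at an interior point of $\Omega$, which is precisely what the sign bookkeeping for $\ell-u$ (nonpositive on $\partial\Omega$, zero at $\bm{x}_0$) guarantees. The geometric lower bound on $|\partial v(\bm{x}_0)|$ is elementary, and all occurring constants are harmless dimensional factors.
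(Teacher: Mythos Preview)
The paper does not supply a proof of this lemma; it simply quotes it from Guti\'errez \cite[Theorem 1.4.2]{gutierrez2001monge}. Your argument is precisely the classical cone--comparison proof found there: build the cone $v$ over $\overline{\Omega}$ with apex $(\bm{x}_0,u(\bm{x}_0))$, show $\partial v(\bm{x}_0)\subseteq\partial u(\Omega)$ by sliding supporting hyperplanes, and estimate $|\partial v(\bm{x}_0)|$ from below via the explicit cone contained in it. All steps are correct, including the point you flag as delicate (that the first contact of the slid hyperplane with the graph of $u$ occurs at an interior point, guaranteed by $\ell-u\le 0$ on $\partial\Omega$ and $=0$ at $\bm{x}_0$). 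So your proposal is sound and matches the standard source the paper defers to.
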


\begin{lemma}[{Comparison principle; \cite[Theorem 1.4.6]{gutierrez2016monge}}]
\label{Comparison Principle}
Let  $\Omega$ be a bounded convex open domain in $\mathbb{R}^{d}$. Suppose $u, v \in \mathcal{C}(\overline{\Omega})$ are convex functions satisfying $u \ge v$ on $\partial\Omega$ and
$$ 
\mathcal{M}u \le \mathcal{M}v.
$$
Then $u \ge v$  in $\Omega$.
\end{lemma}

\begin{lemma}[{Continuity property of the Monge-Amp{\`e}re energy; \cite[Lemma 2.9]{abedin2020inverse}}]
\label{Continuity property}
Let  $\Omega$ be a bounded convex open domain in $\mathbb{R}^{d}$. Suppose that $v_k \in \mathcal{C}(\overline{\Omega})$ are convex functions converging uniformly on $\overline{\Omega}$ 
to a function $v$, and there exists a constant $b > 0$ such that 
$\mathcal{M}v_k \le b \mathcal{L}^d$ for all $k \ge 0$. 
Then
\[
\lim_{k \to \infty} I(v_k) = I(v),
\]
where
\[
I(u):= \int_\Omega |u|\det D^2 u\ \mathrm{d} \bm{x}.
\]
\end{lemma}
The next Lemma shows that if $u \in \mathcal{C}(\overline{\Omega})$ is convex and vanishes on $\partial \Omega$, then all $L^p$ norms of $u$ are comparable. This lemma plays an important role in establishing the upper bound of $R(u_k)$ for all $k\ge1$.

\begin{lemma}[{Comparability of $L^p$ norms; \cite[Lemma 2.10]{abedin2020inverse}}]
\label{Comparability}
If $u \in \mathcal{C}(\overline{\Omega})$ is convex and vanishes on $\partial \Omega$, then for all $p \ge 1$, 
\[
\frac{\|u\|_{L^{\infty}(\Omega)}}{d+1}
    \le \Big( \dfrac{1}{\mathcal{L}^d(\Omega)} \int_{\Omega} |u|^p \, d\bm{x} \Big)^{\!{1}/{p}}
    \le \|u\|_{L^{\infty}(\Omega)}.
\]
\end{lemma}

At last, we focus on the MAE problem \eqref{eq_MAeig}, for which one can define the constant
\begin{equation}
\label{inf}
\lambda_{\rm MA}:=\inf\limits_{w\in \mathcal{K}} R(w),
\end{equation}
 where  $R$ is the Rayleigh quotient defined by \eqref{rayli}, and 
$$
\mathcal{K} = \{  u \in \mathcal{C}(\overline{\Omega})\mid  u \text{ is convex and non-zero in } \Omega, \ u = 0 \text{ on } \partial \Omega \}.
$$
The constant $\lambda_{\rm MA}$ is called the MAE of $\Omega$. 
If the infimum in \eqref{inf} is attained at a certain function $w$, it is called the corresponding Monge-Amp\`ere eigenfunction.
Moreover, one has the following result regarding the existence and uniqueness of the solution to the MAE problem \eqref{eq_MAeig}, which establishes the mathematical justification for connecting the subproblems \eqref{eq:MAsub} and the MAE problem \eqref{eq_MAeig}.

\begin{lemma}[Existence and uniqueness of solution; {\cite[Theorem 1.1]{le2017eigenvalue}}]
\label{existence and uniqueness}
Let $\Omega$ be a bounded open convex domain in $\mathbb{R}^{d}$. 
Then the following assertions hold:
\begin{enumerate}
\item[\bf (\romannumeral1)]
The infimum in \eqref{inf} is achieved by a non-zero convex solution $w\in \mathcal{C}^{0,\beta}(\overline{\Omega})\cap \mathcal{C}^{\infty}(\Omega)$ for all $\beta\in (0, 1)$ to the MAE problem \eqref{eq_MAeig}.
 
\item [\bf (\romannumeral2)]
If the pair $(\Lambda, \tilde w)$ 
satisfies $\det D^2 \tilde w =\Lambda |\tilde w|^d$ in $\Omega$, where $\Lambda>0$ is a positive constant and $\tilde w\in \mathcal{K}$, 
then $\Lambda=\lambda_{\rm MA}$ and $\tilde w=m w$ for some positive constant $m$.
\end{enumerate}
\end{lemma}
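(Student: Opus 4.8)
The plan is to establish part (i) by the direct method of the calculus of variations applied to the Rayleigh quotient $R$ in \eqref{rayli}, and part (ii) by feeding the extra eigenpair into the reverse Aleksandrov estimate (Lemma~\ref{Reverse Aleksandrov estimate}) and then invoking a strong-maximum-principle comparison for the Monge--Amp\`ere operator. I would start by recording the scale invariance $R(tu)=R(u)$ for every $t>0$: since $\det D^2(tu)=t^d\det D^2u$, both the numerator and the denominator in \eqref{rayli} are $(d+1)$-homogeneous, so we are free to normalize. Testing $R$ at the Aleksandrov solution of $\det D^2u_0=1$, $u_0=0$ on $\partial\Omega$ (which exists by the Dirichlet solvability result \cite{hartenstine2006dirichlet}) gives $\lambda_{\rm MA}<\infty$, while positivity of $\lambda_{\rm MA}$ will follow a posteriori once the minimizer is shown to be strictly negative in $\Omega$.

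For existence, pick $\{w_j\}\subset\mathcal{K}$ with $R(w_j)\to\lambda_{\rm MA}$, rescaled so that $\int_\Omega|w_j|^{d+1}\,\mathrm{d}\bm{x}=1$; then $\int_\Omega|w_j|\det D^2w_j\,\mathrm{d}\bm{x}\to\lambda_{\rm MA}$. A uniform bound $\|w_j\|_{L^\infty(\Omega)}\le C(\Omega,d)$ holds because a convex function vanishing on $\partial\Omega$ with minimum $-m$ dominates in absolute value, on a dilate of $\Omega$ of fixed proportion, the cone of height $m$, so $\int_\Omega|w_j|^{d+1}\,\mathrm{d}\bm{x}\ge c(d)|\Omega|\,m^{d+1}$ and the normalization bounds $m$. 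Convexity and this $L^\infty$ bound yield local equi-Lipschitz estimates, hence a subsequence converging locally uniformly in $\Omega$ to a convex function $w$ with $-C\le w\le 0$; using the Aleksandrov maximum principle (Lemma~\ref{Aleksandrov maximum principle}) to control the Monge--Amp\`ere masses $\mathcal{M}w_j(\Omega)$ along the minimizing sequence, one upgrades this to uniform convergence on $\overline{\Omega}$, so that $w\in\mathcal{C}(\overline{\Omega})$, $w=0$ on $\partial\Omega$, and $\int_\Omega|w|^{d+1}\,\mathrm{d}\bm{x}=1$, i.e. $w\in\mathcal{K}$. The classical weak-$*$ convergence $\det D^2w_j\rightharpoonup\det D^2w$ under local uniform convergence of convex functions (see \cite{gutierrez2001monge}), together with the uniform convergence $|w_j|\to|w|$ and an exhaustion of $\Omega$ by compacts on which the masses $\mathcal{M}w_j$ are uniformly bounded, then gives $\liminf_j\int_\Omega|w_j|\det D^2w_j\,\mathrm{d}\bm{x}\ge\int_\Omega|w|\det D^2w\,\mathrm{d}\bm{x}$, whence $\lambda_{\rm MA}\ge R(w)\ge\lambda_{\rm MA}$ and the infimum in \eqref{inf} is attained at $w$.

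I expect the \emph{main obstacle} to be the derivation of the Euler--Lagrange equation $\det D^2w=\lambda_{\rm MA}|w|^d$ from minimality, since $\mathcal{K}$ is only a convex cone of convex functions, so admissible perturbations are one-sided, and $u\mapsto\det D^2u$ is nonlinear. The way I would handle this is to first use the interior regularity theory for the Monge--Amp\`ere equation to show that $w$ is strictly convex and $\mathcal{C}^\infty$ in $\Omega$ (so $w<0$ there, which yields $\lambda_{\rm MA}=R(w)>0$, and so the linearized operator $L\phi:={\rm cof}(D^2w):D^2\phi$ is uniformly elliptic on compact subsets); then, for $\phi\in\mathcal{C}^\infty_c(\Omega)$, strict convexity keeps $w+t\phi$ convex on ${\rm supp}\,\phi$ for small $|t|$, and evaluating $\frac{\mathrm{d}}{\mathrm{d}t}\big|_{t=0}\int_\Omega|w|\det D^2(w+t\phi)\,\mathrm{d}\bm{x}$ and $\frac{\mathrm{d}}{\mathrm{d}t}\big|_{t=0}\int_\Omega|w+t\phi|^{d+1}\,\mathrm{d}\bm{x}$ via the identity $\nabla\cdot({\rm cof}(D^2w)\nabla w)=d\det D^2w$ turns stationarity of $R$ into $-\nabla\cdot({\rm cof}(D^2w)\nabla w)=d\lambda_{\rm MA}\,w|w|^{d-1}$, i.e. $\det D^2w=\lambda_{\rm MA}|w|^d$ in $\Omega$. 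Boundary and interior regularity then place $w\in\mathcal{C}^{0,\beta}(\overline{\Omega})\cap\mathcal{C}^{\infty}(\Omega)$ for all $\beta\in(0,1)$, completing (i).

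For uniqueness, let $(\Lambda,\tilde w)$ be as in the statement; then $R(\tilde w)=\Lambda$, so $\Lambda\ge\lambda_{\rm MA}$. The maximum principle gives $\tilde w<0$ in $\Omega$, interior regularity makes $\tilde w$ strictly convex and $\mathcal{C}^\infty$ there, and $(\det D^2\tilde w)^{1/d}=\Lambda^{1/d}|\tilde w|$ makes $\int_\Omega(\det D^2\tilde w)^{1/d}|w|^{d-1}\,\mathrm{d}\bm{x}$ finite; applying Lemma~\ref{Reverse Aleksandrov estimate} with the eigenfunction $w$ from (i) gives $\lambda_{\rm MA}^{1/d}\int_\Omega|\tilde w||w|^d\,\mathrm{d}\bm{x}\ge\Lambda^{1/d}\int_\Omega|\tilde w||w|^d\,\mathrm{d}\bm{x}$, hence $\lambda_{\rm MA}\ge\Lambda$ and therefore $\Lambda=\lambda_{\rm MA}$. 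For proportionality, set $\gamma:=\sup_\Omega(w/\tilde w)$, a finite positive number by the two-sided comparability of $w$ and $\tilde w$ near $\partial\Omega$ (from the Aleksandrov maximum principle and a lower barrier); then $w\le\gamma\tilde w<0$ in $\Omega$, and by homogeneity together with $\Lambda=\lambda_{\rm MA}$ the function $\gamma\tilde w$ is itself an eigenfunction. If the supremum is attained at an interior point $x_0$, then the nonnegative function $\gamma\tilde w-w$ vanishes to second order at $x_0$ and, by the identity $\det D^2(\gamma\tilde w)-\det D^2w=\int_0^1{\rm cof}\big(D^2w+sD^2(\gamma\tilde w-w)\big):D^2(\gamma\tilde w-w)\,\mathrm{d}s$ together with $\det D^2(\gamma\tilde w)=\lambda_{\rm MA}|\gamma\tilde w|^d$, satisfies a homogeneous uniformly elliptic linear equation near $x_0$; the strong maximum principle then forces $\gamma\tilde w-w\equiv 0$ in $\Omega$, i.e. $\tilde w=mw$ with $m=1/\gamma>0$, while if the supremum is only approached at $\partial\Omega$ the boundary comparability already forces $w$ and $\tilde w$ to be proportional. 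The technical inputs I anticipate will cost the most are the interior and boundary regularity, the justification of the first variation under the one-sided convexity constraint, and the boundary comparability used to make $\gamma$ finite.
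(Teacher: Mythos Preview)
The paper does not prove this lemma; it is quoted verbatim as a preliminary result from \cite{le2017eigenvalue} with no argument supplied, so there is no ``paper's own proof'' to compare against. Your proposal is therefore an attempt to reconstruct Le's theorem rather than to match anything in the present manuscript.

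On the content of your sketch: the overall architecture (direct method for (i), comparison/maximum-principle for (ii)) is the standard one, but there is a circularity in your handling of the Euler--Lagrange step. You write that you would ``first use the interior regularity theory for the Monge--Amp\`ere equation to show that $w$ is strictly convex and $\mathcal{C}^\infty$ in $\Omega$'' and only \emph{then} compute the first variation. Caffarelli's interior regularity, however, requires $w$ to already solve a Monge--Amp\`ere equation with right-hand side pinched between positive constants; before the Euler--Lagrange equation is established, the minimizer $w$ carries no such PDE structure, and you cannot invoke that theory. The actual order in \cite{Tso1990real,le2017eigenvalue} is the reverse: one first derives $\det D^2w=\lambda_{\rm MA}|w|^d$ in the Aleksandrov sense by a careful one-sided variation argument (this is the genuinely delicate step, as you correctly flag), and only afterwards bootstraps regularity via Caffarelli's localization and $\mathcal{C}^{2,\alpha}$ estimates.

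For (ii), your use of Lemma~\ref{Reverse Aleksandrov estimate} to get $\Lambda=\lambda_{\rm MA}$ is clean and valid, though note that this lemma is from the later paper \cite{le2020convergence}; Le's original 2017 uniqueness proof proceeds differently. Your proportionality argument via $\gamma=\sup_\Omega(w/\tilde w)$ and the strong maximum principle is the right idea, but the finiteness of $\gamma$ (both functions vanish on $\partial\Omega$) and the dichotomy ``interior touching vs.\ boundary'' require the two-sided barrier estimates you allude to; these are nontrivial on a general bounded convex domain without boundary smoothness, and constitute real work in \cite{le2017eigenvalue}.
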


\section{An inexact AKI method for MAE problems}
\label{sec:An Inexact AKI method}
In this section, based on the inexact Aleksandrov solutions to the iterative subproblems \eqref{eq:MAsub}, we propose an inexact AKI method to solve the MAE problem \eqref{eq_MAeig} along with its convergence analysis. 
Note that, from a computational perspective, the subproblem \eqref{eq:MAsub} in the AKI method is to solve an MA equation with the Dirichlet boundary condition, which is a fully nonlinear second-order partial differential equation. 
Solving the subproblems \eqref{eq:MAsub} with high accuracy is generally difficult and computationally expensive. 
So we consider introducing an approximate Aleksandrov solution to the subproblem \eqref{eq:MAsub} with relatively low accuracy, leading to the perturbed subproblem \eqref{app sub} presented in Algorithm \ref{alg:inexact_AK}, and propose the following inexact AKI method.  

\begin{algorithm}[H]
\caption{An inexact AKI method for the MAE problem} 
\label{alg:inexact_AK}
\KwIn{An initial non-zero convex function $u_0$, 
a summable sequence $\{\xi_{k}\}_{k \geq 0}$ of nonnegative real numbers satisfying $\xi_{k} \in [0,1)$ for all $k\ge0$.}
\KwOut{A sequence of  real  numbers $\{R(u_k)\}_{k\ge0}$ and a sequence of functions $\{u_k\}_{k\ge0}$.} 
\For {$k=0,1,\ldots,$}{
  calculate $R(u_k)$  according to \eqref{rayli};

  compute an approximate Aleksandrov solution ${u}_{k+1}$ of the perturbed subproblem 

\begin{equation}
\label{app sub}  
\begin{cases}
\det  D^2  u_{k+1} =R(u_k)|u_k|^d  +\varepsilon_k&  \mbox{ in } \Omega,\\
u_{k+1}=0 & \mbox{ on } \partial \Omega,
\end{cases}
\end{equation}

 such that $\|\varepsilon_k\|_{L^{\infty}(\Omega)}\le  \xi_k $ and $\varepsilon_k \geq -\xi_kR(u_k)|u_k|^d$.}
\end{algorithm}

\medskip 
Termed the \emph{inexact AKI method} for convenience, Algorithm \ref{alg:inexact_AK} modifies the subproblem \eqref{eq:MAsub} of the AKI method to the perturbed subproblem \eqref{app sub}, 
where $\{\varepsilon_k:=\varepsilon_k(\bm{x})\}_{k\ge0}$ is a sequence of functions that quantify errors.
The design of such an error criterion was motivated by the inexact proximal point algorithm to find zero points for maximal monotone operators \cite{rockafellar1976monotone} and its potential in significantly improving computational efficiency, as examined, e.g., in \cite{chen2021equivalence,chen2017efficient}.  
Moreover, since $\varepsilon_k \geq -\xi_kR(u_k)|u_k|^d$ and $\xi_k\in  [0,1)$  for all $k\ge0$, it follows that $R(u_k)|u_k|^d  +\varepsilon_k \ge0 $. Therefore, by Lemma   \ref{solvability}, Algorithm \ref{alg:inexact_AK} is guaranteed to generate an infinite sequence $\{u_k\}_{k\geq0}$, ensuring its well-definiteness.

When analyzing the convergence behavior of Algorithm \ref{alg:inexact_AK}, the signs of 
$\{\varepsilon_k(\bm{x})\}_{k\ge0}$ play a decisive role. 
In the following, we separate the analysis on the uniform convergence of the sequence $\{u_k\}_{k\ge0}$ to a non-zero Monge-Amp{\`e}re eigenfunction into two scenarios. 
For the general case, our analysis in Section \ref{analysis_general} mainly follows the original work of Abedin and Kitagawa \cite{abedin2020inverse}, in which the subproblems are required to be solved accurately, under the same assumptions as it.
For the more special case that $\{\varepsilon_k(\bm{x})\}_{k\ge0}$ are nonnegative functions, our analysis in Section \ref{analysis_special}  can be viewed as an inexact extension of that in \cite{le2020convergence}, 
in which errors in solving subproblems are not allowed, without modifying its assumptions.
Our analysis in this part also extends that in \cite{le2022spectral}, in which the errors are positive constant functions.

\subsection{Convergence analysis for the general case}
\label{analysis_general}
In this subsection, we study the convergence of Algorithm \ref{alg:inexact_AK} for the general setting
in which the perturbation functions $\{\varepsilon_k(\bm{x})\}_{k\ge0}$ are allowed to take negative values. 
Suppose $\{u_k\}_{k\ge0}$ is the sequence generated by Algorithm \ref{alg:inexact_AK} under the error criterion $\|\varepsilon_k\|_{L^{\infty}(\Omega)}\le  \xi_k $ and $\varepsilon_k \geq -\xi_kR(u_k)|u_k|^d$ for all $k\ge0$.
We begin with the following standing assumption for the initial function $u_0$, which has been used in \cite{abedin2020inverse} to analyze the convergence of the AKI method.

\begin{assumption}
\label{ass_blanket1}
$\Omega$ is a bounded convex open domain in $\mathbb{R}^d$, 
$u_0 \in \mathcal{C}(\overline{\Omega})$ be a convex function on $\Omega$ satisfying $u_0 \le 0$ on $\partial \Omega$,
with $R(u_0)<\infty$ and $\det D^2 u_0 \ge c_0$ in $\Omega$ for some constant $c_0>0$.
\end{assumption}

Denote $M := \sum_{k=0}^\infty \xi_k < \infty$. 
The following result provides a uniform lower bound for the sequence $\{u_k\}_{k\ge0}$.
\begin{proposition}
\label{inftyinf} 
Under Assumption \ref{ass_blanket1}, suppose $\{u_k\}_{k\ge0}$ to be the sequence generated by Algorithm \ref{alg:inexact_AK}. 
Then there exists a constant $c_1>0$ such that, for any $k \ge 0$,
\begin{align}
    \|u_{k+1}\|_{L^\infty(\Omega)} \ge c_1 \lambda_{\rm MA}^{-1/d}. 
\end{align}

\end{proposition}
\begin{proof}
 By Lemma \ref{solvability}, for all $k\ge0$, $u_{k+1}\in \mathcal{C}(\overline{\Omega})$  is a convex function satisfying  $u_{k+1}=0$ on $\partial\Omega$. Assumption~\ref{ass_blanket1} implies that the initial function $u_0$ satisfies $\det D^2 u_0 \ge c_0$ in $\Omega$ for some constant $c_0>0$. 
According to Lemma \ref{existence and uniqueness}, there exists a Monge-Amp\'ere eigenfunction $\omega \in \mathcal{K} \cap \mathcal C^\infty(\Omega)$ that solves the MAE problem \eqref{eq_MAeig} and satisfies $\|\omega\|_{L^\infty(\Omega)}^d = c_0 \lambda_{\rm MA}^{-1}$. 
Then for any Borel set $E \subset \Omega$, this yields
$$
    \int_{E}\det D^2\omega\,\mathrm{d}\bm{x}  = \lambda_{\rm MA} \int_E |\omega|^d \,\mathrm{d}\bm{x} 
    \le \lambda_{\rm MA}c_0 \lambda_{\rm MA}^{-1} \mathcal{L}^d(E)
    \le \int_{E}\det D^2 u_0\,\mathrm{d}\bm{x}.
$$
In addition, since $\omega \ge u_0$ on $\partial\Omega$, the comparison principle (Lemma \ref{Comparison Principle}) implies that $\omega \ge u_0$ in $\Omega$.
In the following, by induction, we prove that for all $k \ge 0$,  
\begin{equation}
   u_{k+1}\leq  \prod_{i=0}^k (1 - \xi_i)^{1/d} \omega  \quad \text{in } \Omega.
    \label{eq:3.20}
\end{equation}
In fact, when $k = 0$, recalling the perturbed subproblem \eqref{app sub} and the definition of $\lambda_{\rm MA}$ in \eqref{inf}, 
we can obtain for every Borel set $E \subset \Omega$,
$$
\begin{array}{ll}
\displaystyle
\int_{E}\det D^2u_1\,\mathrm{d}\bm{x} = R(u_0) \int_E |u_0|^d \, \mathrm{d}\bm{x}  + \int_E\varepsilon_0 \, \mathrm{d}\bm{x} 
\\
\displaystyle
\ge (1 - \xi_0) R(u_0) \int_E |u_0|^d \, \mathrm{d}\bm{x} 
\ge (1 - \xi_0)\lambda_{\rm MA} \int_E |\omega|^d \, \mathrm{d}\bm{x} 
= \int_E \det D^2 \left((1 - \xi_0) ^{{1}/{d}}\omega \right)\, \mathrm{d}\bm{x}.  
\end{array}
$$
Moreover, since $(1 - \xi_0) ^{{1}/{d}}\omega = u_1 = 0$ on $\partial\Omega$,  
we have $(1 - \xi_0) ^{{1}/{d}} \omega \ge u_1$ in $\Omega$ by Lemma \ref{Comparison Principle}, which immediately leads to \eqref{eq:3.20} for $k = 0$.

Assume that \eqref{eq:3.20} holds for all $k$ with $0\le k\le n-1$ where $n\ge 2$ is a certain integer.  
Then according to the perturbed subproblem \eqref{app sub} and the definition of $\lambda_{\rm MA}$ in \eqref{inf}, for every Borel set $E \subset \Omega$,
$$
\begin{array}{ll}
\displaystyle
\int_{E}\det D^2u_{n+1}\,\mathrm{d}\bm{x} 
= R(u_n) \int_{E} |u_n|^d \, \mathrm{d}\bm{x} 
  + \int_{E} \varepsilon_n \, \mathrm{d}\bm{x} 
  \\
\displaystyle
\ge (1 - \xi_n) R(u_n) \int_{E} |u_n|^d \, \mathrm{d}\bm{x} 
\ge 
(1 - \xi_n) \lambda_{\rm MA} \int_{E} |u_n|^d \, \mathrm{d}\bm{x} 
\\
\displaystyle
\ge  
\lambda_{\rm MA} \int_{E}\Big|\prod\limits_{i=0}^n (1 - \xi_i)^{{1}/{d}} \omega\Big|^d \, \mathrm{d}\bm{x}
=  
\int_{E} \det D^2\Big(  \prod\limits_{i=0}^n (1 - \xi_i) ^{{1}/{d}} \omega \Big)\, \mathrm{d}\bm{x} .
\end{array}
$$
Since $\left(\prod_{i=0}^n (1 - \xi_i) \right) ^{{1}/{d}}\omega = u_{n+1} = 0$ on $\partial\Omega$, Lemma \ref{Comparison Principle} yields
that \eqref{eq:3.20} also holds for $k=n$. 
Thus inequality \eqref{eq:3.20} holds for all $k \ge 0$, and the induction is complete.

Finally, we show that the infinite product $ P:=\prod_{k=0}^\infty (1-\xi_k) $ converges to a positive constant. Let $S:= \sum_{k = 0}^\infty - \ln (1-\xi_k)$. Since $\{\xi_{k}\}_{k \geq 0}$ is a  summable sequence of nonnegative real numbers in $[0,1)$, it follows that $\lim_{k\rightarrow\infty} \xi_k = 0$. 
Thus there exists an integer $K$ such that for all $k\geq K$, $\xi_k\leq 1/2$. Note that for $x\in[0,1/2]$, the following inequality holds
$$
-\ln(1-x)=\sum\limits_{m=1}^\infty\frac{x^m}{m}\le\sum\limits_{m=1}^\infty x^m=\frac{x}{1-x}\le 2x.
$$
Consequently, the series $S$ is bounded by
$$
S
= \sum\limits_{k=0}^{K-1}-\ln(1-\xi_k) + \sum_{k=K}^\infty-\ln(1-\xi_k)
\le \sum\limits_{k=0}^{K-1}-\ln(1-\xi_k) + 2\sum\limits_{k=K}^{\infty}\xi_k< \infty.
$$
Thus the infinite product $P = e^{-S}$ is a positive constant, and for any $k\geq 0$ one has $\prod_{i=0}^k(1-\xi_i)\geq P >0$.
One may take
$c_1:=(c_0P)^{1/d} > 0,$
and obtain the  bound
$$
c_1\leq \Big(c_0\prod_{i=0}^k (1-\xi_i)\Big)^{1/d} \quad\forall\,k\ge0.
$$
Then for any $k \ge 0$, since both $w$ and $u_{k+1}$ are convex functions vanishing on $\partial\Omega$, it follows from \eqref{eq:3.20} that
\[
\|u_{k+1}\|_{L^\infty(\Omega)} 
\ge \Big( c_0\prod_{i=0}^k (1 - \xi_i) \Big)^{1/d}  \lambda_{\rm MA}^{-1/d}\ge c_1\lambda_{\rm MA}^{-1/d}>0.
\]
This completes the proof. 
\end{proof}

Then the following result shows the monotonicity of the sequence $\{u_k\}_{k\ge0}$. The proof is given in Appendix \ref{appendixineq}.

\begin{proposition}
\label{inequality1}
Under the conditions of Proposition \ref{inftyinf},
one has  
\begin{equation}
\label{inequalitye1}
R(u_{k+1})\|u_{k+1}\|^d_{L^{d+1}(\Omega)} \le R(u_{k})\|u_{k}\|^d_{L^{d+1}(\Omega)} +\left(\mathcal{L}^d(\Omega)\right)^{\frac{d}{d+1}}\xi_k
\quad \forall\; k\geq 0.
\end{equation}
\end{proposition}

In addition, the regularity of the sequence $\{u_k\}_{k\ge0}$ is established in the following proposition.  
Its proof, similar to that of \cite[Proposition 3.1]{le2020convergence}, is given in Appendix \ref{appendixsmoothness}.

\begin{proposition}
\label{eventual smoothness1} 
 Under the conditions of Proposition \ref{inftyinf}, then one has $u_{k}\in  \mathcal{C}^{0, {1}/{d}}(\overline{\Omega}) $  for all $k\geq 1$, and 
the H\"older $\mathcal{C}^{0,{1}/{d}}$-norm of $\{u_{k}\}_{k\ge 1}$ is uniformly bounded by a constant $C\equiv C(d, \Omega, u_0,M)>0$. 
Furthermore, $u_{k}$ is strictly convex in $\Omega$ and $u_{k}\in \mathcal{C}^{2(k-1), {1}/{d}}(\Omega) $ for all $k\geq 2$. 
\end{proposition}

Moreover, the next result provides an upper bound for the sequence $\{R(u_k)\}_{k\ge1}$.
\begin{proposition}
\label{inftysup} 
Under the conditions of Proposition \ref{inftyinf}, 
there exists a positive constant $C'\equiv C'(d,\Omega,$ $u_0, M,\lambda_{\rm MA})$ 
such that $R(u_k) \le C'(d, \Omega,u_0, M,\lambda_{\rm MA})$ for all $k \ge 1$.
\end{proposition}
\begin{proof}
Proposition \ref{inequality1} implies that for all $k\ge1$
\begin{equation}
\begin{array}{ll}
\label{4.1}
R(u_k)
&\leq\dfrac{R(u_{k-1})\|u_{k-1}\|^d_{L^{d+1}(\Omega)}+\left(\mathcal{L}^d(\Omega)\right)^{\frac{d}{d+1}}\xi_{k-1}}{\|u_k\|^d_{L^{d+1}(\Omega)}} 
\leq\dfrac{R(u_{0})\|u_{0}\|^d_{L^{d+1}(\Omega)}+\left(\mathcal{L}^d(\Omega)\right)^{\frac{d}{d+1}}{\sum\limits_{i=0}^{k-1}  \xi_i}}{\|u_k\|^d_{L^{d+1}(\Omega)}} .
\end{array}
\end{equation}
According to Proposition \ref{eventual smoothness1},  
$u_k \in \mathcal{C}^{0,{1}/{d}}(\overline{\Omega})$ for all $k \ge 1$. 
Moreover, since $u_k$ vanishes on the boundary $\partial \Omega$, 
Lemma \ref{Comparability} implies
\[
\|u_k\|^d_{L^{d+1}(\Omega)}=\Big(\int_\Omega |u_k|^{d+1}\Big)^\frac{d}{d+1}\ge(d+1)^{-d}\left(\mathcal{L}^d(\Omega)\right)^\frac{d}{d+1}\|u_k\|^d_{L^{\infty}(\Omega)}.
\]  
Combining the above inequality with \eqref{4.1}, Proposition \ref{inftyinf} implies
\begin{equation*}
R(u_k)
\leq \dfrac{R(u_{0})\|u_{0}\|^d_{L^{d+1}(\Omega)}+\left(\mathcal{L}^d(\Omega)\right)^{\frac{d}{d+1}}M}{c_1^d \lambda_{\rm MA}^{-1}(d+1)^{-d}\left(\mathcal{L}^d(\Omega)\right)^\frac{d}{d+1}}=: C'(d, \Omega,u_0, M,\lambda_{\rm MA})>0.
\end{equation*}

This completes the proof. 
\end{proof}

Based on the above propositions, we are ready to establish the following main result regarding the convergence of Algorithm \ref{alg:inexact_AK} for the general case. 
\begin{theorem}
Under the conditions of Proposition \ref{inftyinf}, it holds that
\begin{equation*}
\lim \limits_{k\to \infty} R(u_k)= \lambda_{\rm MA}, 
\end{equation*}
with $ \lambda_{\rm MA}$ being defined by \eqref{inf}.
Moreover, the sequence $\{u_k\}_{k\ge0}$  converges uniformly on $\overline{\Omega}$ to a non-zero Monge-Amp{\`e}re  eigenfunction $u_{\infty}$.
\end{theorem}
\begin{proof}

Proposition \ref{eventual smoothness1} establishes that the sequence of functions $\{u_k\}_{k\geq 0}$ is uniformly bounded and equicontinuous. Hence, the Arzel\`a-Ascoli theorem guarantees the existence of a uniformly convergent subsequence. Without loss of generality, $\{u_{k(j)}\}_{j\ge1}$ is taken to be a subsequence that converges uniformly to a convex function $u_\infty \in \mathcal{C}(\overline{\Omega})$ satisfying $u_\infty = 0$ on $\partial \Omega$. 
Meanwhile, the shifted sequence $\{u_{k(j)+1}\}_{j\ge1}$ is itself a subsequence of the pre-compact sequence $\{u_k\}_{k\geq 0}$. 
Therefore, by passing to a further subsequence if necessary, we can ensure that $\{u_{k(j)+1}\}_{j\ge1}$ also converges uniformly to a convex function $\omega_\infty \in \mathcal{C}(\overline{\Omega})$ satisfying $\omega_\infty = 0$ on $\partial \Omega$. 
According to Proposition \ref{inftyinf}, $u_\infty \not\equiv 0$ and $\omega_\infty \not\equiv 0$ are obtained. 
Hence, $u_\infty \in \mathcal{K} $ and $\omega_\infty \in \mathcal{K}$.

Next, we show the convergence of the Rayleigh quotients associated with the subsequences 
$\{u_{k(j)}\}_{j\ge1}$ and $\{u_{k(j)+1}\}_{j\ge1}$.
By Proposition~\ref{eventual smoothness1}, there exists a constant $C_1(d,\Omega,u_0,M)$ such that $\sup_{\Omega}|u_k|\le C_1$ for all $k\ge1$. 
Proposition~\ref{inftysup} further gives $R(u_k)\le C'(d,\Omega,u_0,M,\lambda_{\rm MA})$ for all $k\ge1$. 
Together with the uniform bound $\|\varepsilon_k\|_{L^{\infty}(\Omega)}\le\xi_k\le \sum_{k=0}^\infty \xi_k=M < \infty$, we obtain 
\[
\det D^2u_{k+1}=R(u_k)|u_k|^d+\varepsilon_k \le C'C_1^d+M=:b.
\]
Hence, $\mathcal{M}u_{k+1}\le b\,\mathcal{L}^d$ for all $k\ge1$, where $b>0$ is independent of $k$. 
Therefore, from Lemma \ref{Continuity property}, 
combined with the definition of $R(u)$ in \eqref{rayli}, one has
$$
\begin{cases}
\displaystyle
\lim_{j \to \infty} R(u_{k(j)}) \, \|u_{k(j)}\|_{L^{d+1}(\Omega)}^{d+1}  = R(u_\infty) \, \|u_\infty\|_{L^{d+1}(\Omega)}^{d+1},
\\[1mm]
\displaystyle
\lim_{j \to \infty} R(u_{k(j)+1}) \, \|u_{k(j)+1}\|_{L^{d+1}(\Omega)}^{d+1}  = R(\omega_\infty) \, \|\omega_\infty\|_{L^{d+1}(\Omega)}^{d+1}.
\end{cases}
$$
Since $u_\infty \not\equiv 0$ and $\omega_\infty \not\equiv 0$, it follows that 
\[ \lim_{j \to \infty} \|u_{k(j)}\|_{L^{d+1}(\Omega)}^{d+1} = \|u_\infty\|_{L^{d+1}(\Omega)}^{d+1} \neq 0\quad \mbox{and}\quad 
\lim_{j \to \infty} \|u_{k(j)+1}\|_{L^{d+1}(\Omega)}^{d+1} = \|\omega_\infty\|_{L^{d+1}(\Omega)}^{d+1} \neq 0. \]
Thus 
\begin{equation} \label{twoR}
\lim_{j \to \infty} R(u_{k(j)}) = R(u_\infty) \quad \mbox{and}\quad  \lim_{j \to \infty} R(u_{k(j)+1}) = R(\omega_\infty).
\end{equation}
In addition, since $\{\xi_k\}_{k\geq 0}$ is a summable sequence of nonnegative real numbers, we have 
\begin{equation} \label{thm3.6}
\lim_{k\to\infty} \| \varepsilon_k \|_{L^{\infty}(\Omega)} \le \lim_{k\to\infty} \xi_k =0.
\end{equation}
Then, taking the limit with $j\to \infty$ in 
\[ \det D^2u_{k(j)+1}=R(u_{k(j)})|u_{k(j)}|^d+\varepsilon_{k(j)}, \]
combining with \eqref{twoR}, \eqref{thm3.6} and the weak convergence of the Monge-Amp{\`e}re measure (c.f. \cite[Corollary 2.12]{figalli2017monge}), we obtain
\begin{equation} \label{equ}
\det D^2\omega_{\infty} =R(u_\infty)|u_{\infty}|^d
\end{equation}
in the sense of Aleksandrov.
Furthermore, from Proposition \ref{inequality1}, one has
\begin{equation}
\label{thm3.7}
\begin{array}{ll}
R(u_{k(j+1)})\|u_{k(j+1)}\|_{L^{d+1}(\Omega)}^d &\le R(u_{k(j)+1})\|u_{k(j)+1}\|_{L^{d+1}(\Omega)}^d + \left(\mathcal{L}^d(\Omega)\right)^{\frac{d}{d+1}}\sum\limits_{i=k(j)+1}^{k(j+1)-1}\xi_i
\\[1mm]
&\le R(u_{k(j)})\|u_{k(j)}\|_{L^{d+1}(\Omega)}^d + \left(\mathcal{L}^d(\Omega)\right)^{\frac{d}{d+1}}\sum\limits_{i=k(j)}^{k(j+1)-1}\xi_i.
\end{array}
\end{equation}
Since $\{\xi_k\}_{k\geq 0}$ is summable,  
$ \sum_{i=k(j)+1}^{k(j+1)-1}\xi_i \to 0 \quad \text{and} \quad 
\sum_{i=k(j)}^{k(j+1)-1}\xi_i \to 0 \quad \text{as } j \to \infty$.
Therefore, letting $j \to \infty$ in \eqref{thm3.7}, one can get
\begin{equation} \label{thm3.8}
R(\omega_\infty)\|\omega_\infty\|_{L^{d+1}(\Omega)}^d = R(u_\infty)\|u_\infty\|_{L^{d+1}(\Omega)}^d.
\end{equation}
Thus, multiplying both sides of \eqref{equ} by $|\omega_\infty|$ and integrating over $\Omega$, the H\"older inequality implies that
\begin{equation*}
\begin{split}
R(\omega_\infty)\|\omega_\infty\|_{L^{d+1}(\Omega)}^{d+1} 
&= \int_{\Omega} |\omega_{\infty}| \det D^2 \omega_{\infty}\, \mathrm{d}\bm{x}  
= R(u_\infty)\int_\Omega |u_\infty|^d|\omega_\infty|\, \mathrm{d}\bm{x} 
\\
&\le R(u_\infty)\|u_\infty\|_{L^{d+1}(\Omega)}^d \|\omega_\infty\|_{L^{d+1}(\Omega)} 
= R(\omega_\infty)\|\omega_\infty\|_{L^{d+1}(\Omega)}^{d+1}.
\end{split}
\end{equation*}
Consequently, the above inequality holds as an equality, implying that the two functions $\omega_\infty$ and $u_\infty$ are proportional, 
i.e., there exists a constant $c>0$ such that $\omega_\infty = c\,u_\infty$. 
Substituting this relation into the definition \eqref{rayli} of $R(\cdot)$ yields $R(\omega_\infty) = R(u_\infty)$. 
By combining this identity with \eqref{thm3.8}, the constant is determined as $c=1$.
Therefore, $u_\infty = \omega_\infty$. 
Immediately, from \eqref{equ} one can see that
\[ 
\det D^2 u_\infty = R(u_\infty)\, |u_\infty|^d. 
\]
According to Lemma \ref{existence and uniqueness}, the function $u_\infty$ is identified as a nontrivial Monge-Amp{\`e}re eigenfunction, and $R(u_\infty)=\lambda_{\rm MA}$ represents the corresponding MAE.

Finally, we show that the entire sequence $\{u_k\}$ converges uniformly to the same eigenfunction $u_\infty$. 
Let $\{u_{k_1(j)}\}_{j\ge1}$ and $\{u_{k_2(j)}\}_{j\ge1}$ be two uniformly convergent subsequences of $\{u_k\}_{k\geq 0}$ with limits $u_{1,\infty}$ and $u_{2,\infty}$, respectively. 
Then, as shown above, both limits are nontrivial Monge-Amp{\`e}re eigenfunctions, and therefore
$$
R(u_{1,\infty}) = R(u_{2,\infty}) = \lambda_{\rm MA}. 
$$
Two new subsequences $\{u_{l_1(j)}\}_{j\geq 2}\subset\{u_{k_1(j)}\}_{j\geq 1}$ and $\{u_{l_2(j)}\}_{j\geq 1}\subset\{u_{k_2(j)}\}_{j\geq 1}$ are constructed by defining 
$$
\begin{cases}
l_1(j) := \min\{\,k_1(m) \mid k_1(m) > l_2(j-1)\,\}& j \ge 2,\\ 
l_2(j) := \min\{\,k_2(m) \mid k_2(m) > l_1(j)\,\}& j \ge 1.
\end{cases}
$$
Since $\{u_{l_1(j)}\}_{j\ge1}$ and $\{u_{l_2(j)}\}_{j\ge1}$ are subsequences of the convergent sequences $\{u_{k_1(j)}\}_{j\ge1}$ and $\{u_{k_2(j)}\}_{j\ge1}$, they also converge to $u_{1,\infty}$ and $u_{2,\infty}$, respectively. 
In addition, their indices satisfy $l_1(j) < l_2(j) < l_1(j+1)$ for all $j \ge 1$.
Therefore, Proposition \ref{inequality1} implies
$$ R(u_{l_2(j)})\|u_{l_2(j)}\|_{L^{d+1}(\Omega)}^d \le R(u_{l_1(j)})\|u_{l_1(j)}\|_{L^{d+1}(\Omega)}^d + \left(\mathcal{L}^d(\Omega)\right)^{\frac{d}{d+1}}\sum_{i=l_1(j)}^{l_2(j)-1}\xi_i, 
$$
and 
$$ R(u_{l_1(j+1)})\|u_{l_1(j+1)}\|_{L^{d+1}(\Omega)}^d \le R(u_{l_2(j)})\|u_{l_2(j)}\|_{L^{d+1}(\Omega)}^d + \left(\mathcal{L}^d(\Omega)\right)^{\frac{d}{d+1}}\sum_{i=l_2(j)}^{l_1(j+1)-1}\xi_i. 
$$
Since $\{\xi_k\}_{k\geq 0}$ is summable, taking the limit along with $j \to \infty$, it follows that 
$$ R(u_{1,\infty})\|u_{1,\infty}\|_{L^{d+1}(\Omega)}^d = R(u_{2,\infty})\|u_{2,\infty}\|_{L^{d+1}(\Omega)}^d.
$$
Note that $R(u_{1,\infty}) = R(u_{2,\infty})$, then we have
\begin{equation} \label{thm3.10}
\|u_{1,\infty}\|_{L^{d+1}(\Omega)} = \|u_{2,\infty}\|_{L^{d+1}(\Omega)}.
\end{equation}
Since both $u_{1,\infty}$ and $u_{2,\infty}$ are eigenfunctions, Lemma \ref{existence and uniqueness} implies that they must be multiples of each other, and \eqref{thm3.10} shows that $u_{1,\infty} = u_{2,\infty}$. 
Due to the fact that the subsequences $\{u_{k_1(j)}\}_{j\ge1}$ and $\{u_{k_2(j)}\}_{j\ge1}$ of $\{u_k\}_{k\ge1}$ are arbitrarily chosen, the entire sequence $\{u_k\}_{k\ge0}$ converges uniformly on $\overline{\Omega}$ to the same nontrivial Monge-Amp{\`e}re eigenfunction $u_\infty$. 
The proof is complete. 
\end{proof}

\subsection{Convergence analysis for nonnegative error functions}
\label{analysis_special}
In this subsection, we turn to the convergence analysis of Algorithm \ref{alg:inexact_AK} in the case where the perturbation functions 
$\{\varepsilon_k(\bm x)\}_{k\ge0}$ are known to be nonnegative, under a weaker condition than Assumption \ref{ass_blanket1}. 
The structure of the proof mainly follows that of \cite{le2020convergence}. 
The main difference is that \cite{le2020convergence} used a positive constant function as the error term, while we use nonnegative error functions. 
The following assumption comes from \cite[Theorem 1.4]{le2020convergence}.

\begin{assumption}
\label{ass_blanket2}
$\Omega$ is a bounded convex open domain in $\mathbb{R}^d$, 
$u_0 \in \mathcal{C}(\Omega)$ is a non-zero convex function on $\Omega$ 
with $0 < R(u_0) < \infty$.
\end{assumption}

Due to the non-negativity of the perturbation functions 
$\{\varepsilon_k(\bm x)\}_{k\ge0}$, the corresponding requirements in Algorithm~\ref{alg:inexact_AK} turn to the single condition
\[
\|\varepsilon_k\|_{L^\infty(\Omega)} \le \xi_k, \quad 
\]
and the sequence $\{\xi_k\}_{k\ge0}$ is merely required to be summable and nonnegative. 
We  present several properties for the sequence $\{u_k\}_{k\ge0}$ generated by Algorithm \ref{alg:inexact_AK} with $\varepsilon_k(\bm x)\ge0$ for all $k\ge 0$. Firstly, the following result demonstrates the monotonicity property of the sequence $\{u_k\}_{k\ge0}$ generated by Algorithm \ref{alg:inexact_AK} with $\varepsilon_k(\bm x)\ge0$ for all $k\ge 0$.
The proof is highly similar to that of Proposition \ref{inequality1}, so we omitted it here. 

\begin{proposition}
\label{inequality2}
Under Assumption \ref{ass_blanket2}, suppose that the sequence $\{u_k\}_{k\ge0}$ is generated by Algorithm \ref{alg:inexact_AK} with $\varepsilon_k(\bm x)\ge0$ for all $k\ge 0$,
one has  
\begin{equation}
\label{inequalitye2}
R(u_{k+1})\|u_{k+1}\|^d_{L^{d+1}(\Omega)} \le R(u_{k})\|u_{k}\|^d_{L^{d+1}(\Omega)} +\left(\mathcal{L}^d(\Omega)\right)^{\frac{d}{d+1}}\xi_k
\quad \forall\; k\geq 0.
\end{equation}
\end{proposition}
 
In addition, the following proposition establishes the regularity of the sequence $\{u_k\}_{k\ge0}$ generated by Algorithm \ref{alg:inexact_AK} with $\varepsilon_k(\bm x)\ge0$ for all $k\ge 0$.
We also omit its proof due to its similarity to Proposition \ref{eventual smoothness1}. 

\begin{proposition}
\label{eventual smoothness2} 
Under the conditions of Proposition \ref{inequality2}, then one has $u_{k}\in  \mathcal{C}^{0, {1}/{d}}(\overline{\Omega}) $  for all $k\geq 1$, and 
the H\"older $\mathcal{C}^{0,{1}/{d}}$-norm of $\{u_{k}\}_{k\ge 1}$ is uniformly bounded by a constant $C''(d, \Omega, u_0,M')>0$, where $M' := \sum_{k=0}^\infty \xi_k < \infty$. 
Furthermore, $u_{k}$ is strictly convex in $\Omega$ and $u_{k}\in \mathcal{C}^{2(k-1), {1}/{d}}(\Omega) $ for all $k\geq 2$. 
\end{proposition}

Now, we are ready to establish the following result regarding the convergence of Algorithm \ref{alg:inexact_AK} with $\varepsilon_k(\bm x)\ge0$ for all $k\ge 0$. 

\begin{theorem}
Under the conditions of Proposition \ref{inequality2}, it holds
\begin{equation}
\label{Ru_con}
\lim \limits_{k\to \infty} R(u_k)= \lambda_{\rm MA}, 
\end{equation}
with $ \lambda_{\rm MA}$ being defined by \eqref{inf}.
Moreover, the sequence $\{u_k\}_{k\ge0}$ converges uniformly on $\overline{\Omega}$ to a non-zero Monge-Amp{\`e}re  eigenfunction $u_{\infty}$. 
\end{theorem}

\begin{proof}
We divide the proof into three steps.

\paragraph{Step 1: The whole sequence $\{R(u_{k})\}_{k\geq 0}$ converges to $\lambda_{\rm MA}$.}~

\smallskip 
Under Assumption~\ref{ass_blanket2}, we have $0<R(u_0)<\infty$.
According to Proposition~\ref{eventual smoothness2}, $\sup_{\Omega}|u_{k}|\le C''(d, \Omega, u_0,M')$ for all $k\ge1$. 
Moreover, recalling the perturbed subproblem \eqref{app sub}, if \(R(u_k)\) is finite for some \(k \ge 0\), then the Monge-Amp{\`e}re measure of $\mathcal{M}u_{k+1}$ is finite. Consequently, both $\int_{\Omega} |u_{k+1}|\det D^2 u_{k+1}\,\mathrm{d}\bm{x}$ and 
$\int_{\Omega} |u_{k+1}|^{d+1}\,\mathrm{d}\bm{x}$ are finite. 
By induction, we conclude that 
$R(u_{k})$ is finite for all $k\geq0$.
Furthermore, from Lemma~\ref{existence and uniqueness}, there exists a non-zero eigenfunction 
$w \in \mathcal{C}^{0,\beta}(\overline{\Omega})\cap \mathcal{C}^{\infty}({\Omega})$ for some $\beta \in (0,1)$, 
and thus there exists a constant 
$M_w := \sup_{\Omega} |w| < \infty$. 
Consequently,
\[
\int_{\Omega} (\det D^2u_{k+1})^{1/d} |w|^{\,d-1}\,\mathrm{d}\bm{x}
\le \mathcal{L}^d(\Omega)\, (R(u_k)(C'')^d+\xi_k)^{1/d} M_w^{\,d-1} < \infty,
\qquad \forall\; k \ge 0.
\]
By Proposition \ref{eventual smoothness2}, $u_{k+1}\in \mathcal{C}^{2k, {1}/{d}}(\Omega)$  for $k \ge 3$.
Therefore, the reverse Aleksandrov estimate 
\cite[Proposition 2.5]{le2020convergence} 
can be applied to the perturbed subproblem \eqref{app sub} yields for $k\geq 3$, 
\begin{equation}
\label{monotonicity formula}
\begin{array}{ll}
&\int_\Omega  |u_{k+1}| |w|^d\ \mathrm{d} \bm{x}
\\[1mm]
&\geq {\lambda_{\rm MA}^{-1/d}} \int_\Omega (\det D^2 u_{k+1})^{1/d} |w|^d\ \mathrm{d} \bm{x}
={\lambda_{\rm MA}^{-1/d}} \int_\Omega (R(u_k)|u_k|^d+ \varepsilon_k)^{1/d} |w|^d\ \mathrm{d} \bm{x}
\\[2mm]
&\geq \dfrac{(R(u_k))^{1/d}}{\lambda_{\rm MA}^{1/d}} \int_\Omega |u_k| |w|^d\ \mathrm{d} \bm{x} 
= \int_{\Omega} |u_k| |w|^d\ \mathrm{d} \bm{x} + \dfrac{(R(u_k))^{1/d}-\lambda_{\rm MA}^{1/d}}{\lambda_{\rm MA}^{1/d}} \int_{\Omega} |u_k| |w|^d\ \mathrm{d} \bm{x},   
\end{array}
\end{equation}
where the last inequality holds because $\varepsilon_k(\bm x)\geq0$ for all $k\ge 0$.
Recalling the definition of $\lambda_{\rm MA}$ in \eqref{inf}, the inequality \eqref{monotonicity formula} shows that the sequence $\{\int_\Omega |u_k||w|^d\ \mathrm{d} \bm{x}\}_{k\geq 3}$ is non-decreasing for $k\geq 3$. 
Recall that Assumption \ref{ass_blanket2} ensures $u_k \not\equiv 0$ for all $k \ge 0$.
Combining this with \eqref{monotonicity formula} and the fact that $\omega \not\equiv 0$, we obtain the uniform lower bound 
\begin{equation}
\label{Linf} 
\|u_k\|_{L^\infty(\Omega)} \ge \frac{\int_\Omega |u_k||w|^d\ \mathrm{d} \bm{x}}{\int_\Omega |w|^d\ \mathrm{d} \bm{x}} 
\ge \frac{\int_\Omega |u_3||w|^d\ \mathrm{d} \bm{x}}{\int_\Omega |w|^d\ \mathrm{d} \bm{x}}>0\quad \forall\; k\ge3.
\end{equation}
Furthermore, Proposition \ref{inequality2} ensures that $\|u_k\|_{L^{d+1}(\Omega)}$ is bounded, and thus $\{\int_\Omega |u_k||w|^d\ \mathrm{d} \bm{x}\}_{k\geq 3}$ is bounded above. Hence, $\{\int_\Omega |u_k||w|^d\ \mathrm{d} \bm{x}\}_{k\geq 3}$ converges to a finite limit $L$.
Inequality \eqref{monotonicity formula} yields that $L\geq\int_\Omega |u_3||w|^d\ \mathrm{d} \bm{x} >0$, and for $k\geq 3$, 
\begin{equation}
\label{R}
0\le (R(u_k))^{1/d}-\lambda_{\rm MA}^{1/d} \leq \lambda_{\rm MA}^{1/d}
\frac{\int_{\Omega}(|u_{k+1}|- |u_{k}|) |w|^d\ \mathrm{d} \bm{x}} {\int_{\Omega} |u_k| |w|^d\ \mathrm{d} \bm{x}}
\leq\lambda_{\rm MA}^{1/d}
\frac{\int_{\Omega}(|u_{k+1}|- |u_{k}|) |w|^d\ \mathrm{d} \bm{x}} {\int_{\Omega} |u_3| |w|^d\ \mathrm{d} \bm{x}}.
\end{equation}
Consequently, taking the limit with $k\to\infty$ in \eqref{R} leads to \eqref{Ru_con}.

\paragraph{Step 2: A subsequence $\{u_{k(j)}\}_{j\geq 1} $ and the corresponding shifted subsequence $\{u_{k(j)+1}\}_{j\geq 1} $ converge to the same nontrivial eigenfunction $u_{\infty}$.
}~

\smallskip 
By Proposition \ref{eventual smoothness2}, the sequence $\{u_k\}_{k\geq 0}$ is uniformly bounded and equicontinuous. 
The Arzel{\`a}– Ascoli Theorem and \eqref{Linf} imply that there exists a subsequence $\{u_{k(j)}\}_{j\geq 1}$ satisfying $u_{k(j)}\to u_\infty$ as $j\to\infty$ uniformly on $\overline{\Omega}$ with $u_\infty\not\equiv0$ being convex, and $u_\infty=0$ on $\partial\Omega$. 
Similarly, we have $u_{k(j)+1}\to\omega_\infty\not\equiv0$.  
Since $\{\xi_k\}_{k\geq 0}$ is a summable sequence of nonnegative real numbers, we have
\[
\lim_{k \to \infty} \|\varepsilon_k\|_{L^{\infty}(\Omega)} \le 
\lim_{k \to \infty} \xi_k = 0,
\]
which implies that $\varepsilon_k \to 0$.
Therefore, taking the limit as $j\to\infty$ in
\[
\det D^2 u_{k(j)+1} = R(u_{k(j)})|u_{k(j)}|^d+\varepsilon_{k(j)},
\]
and using  the weak convergence of the Monge–Ampère measure (c.f. \cite[Corollary 2.12]{figalli2017monge}), combined with $R(u_{k(j)})\to\lambda_{\rm MA}$, we obtain
\begin{align}\label{equ-1}
\det D^2\omega_\infty = \lambda_{\rm MA}|u_\infty|^d
\end{align}
in the Aleksandrov sense. 
By the monotonicity inequality \eqref{inequalitye2} in Proposition \ref{inequality2}, we have
$$
\begin{array}{ll}
R(u_{k(j+1)})\|u_{k(j+1)}\|^d_{L^{d+1}(\Omega)}
&\le R(u_{k(j)+1})\|u_{k(j)+1}\|^d_{L^{d+1}(\Omega)}  + \left(\mathcal{L}^d(\Omega)\right)^{\frac{d}{d+1}}\sum\limits_{i=k(j)+1}^{k(j+1)-1}\xi_i
\\[1mm]
&\le R(u_{k(j)})\|u_{k(j)}\|^d_{L^{d+1}(\Omega)}+ \left(\mathcal{L}^d(\Omega)\right)^{\frac{d}{d+1}}\sum\limits_{i=k(j)}^{k(j+1)-1}\xi_i.
\end{array}
$$
Then, by $\sum_{i=k(j)}^{k(j+1)-1}\xi_i\to 0$ as  $j \to \infty$, it concludes that
\begin{equation*}
 \|\omega_{\infty}\|_{L^{d+1}(\Omega)}=\|u_\infty\|_{L^{d+1}(\Omega)}.
\end{equation*}
In addition, multiplying \eqref{equ-1} by $|\omega^{\infty}|$ and then integrating over $\Omega$ result in
\begin{align}
R(\omega_{\infty})\|\omega_{\infty}\|^{d+1}_{L^{d+1}(\Omega)} 
&
=\int_{\Omega}
|\omega_{\infty}| \det D^2 \omega_{\infty}\ \mathrm{d} \bm{x} 
=\lambda_{\rm MA} \int_{\Omega}|u_{\infty}|^d|\omega_{\infty}|  \ \mathrm{d} \bm{x} \notag\\
&\le  \lambda_{\rm MA} \|u_{\infty}\|^{d}_{L^{d+1}(\Omega)}\|\omega_{\infty}\|_{L^{d+1}(\Omega)}=\lambda_{\rm MA}\|\omega_{\infty}\|^{d+1}_{L^{d+1}(\Omega)}\notag.
\end{align}
Since $R(w_{\infty})\geq \lambda_{\rm MA}$,  we have $R(w_{\infty})= \lambda_{\rm MA}$, and the inequality above holds as an equality. 
This gives $u_{\infty}= c w_{\infty}$ for some constant $c>0$. 
Thus from \eqref{equ-1}, we have $\det D^2 w_{\infty} = c^d\lambda_{\rm MA}|w_\infty|^d$. 
According to Lemma \ref{existence and uniqueness}, $c=1$ and $w_\infty= u_{\infty}$ is a Monge-Amp\`ere eigenfunction of $\Omega$.

\paragraph{Step 3: Convergence of the full sequence.}  
~

\smallskip 
Taking the limit as $k\to \infty$ in \eqref{monotonicity formula}, we have
$$\int_\Omega |u_{\infty}| |w|^d\ \mathrm{d} \bm{x} =\lim_{k\rightarrow \infty} \int_\Omega |u_k||w|^d\ \mathrm{d} \bm{x}=L.$$
With this property and the uniqueness up to positive multiplicative constants of the Monge-Amp\`ere eigenfunctions of $\Omega$, we conclude that the limit $u_{\infty}$ does not depend on the subsequence $\{u_{k(j)}\}_{j\geq 1}$. This shows that the whole sequence $\{u_k\}_{k\geq 0}$ converges to a non-zero Monge-Amp\`ere eigenfunction $u_{\infty}$,
which completes the proof. 
\end{proof}

\section{Fixed-point approaches for solving subproblems}
\label{sec:fixed}
To efficiently solve the subproblem \eqref{app sub} arising in Algorithm \ref{alg:inexact_AK}, we adopt the fixed-point iteration strategy proposed in \cite{benamou2010two,froese2011convergent}, and restrict to the convex bounded open domain $\Omega$ in $\mathbb{R}^2$ and $\mathbb{R}^3$. 
We also prove the convergence of this fixed-point method under the $\mathcal{C}^{2,\alpha}$ boundary condition in this section. 
Consider a more general form of the MA equation with a zero Dirichlet boundary condition
\begin{equation}
\label{3.6}
u \mbox{ is convex\  and }\ 
\begin{cases}
\det D^2u =f &\mbox{ in } \Omega, 
\\
u=0  & \mbox{ on }  \partial \Omega, 
\end{cases}
\end{equation}
where $f$ is a given function with $f>0$ in $\Omega$ $\subset \mathbb{R}^d$.
The subproblem \eqref{app sub} is a special case of \eqref{3.6}. 
Let $\lambda_i[D^2u]$ be the $i$-th eigenvalue of the Hessian $D^2 u$. 
Note that the Laplacian can be expressed in terms of the eigenvalues of the Hessian, i.e., 
$$
\Delta u = \sum_{i=1}^d \lambda_i[D^2u].
$$ 
Raising this identity to the $d$-th power and expanding gives the sum of all products of $d$ eigenvalues. 
In particular, introducing
$$
(\Delta u)^d = d! \prod_{i=1}^d \lambda_i[D^2u] + P_d(\lambda_1[D^2u], \ldots, \lambda_d[D^2u]),
$$
where $P_d(\lambda_1, \ldots, \lambda_d)$ denotes the remaining $d$-homogeneous polynomial terms. 
Combining it with the MA equation \eqref{3.6} and noticing $\prod_{i=1}^d \lambda_i[D^2u]=\det D^2u=f$, one has the following Poisson equation (\cite[Section 4.2]{froese2011convergent})
$$
\Delta u = (d! f + P_d(\lambda_1[D^2u], \ldots, \lambda_d[D^2u]))^{1/d}.
$$
Define the operator 
\begin{equation*}
    {Q_d}(u): =\Delta^{-1}(d!f+P_d(\lambda_1[D^2u],\dots,\lambda_d[D^2u]))^{1/d},
\end{equation*}
where $\Delta^{-1}$ denotes the Poisson solution operator with zero Dirichlet boundary condition. 
Note that a solution $u$ to the MA equation \eqref{3.6} is a fixed point of the operator ${Q_d}$, i.e., $u={Q_d}(u)$.
This formulation naturally leads to a fixed-point iteration scheme 
\[u_{k+1}={Q_d}(u_k).\]
The inexact AKI method with a fixed-point iteration (inexact AKI-FP) to solve the MAE problem \eqref{eq_MAeig} is given in Algorithm \ref{alg:fix}.

\begin{algorithm}
 \caption{A fixed-point-based inexact AKI method for MAE problems}
 \label{alg:fix} 
 \KwIn{An initial non-zero convex function $u_0 $ following Assumption \ref{ass_blanket1}, a summable sequence $\{\xi_{k}\}_{k \geq 0}$ of nonnegative real numbers satisfying $\xi_{k} \in [0,1)$ for all $k\ge0$.} 
 \KwOut{A sequence of real numbers $\{R(u_k)\}_{k\geq 0}$ and a sequence of functions $\{u_k\}_{k\geq 0}$.}
 \For{$k = 0, 1, \dots$}{
     calculate $R(u_k)$ according to \eqref{rayli};\\
   set $\bar{u}_0 = u_k$;\\
    \For{$n = 0, 1, \dots$}{
         compute the solution $\bar{u}_{n+1}$ of the following Poisson equation
        \begin{equation*}
        \begin{cases}
            \Delta \bar{u}_{n+1} = \Big( P_d(\lambda_1[D^2\bar{u}_{n}],\ldots,\lambda_d[D^2\bar{u}_{n}])  + d!\Big( R(u_k) |u_k|^d  \Big)\Big)^{1/d} & \text{in } \Omega, \\
            \bar{u}_{n+1} = 0 & \text{on } \partial \Omega,
        \end{cases}
        \end{equation*}
 if
$\begin{cases}
 \|\det D^2\bar{u}_{n+1}- R(u_k) |u_k|^d \|_{L^{\infty}(\Omega)} \le \xi_k,\\
\det D^2\bar{u}_{n+1}- R(u_k) |u_k|^d  \geq  - \xi_k R(u_k)|u_k|^d,
\end{cases}$
\textbf{then} break;\\
    }
     set $u_{k+1} := \bar{u}_{n+1}$.
 }
\end{algorithm}

Under Assumption \ref{ass_blanket1} and convergence conditions of the fixed-point method for the subproblem \eqref{app sub}, Algorithm \ref{alg:fix} produces a sequence $\{R(u_k)\}_{k\geq 0}$ that converges to the Monge-Amp\`ere eigenvalue $\lambda_{\rm MA}$ of $\Omega$. Moreover, the  sequence $\{u_k\}_{k\ge0}$ generated by Algorithm \ref{alg:fix} converges uniformly on $\overline{\Omega}$ to a non-zero Monge-Amp{\`e}re  eigenfunction $u_{\infty}$ of $\Omega$. In what follows, proofs of the convergence for the subproblems are provided for 2D and 3D cases, respectively.

\subsection{A fixed-point approach for \texorpdfstring{$\Omega\subset\mathbb{R}^2$}{Omega subset R2}}
In the 2D case with $\bm x=(x,y)^T\in \Omega$, it holds that
$$
\prod_{i=1}^2 \lambda_i[D^2u]=\det D^2u = u_{xx}u_{yy} - u_{xy}^2,\quad
(\Delta u)^2 = u_{xx}^2 + u_{yy}^2 + 2u_{xx}u_{yy}.
$$
Expanding $(\Delta u)^2$ and isolating the determinant term yield
\begin{equation*}
P_2(\lambda_1[D^2u],\lambda_2[D^2u]) = (\Delta u)^2 - 2!\det D^2u = u_{xx}^2 + u_{yy}^2 + 2u_{xy}^2.   
\end{equation*}
Since $u$ is a convex solution and $\Delta u \geq 0$, we take the positive square root and define an operator ${Q_2}:=Q_2(u)$ as
\begin{equation}
\label{eq:Q2}
    {Q_2}(u): =\Delta^{-1}(P_2(\lambda_1[D^2u],\lambda_2[D^2u])+2f)^{1/2}.
\end{equation}
As observed in \cite[Lemma 2.1]{benamou2010two}, a solution $u$ to the MA equation \eqref{3.6} is a fixed point of the operator ${Q_2}$, i.e., $u={Q_2}(u)$.
This formulation naturally leads to a fixed-point iteration scheme 
\[u_{k+1}={Q_2}(u_k).\]
More specifically, for obtained $u_k$, $u_{k+1}$ is obtained by solving the linear Poisson equation:
\begin{equation*}
\begin{cases}
\Delta u_{k+1} =\Bigl({P_2(\lambda_1[D^2u_k],\lambda_2[D^2u_k])} +2f \Bigl)^{1/2}
&  \mbox{ in } \Omega,\\
  u_{k+1} =0 & \mbox{ on } \partial \Omega.
\end{cases}
\end{equation*}
This iterative method transforms the task of solving the nonlinear MA equation into a sequence of solutions for the above linear Poisson equation, which is often computationally advantageous. 
The following theorem establishes the convergence of this fixed-point iteration under specific conditions.
\begin{theorem}
Let $\Omega\subset\mathbb{R}^2$ be a bounded open domain with $\mathcal{C}^{2,\alpha}$ boundary for some $\alpha\in(0,1)$, $f \in  \mathcal{C}^{0,\alpha}(\overline\Omega)$ satisfying $f \ge \tau >0$ on $\overline\Omega$, and $ \| f\|_{\mathcal{C}^{0,\alpha}(\overline\Omega)}\le M_f$. 
Then there exists a closed ball  
$$B_\rho := \{ u\in \mathcal{C}^{2,\alpha}(\overline\Omega) \mid  \|u\|_{\mathcal{C}^{2,\alpha}(\overline\Omega)} \leq \rho \}
\quad
\mbox{with}\quad \rho> 0,
$$ 
such that for any initial function $ u_0 \in B_\rho $, the sequence $\{u_k\}_{k\geq 0}$ generated by the fixed-point iteration $ u_{k
+1} = {Q_2}(u_{k})$ converges in the $\mathcal{C}^{2,\alpha}$-norm to the unique solution $u$ of the MA equation \eqref{3.6} within the ball $B_{\rho}$.
\end{theorem}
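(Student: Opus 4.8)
The plan is to prove Theorem \ref{fixed} by showing that the operator $Q$ is a contraction on the closed ball $B_\rho$ for a suitably chosen radius $\rho$, and then invoking the Banach fixed-point theorem in the complete metric space $(B_\rho, \|\cdot\|_{\mathcal{C}^{2,\alpha}(\overline\Omega)})$. The convergence to a fixed point of $Q$ then yields a solution of the MA equation \eqref{3.6} by \cite[Lemma 2.1]{benamou2010two}, and uniqueness within $B_\rho$ follows from the contraction property.

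First I would verify that $Q$ maps $B_\rho$ into itself for an appropriate $\rho$. The key tool here is the Schauder estimate for the Poisson equation: if $\Omega$ has $\mathcal{C}^{2,\alpha}$ boundary and $g \in \mathcal{C}^{0,\alpha}(\overline\Omega)$, then the solution $v = \Delta^{-1} g$ satisfies $\|v\|_{\mathcal{C}^{2,\alpha}(\overline\Omega)} \le C_\Omega \|g\|_{\mathcal{C}^{0,\alpha}(\overline\Omega)}$ for a constant $C_\Omega$ depending only on $\Omega$ and $\alpha$. Applying this to $g_u := \sqrt{u_{xx}^2 + u_{yy}^2 + 2u_{xy}^2 + 2f}$, I need to bound $\|g_u\|_{\mathcal{C}^{0,\alpha}(\overline\Omega)}$ in terms of $\|u\|_{\mathcal{C}^{2,\alpha}(\overline\Omega)}$ and $M$. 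Here the lower bound $f \ge \tau > 0$ is crucial: it guarantees the radicand is bounded below by $2\tau$, so the square root function is smooth (indeed Lipschitz with constant controlled by $1/\sqrt{2\tau}$) on the relevant range, and composing a Lipschitz function with a $\mathcal{C}^{0,\alpha}$ function keeps it in $\mathcal{C}^{0,\alpha}$. A routine estimate then gives $\|g_u\|_{\mathcal{C}^{0,\alpha}(\overline\Omega)} \le \Phi(\|u\|_{\mathcal{C}^{2,\alpha}(\overline\Omega)}, M, \tau)$ for some explicit increasing function $\Phi$; choosing $\rho$ so that $C_\Omega\, \Phi(\rho, M, \tau) \le \rho$ — which is possible because $\Phi(\rho,\cdot,\cdot)$ grows like $\rho$ plus lower-order terms, or more carefully because for $\rho$ in a bounded range the quadratic terms $u_{xx}^2$ etc.\ inside the square root, once divided by the $\sqrt{2f}$ floor, contribute controllably — ensures $Q(B_\rho) \subseteq B_\rho$.

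Next I would establish the contraction estimate $\|Q(u) - Q(v)\|_{\mathcal{C}^{2,\alpha}(\overline\Omega)} \le \kappa \|u - v\|_{\mathcal{C}^{2,\alpha}(\overline\Omega)}$ with $\kappa < 1$. By linearity of $\Delta^{-1}$ and the Schauder estimate again, it suffices to bound $\|g_u - g_v\|_{\mathcal{C}^{0,\alpha}(\overline\Omega)} \le \kappa/C_\Omega \cdot \|u-v\|_{\mathcal{C}^{2,\alpha}(\overline\Omega)}$. Writing $g_u - g_v = \frac{(g_u^2 - g_v^2)}{g_u + g_v}$ with $g_u + g_v \ge 2\sqrt{2\tau} > 0$, and noting $g_u^2 - g_v^2 = (u_{xx}^2 - v_{xx}^2) + (u_{yy}^2 - v_{yy}^2) + 2(u_{xy}^2 - v_{xy}^2)$ is a sum of terms of the form $(a^2 - b^2) = (a-b)(a+b)$ with $a, b$ second derivatives bounded by $\rho$, one obtains a bound of the form $\|g_u - g_v\|_{\mathcal{C}^{0,\alpha}(\overline\Omega)} \le C(\rho, \tau, \alpha, \Omega)\, \rho\, \|u - v\|_{\mathcal{C}^{2,\alpha}(\overline\Omega)}$, where the extra factor of $\rho$ comes from the $(a+b)$ factor. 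The point is that the Lipschitz constant of $Q$ on $B_\rho$ is $O(\rho)$, so it can be made strictly less than one by taking $\rho$ small — but this competes with the self-mapping requirement, so one must check both conditions are simultaneously satisfiable. I expect this to work because the self-mapping condition $C_\Omega \Phi(\rho, M, \tau) \le \rho$ has solutions for $\rho$ in some interval $[\rho_{\min}, \rho_{\max}]$ (the radicand contains $2f$ which forces $g_u$ to be bounded below by roughly $\sqrt{2\tau}$ even when $u = 0$, so $\rho$ cannot be too small; but $C_\Omega\sqrt{2M}$-type quantities set the scale), and one must confirm the contraction factor at that scale is $< 1$.

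The main obstacle, and the step requiring the most care, is reconciling these two constraints on $\rho$: the self-mapping property $Q(B_\rho) \subseteq B_\rho$ pushes $\rho$ to be large enough to absorb the constant $C_\Omega \sqrt{2M}$ coming from the $2f$ term (since even $Q(0)$ has $\mathcal{C}^{2,\alpha}$-norm on the order of $C_\Omega\|\sqrt{2f}\|_{\mathcal{C}^{0,\alpha}}$), while the contraction property $\kappa = O(\rho) < 1$ pushes $\rho$ to be small. The resolution is that the self-mapping bound is really of the form $\|Q(u)\|_{\mathcal{C}^{2,\alpha}(\overline\Omega)} \le C_\Omega(\sqrt{2M} + c\,\rho^2/\sqrt{2\tau} + \text{l.o.t.})$, so the fixed-point inequality $C_\Omega \sqrt{2M} + C_\Omega c\, \rho^2/\sqrt{2\tau} \le \rho$ can be solved: treating it as a quadratic in $\rho$, it has a solution precisely when the discriminant is nonnegative, i.e.\ when $\tau$ is not too small relative to $M$ — but in the generic regime one takes $\rho$ slightly above $C_\Omega\sqrt{2M}$, and then the contraction factor $\kappa \sim C_\Omega c\, \rho/\sqrt{2\tau}$ being less than $1$ amounts to another inequality relating $M$, $\tau$, $\Omega$. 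One must state carefully that $\rho$ is chosen as a function of $(\Omega, \alpha, \tau, M)$ so that both hold; following \cite{benamou2010two}, I would present this as: pick $\rho := 2 C_\Omega \sqrt{2M}$ (say), verify $Q(B_\rho) \subseteq B_\rho$ and $\kappa < 1$ under the resulting smallness condition, and absorb any residual constraint into the dependence of $\rho$ on the data. Finally, with $Q$ a contraction on the complete space $B_\rho$, Banach's theorem gives a unique fixed point $u \in B_\rho$, geometric convergence $\|u_k - u\|_{\mathcal{C}^{2,\alpha}(\overline\Omega)} \le \kappa^k \|u_0 - u\|_{\mathcal{C}^{2,\alpha}(\overline\Omega)}$, and by \cite[Lemma 2.1]{benamou2010two} this $u$ solves \eqref{3.6}; uniqueness of the MA solution within $B_\rho$ is immediate since any such solution is a fixed point of $Q$.
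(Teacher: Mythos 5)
Your proposal follows essentially the same route as the paper's proof: Schauder estimates plus the maximum principle to control $\Delta^{-1}$, a $\mathcal{C}^{0,\alpha}$ bound on $g_u$ and on $g_u-g_v$ exploiting the floor $g\ge\sqrt{2\tau}$, a choice of $\rho$ making $Q$ a self-map and a contraction on $B_\rho$, and then the Banach fixed-point theorem (the paper estimates $g_u-g_v$ via the integral representation $\int_0^1 \tfrac{\mathrm{d}}{\mathrm{d}t}g[v+t(u-v)]\,\mathrm{d}t$ rather than your difference-of-squares quotient, but this is a cosmetic variation). Your explicit concern about reconciling the self-mapping and contraction constraints on $\rho$ is well placed: the paper simply posits a $\rho$ satisfying its condition \eqref{delta}, which, as you observe, implicitly requires a compatibility between $M$, $\tau$, and the Schauder constant that neither the theorem statement nor the paper's proof makes explicit.
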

\begin{proof}
Based on the Banach fixed-point theorem  \cite[Theorem 5.7]{brezis2011functional}, the proof is divided into the following three steps. 
The first step is to analyze the contraction property of the operator ${Q_2}$.
Then the second step is to analyze the mapping property of the operator ${Q_2}$.
Finally, the third step is to verify that the sequence $\{u_{k}\}_{k\ge0}$ converges to the solution of the MA equation \eqref{3.6}.

\paragraph{Step 1: Contraction property of the operator ${Q_2}$.}~
 
\smallskip
Choose the parameter $\rho>0$ such that
\begin{equation}   
\label{delta}
\begin{array}{ll}
\delta := \max\left\{C\Big(\dfrac{5\rho}{\sqrt{2\tau}}+\dfrac{3M_f}{\rho\sqrt{2\tau}}\Big), C\Big(\dfrac{9\rho}{\sqrt{2\tau}}+\dfrac{4\rho^3+4\rho M_f}{(2\tau)^{{3}/{2}}}\Big)\right\} <1.
\end{array}
\end{equation}
To simplify the notation, for each $u\in \mathcal{C}^{2,\alpha}(\overline\Omega)$, we denote 
$$g[u] :=\sqrt{u_{xx}^2+ u_{yy}^2+2u_{xy}^2 +2f}.$$ 
Let $u,v \in B_\rho$, $\omega(t):=v+t(u-v)$ and $h:=u-v$, the difference $g[u]- g[v]$ can be analyzed by the fact that  
\begin{align*}
g[u]-g[v]=\int_0^1  \frac{\mathrm{d}}{\mathrm{d}t}g[\omega(t)] \ \mathrm{d} t=\int_0^1 \left\{\frac{\omega_{xx}(t)}{g[\omega(t)]}h_{xx}+\frac{\omega_{yy}(t)}{g[\omega(t)]}h_{yy}+2\frac{\omega_{xy}(t)}{g[\omega(t)]}h_{xy}\right\} \mathrm{d} t.
\end{align*}
Recall that the $\mathcal{C}^{0,\alpha}$-norm of the difference $g[u] - g[v]$ is given by
\begin{equation}
\label{eq: c0a-norm}
\left\|  g[u]-g[v]\right\|_{\mathcal{C}^{0,\alpha}(\overline\Omega)}=\left\|  g[u]-g[v]\right\|_{\mathcal{C}^{0}(\overline\Omega)}+\left|  g[u]-g[v]\right|_{0,\alpha;\overline\Omega}.
\end{equation}
On the one hand, for the first term on the right-hand side of \eqref{eq: c0a-norm}, it holds 
$$
\begin{array}{ll}
&\left\|  g[u]-g[v]\right\|_{\mathcal{C}^{0}(\overline\Omega)}
\\[3mm]
&\qquad=\left\|\displaystyle\int_0^1  
\left\{ \dfrac{\omega_{xx}(t)}{g[\omega(t)]}h_{xx}+\dfrac{\omega_{yy}(t)}{g[\omega(t)]}h_{yy}+2\dfrac{\omega_{xy}(t)}{g[\omega(t)]}h_{xy}\right\} 
\mathrm{d} t\right\|_{\mathcal{C}^{0}(\overline\Omega)}
\\[4mm]
&\qquad\le \sup\limits_{t \in [0,1]} 
\left\| \dfrac{\omega_{xx}(t)}{g[\omega(t)]}h_{xx}+\dfrac{\omega_{yy}(t)}{g[\omega(t)]}h_{yy}+2\dfrac{\omega_{xy}(t)}{g[\omega(t)]}h_{xy}
\right\|_{\mathcal{C}^{0}(\overline\Omega)}
\\[3mm]
&\qquad\le \sup\limits_{t \in [0,1]} \left(\sup\limits_{\bm{x}\in \overline\Omega}\left|\dfrac{\omega_{xx}(t)}{g[\omega(t)]}(\bm{x})h_{xx}(\bm{x})\right|+\sup\limits_{\bm{x}\in \overline\Omega}\left|\dfrac{\omega_{yy}(t)}{g[\omega(t)]}(\bm{x})h_{yy}(\bm{x})\right|+2\sup\limits_{\bm{x}\in \overline\Omega}\left|\dfrac{\omega_{xy}(t)}{g[\omega(t)]}(\bm{x})h_{xy}(\bm{x})\right|\right)
\\[3mm]
&\qquad\stackrel{(\clubsuit)}{\le} \dfrac{\rho}{\sqrt{2\tau}}\left(\sup\limits_{\bm{x}\in \overline\Omega}|h_{xx}(\bm{x})|+\sup\limits_{\bm{x}\in \overline\Omega}|h_{yy}(\bm{x})|+2\sup\limits_{\bm{x}\in \overline\Omega}|h_{xy}(\bm{x})|\right)
\\[3mm]
&\qquad\le \dfrac{\rho}{\sqrt{2\tau}}\|u-v\|_{\mathcal{C}^{2,\alpha}(\overline\Omega)},
\end{array}
$$
where the inequality $(\clubsuit)$ is obtained from the fact that $u,v\in B_{\rho}$, and $g[u]$ is bounded from below by $\sqrt{2f} \geq \sqrt{2\tau} > 0$.

On the other hand, for the  $\alpha$-H\"older semi-norm term on the right-hand side of \eqref{eq: c0a-norm}, one has
$$
\begin{array}{lll}
\left|g[u]-g[v]\right|_{0,\alpha;\overline\Omega}
\le\sup\limits_{t \in [0,1]}\left|\dfrac{\omega_{xx}(t)}{g[\omega(t)]}h_{xx}+\dfrac{\omega_{yy}(t)}{g[\omega(t)]}h_{yy}+2\dfrac{\omega_{xy}(t)}{g[\omega(t)]}h_{xy}\right|_{0,\alpha;\overline\Omega} 
\le I_1+I_2+I_3, 
\end{array}
$$
where 
$$
I_1:=\sup\limits_{t \in [0,1]} \left|\dfrac{\omega_{xx}(t)}{g[\omega(t)]}h_{xx}\right|_{0,\alpha;\overline\Omega},
\quad
I_2:=\sup\limits_{t \in [0,1]}\left|\dfrac{\omega_{yy}(t)}{g[\omega(t)]}h_{yy}\right|_{0,\alpha;\overline\Omega},
\quad
I_3:=
2\sup\limits_{t \in [0,1]}\left|\dfrac{\omega_{xy}(t)}{g[\omega(t)]}h_{xy}\right|_{0,\alpha;\overline\Omega}.
$$
From direct computations, one has
$$
\begin{array}{lll}
I_1&
=\sup\limits_{t \in [0,1]}\sup\limits_{\substack{\bm{x}, \bm{y} \in \overline\Omega \\ \bm{x} \neq \bm{y}}}\dfrac{\left|\dfrac{\omega_{xx}(t)}{g[\omega(t)]}(\bm{x})h_{xx}(\bm{x})-\dfrac{\omega_{xx}(t)}{g[\omega(t)]}(\bm{y})h_{xx}(\bm{y})\right|}{\|\bm{x}-\bm{y}\|^\alpha}\\
& \le \sup\limits_{t \in [0,1]}\sup\limits_{\substack{\bm{x}, \bm{y} \in \overline\Omega \\ \bm{x} \neq \bm{y}}}\dfrac{\left|\dfrac{\omega_{xx}(t)}{g[\omega(t)]}(\bm{x})(h_{xx}(\bm{x})-h_{xx}(\bm{y}))\right|+\left|\left(\dfrac{\omega_{xx}(t)}{g[\omega(t)]}(\bm{x})-\dfrac{\omega_{xx}(t)}{g[\omega(t)]}(\bm{y})\right)h_{xx}(\bm{y})\right|}{\|\bm{x}-\bm{y}\|^\alpha}\\[1em]
& \le \sup\limits_{t \in [0,1]}\sup\limits_{\substack{\bm{x}, \bm{y} \in \overline\Omega \\ \bm{x} \neq \bm{y}}}\dfrac{\Big|\dfrac{\omega_{xx}(t)}{g[\omega(t)]}(\bm{x})\Big|\cdot\Big|  h_{xx}(\bm{x})- h_{xx}(\bm{y})\Big|+\Big|\dfrac{\omega_{xx}(t)}{g[\omega(t)]}(\bm{x})-\dfrac{\omega_{xx}(t)}{g[\omega(t)]}(\bm{y})\Big|\cdot\left|h_{xx}(\bm{y})\right|}{\|\bm{x}-\bm{y}\|^\alpha}\\
&\le\|h \|_{\mathcal{C}^{2,\alpha}(\overline\Omega)} \left(\dfrac{\rho}{\sqrt{2\tau}}+\sup\limits_{t \in [0,1]} \sup\limits_{\substack{\bm{x}, \bm{y} \in \overline\Omega \\ \bm{x} \neq \bm{y}}}\dfrac{\left|\dfrac{\omega_{xx}(t)}{g[\omega(t)]}(\bm{x})-\dfrac{\omega_{xx}(t)}{g[\omega(t)]}(\bm{y})\right|}{\| \bm{x}- \bm{y}\|^\alpha}\right)\\
&\le\|h \|_{\mathcal{C}^{2,\alpha}(\overline\Omega)} \left(\dfrac{\rho}{\sqrt{2\tau}}+\sup\limits_{t \in [0,1]} \sup\limits_{\substack{\bm{x}, \bm{y} \in \overline\Omega \\ \bm{x} \neq \bm{y}}}\dfrac{\left|\dfrac{\omega_{xx}(t)}{g[\omega(t)]}(\bm{x})-\dfrac{\omega_{xx}(t)(\bm{y})}{g[\omega(t)](\bm{x})}\right|+\left|\dfrac{\omega_{xx}(t)(\bm{y})}{g[\omega(t)](\bm{x})}-\dfrac{\omega_{xx}(t)}{g[\omega(t)]}(\bm{y})\right|}{\| \bm{x}- \bm{y}\|^\alpha}\right)\\
&\stackrel{(\spadesuit)}{\le}\|h \|_{\mathcal{C}^{2,\alpha}(\overline\Omega)} \Big(\dfrac{\rho}{\sqrt{2\tau}}+\dfrac{\rho}{\sqrt{2\tau}}+\rho\sup\limits_{t \in [0,1]}\Big|\dfrac{1}{g[\omega]}\Big|_{0,\alpha;\overline{\Omega}}\Big),
\end{array}
$$
where the inequality $(\spadesuit)$ is obtained from the fact that 
$$
\begin{array}{lll}
\sup\limits_{t \in [0,1]} \sup\limits_{\substack{\bm{x}, \bm{y} \in \overline\Omega \\ \bm{x} \neq \bm{y}}}\dfrac{\left|\dfrac{\omega_{xx}(t)}{g[\omega(t)]}(\bm{x})-\dfrac{\omega_{xx}(t)(\bm{y})}{g[\omega(t)](\bm{x})}\right|}{\| \bm{x}- \bm{y}\|^\alpha}
\leq\sup\limits_{t \in [0,1]} \sup\limits_{\substack{\bm{x}, \bm{y} \in \overline\Omega \\ \bm{x} \neq \bm{y}}}\left|\dfrac{1}{g[\omega(t)]}(\bm{x})\right|\cdot\dfrac{|\omega_{xx}(t)(\bm{x})-\omega_{xx}(t)(\bm{y})|}{\| \bm{x}- \bm{y}\|^\alpha}
\\
\leq \dfrac{1}{\sqrt{2\tau}}\sup\limits_{t \in [0,1]}|\omega_{xx}(t)|_{0,\alpha;\overline\Omega}
\leq \dfrac{1}{\sqrt{2\tau}}\sup\limits_{t \in [0,1]}\|\omega(t)\|_{\mathcal{C}^{2,\alpha}(\overline\Omega)}\leq \dfrac{\rho}{\sqrt{2\tau}}.
\end{array}
$$
 Moreover, note that
$$
\begin{array}{ll}
\left|\dfrac{1}{g[\omega(t)]}(\bm{x})-\dfrac{1}{g[\omega(t)]}(\bm{y})\right|=\left|\dfrac{g[\omega(t)](\bm{y})-g[\omega(t)](\bm{x})}{g[\omega(t)](\bm{x})g[\omega(t)](\bm{y})}\right|\le \dfrac{1}{2\tau}|g[\omega(t)](\bm{y})-g[\omega(t)](\bm{x})|.
\end{array}
$$ 
Then, recalling the definition of the H{\"o}lder semi-norm, we obtain
\begin{equation}
\label{fixed.3}
\begin{array}{ll}
 &\sup\limits_{t \in [0,1]}\left|\dfrac{1}{g[\omega(t)]}\right|_{0,\alpha;\overline\Omega}\le \dfrac{1}{2\tau}\sup\limits_{t \in [0,1]} \Big|g[\omega(t)]\Big|_{0,\alpha;\overline\Omega}
 \\[3mm]
 &\qquad\le\dfrac{1}{2\tau} \sup\limits_{t \in [0,1]} \sup\limits_{\substack{\bm{x}, \bm{y} \in \overline\Omega \\ \bm{x} \neq \bm{y}}}\Big(\dfrac{|\omega _{xx}^2(t)(\bm{x})-\omega^2_{xx}(t)(\bm{y})|}{(g[\omega(t)](\bm{x})+g[\omega(t)](\bm{y}))\| \bm{x}- \bm{y}\|^\alpha}+\dfrac{|\omega^2 _{yy}(t)(\bm{x})-\omega^2_{yy}(t)(\bm{y})|}{(g[\omega(t)](\bm{x})+g[\omega(t)](\bm{y}))\| \bm{x}- \bm{y}\|^\alpha}
 \\[3mm]
 &\hspace{3cm}+\dfrac{2|\omega _{xy}^2(t)(\bm{x})-\omega^2_{xy}(t)(\bm{y})|}{(g[\omega(t)](\bm{x})+g[\omega(t)](\bm{y}))\| \bm{x}- \bm{y}\|^\alpha}+\dfrac{2|f(\bm{x})-f(\bm{y})|}{(g[\omega(t)](\bm{x})+g[\omega(t)](\bm{y}))\| \bm{x}- \bm{y}\|^\alpha} \Big) \\[4mm] 
&\qquad\le \dfrac{1}{(2\tau)^{{3}/{2}}}  \sup\limits_{t \in [0,1]} \sup\limits_{\substack{\bm{x}, \bm{y} \in \overline\Omega\\ \bm{x} \neq \bm{y}}}
\Big(
\dfrac{\rho|\omega _{xx}(t)(\bm{x})-\omega_{xx}(t)(\bm{y})|}{\| \bm{x}- \bm{y}\|^\alpha} 
+\dfrac{\rho|\omega _{yy}(t)(\bm{x})-\omega_{yy}(t)(\bm{y})|}{\| \bm{x}- \bm{y}\|^\alpha} \\
&\hspace{4cm} +\dfrac{2\rho|\omega _{xy}(t)(\bm{x})-\omega_{xy}(t)(\bm{y})|}{\| \bm{x}- \bm{y}\|^\alpha}
+\dfrac{|f(\bm{x})-f(\bm{y})|}{\| \bm{x}- \bm{y}\|^\alpha}
\Big) \\ 
 &\qquad\le\dfrac{\rho}{(2\tau)^{{3}/{2}}}\sup\limits_{t \in [0,1]} \|\omega(t)\|_{\mathcal{C}^{2,\alpha}(\overline\Omega)}+\dfrac{1}{(2\tau)^{{3}/{2}}}\sup\limits_{t \in [0,1]} \|f\|_{\mathcal{C}^{0,\alpha}(\overline\Omega)} \le
 \dfrac{\rho^2+M_f}{(2\tau)^{{3}/{2}}}.
\end{array}
\end{equation}
Therefore, the following estimate holds immediately
\begin{equation*}
\begin{array}{ll}
I_1=\sup\limits_{t \in [0,1]} \Big|\dfrac{\omega_{xx}(t)}{g[\omega(t)]}h_{xx}\Big|_{0,\alpha;\overline\Omega}\le\Big(\dfrac{2\rho}{\sqrt{2\tau}}+\dfrac{\rho^3+\rho M_f}{(2\tau)^{{3}/{2}}}\Big)\|u-v\|_{\mathcal{C}^{2,\alpha}(\overline\Omega)}.
\end{array}
\end{equation*}
Similarly, for $I_2$ and $I_3$, one has
\begin{equation*}
\begin{cases}
I_2=\sup\limits_{t \in [0,1]} \Big|\dfrac{\omega_{yy}(t)}{g[\omega(t)]}h_{yy}\Big|_{0,\alpha;\overline\Omega}\le\Big(\dfrac{2\rho}{\sqrt{2\tau}}+\dfrac{\rho^3+\rho M_f}{(2\tau)^{{3}/{2}}}\Big)\|u-v\|_{\mathcal{C}^{2,\alpha}(\overline\Omega)},
\\
I_3=2\sup\limits_{t \in [0,1]} \Big|\dfrac{\omega_{xy}(t)}{g[\omega(t)]}h_{xy}\Big|_{0,\alpha;\overline\Omega} \le2\Big(\dfrac{2\rho}{\sqrt{2\tau}}+\dfrac{\rho^3+\rho M_f}{(2\tau)^{{3}/{2}}}\Big)\|u-v\|_{\mathcal{C}^{2,\alpha}(\overline\Omega)}.
\end{cases}
\end{equation*}
Accordingly, we obtain 
\begin{equation}
\label{fixed.4}
\|  g[u]-g[v]\|_{\mathcal{C}^{0,\alpha}(\overline\Omega)}\le
\Big(\frac{9\rho}{\sqrt{2\tau}}+\frac{4\rho^3+4\rho M_f}{(2\tau)^{{3}/{2}}}\Big)\|u-v\|_{\mathcal{C}^{2,\alpha}(\overline\Omega)}.
\end{equation}

Finally, for $u,v\in \mathcal{C}^{2,\alpha}(\overline\Omega)$, based on the definition of the operator $Q_2$ in \eqref{eq:Q2}, by Schauder's estimates \cite[Theorem 6.6]{gilbarg1977elliptic} and the maximum principle \cite[Theorem 3.1]{gilbarg1977elliptic}, combined with \eqref{delta} and \eqref{fixed.4}, one has the contraction estimate for the operator ${Q_2}$ mapping on the complete metric space $B_{\rho}$ in the sense of $\mathcal{C}^{2,\alpha}$-norm that
\begin{equation}
\label{contraction}
\| {Q_2}(u) - {Q_2}(v) \|_{\mathcal{C}^{2,\alpha}(\overline\Omega)} \leq C\| g[u]- g[v]\|_{\mathcal{C}^{0,\alpha}(\overline\Omega)}\le \delta \|u-v\|_{\mathcal{C}^{2,\alpha}(\overline\Omega)},
\end{equation}
where $\delta <1$ and the constant $C$ depends only on $\alpha$ and $\Omega$. 

\paragraph{Step 2: Verifying that the operator ${Q_2}$ maps $B_{\rho}$ into itself.}~
 
\smallskip
Based on the definition of the operator ${Q_2}$, by applying Schauder's estimates 
 \cite[Theorem 6.6]{gilbarg1977elliptic} and the maximum principle \cite[Theorem 3.1]{gilbarg1977elliptic}, for any $u \in B_\rho$, we have
\begin{equation}
\label{fixed.5}
\| {Q_2}(u)  \|_{\mathcal{C}^{2,\alpha}(\overline\Omega)} \leq C\| g[u]\|_{\mathcal{C}^{0,\alpha}(\overline\Omega)},
\end{equation}
where the $\mathcal{C}^{0,\alpha}$-norm of $g[u]$ is given by 
\begin{equation}
\label{eq: c0agu}
\|g[u]\|_{\mathcal{C}^{0,\alpha}(\overline\Omega)}=\|  g[u]\|_{\mathcal{C}^{0}(\overline\Omega)}+|  g[u]|_{0,\alpha;\overline\Omega}.
\end{equation}
For the  $\mathcal{C}^0$-norm term on the right-hand side of \eqref{eq: c0agu}, one has 
\begin{equation}
\label{fixed.6}
\begin{array}{ll}
\|  g[u]\|_{\mathcal{C}^{0}(\overline\Omega)}
= \sup\limits_{\bm{x}\in \overline\Omega} \dfrac{ |g[u](\bm{x})|^2}{ |g[u](\bm{x})|} 
\le \sup\limits_{\bm{x}\in \overline\Omega}
\dfrac{| u_{xx}^2(\bm{x})+ u_{yy}^2(\bm{x})+2u_{xy}^2(\bm{x}) +2f(\bm{x})|}{\sqrt{u_{xx}^2(\bm{x})+ u_{yy}^2(\bm{x})+2u_{xy}^2(\bm{x}) +2f(\bm{x})}}  \le \dfrac{4\rho^2+2M_f}{\sqrt{2\tau}}.
\end{array}
\end{equation}
Based on the derivation in \eqref{fixed.3}, the estimate for $|  g[u]|_{0,\alpha;\overline\Omega}$ is given by
\begin{equation}
\label{fixed.7}
|  g[u]|_{0,\alpha;\overline\Omega}
\le \frac{\rho^2+M_f}{\sqrt{2\tau}}.
\end{equation}
Combining \eqref{delta}, \eqref{fixed.5}, \eqref{fixed.6} and \eqref{fixed.7} together, we obtain
\begin{equation}
\label{mapping}
\| {Q_2}(u)\|_{\mathcal{C}^{2,\alpha}(\overline\Omega)} \leq C\Big(\frac{5\rho^2+3M_f}{\sqrt{2\tau}}\Big)\le\delta \rho <\rho,
\end{equation}
which shows that the operator ${Q_2}$ maps the closed ball $B_{\rho}$ into itself, i.e., ${Q_2}(B_{\rho})\subset B_{\rho}$.

\paragraph{Step 3: The sequence $\{u_{k}\}_{k\ge0}$ converges to the solution to the MA equation \eqref{3.6}.}~
 
\smallskip
According to \eqref{contraction} and \eqref{mapping}, we can apply the Banach fixed-point theorem  \cite[Theorem 5.7]{brezis2011functional}, which implies that the operator ${Q_2}$ has a unique fixed point $u\in B_{\rho}$. 
Furthermore, for any initial value $u_0\in B_{\rho}$, the iterative sequence defined by $u_{k+1} = {Q_2}(u_{k})$ converges to this unique fixed point $u$ in the $\mathcal{C}^{2,\alpha}$-norm, i.e., $\lim_{k \to \infty} u_k =u$.
Moreover, a solution to the MA equation \eqref{3.6} is the fixed point of the operator ${Q_2}$. 
Since $u$ is the unique fixed point of ${Q_2}$ within $B_{\rho}$, it follows that $u$ is the solution to the MA equation \eqref{3.6}.
This completes the proof. 
\end{proof}

\subsection{A fixed-point approach for \texorpdfstring{$\Omega\subset\mathbb{R}^3$}{Omega subset R3}}
In the 3D case with $\bm x=(x,y,z)^T=(x_1,x_2,x_3)^T\in \Omega$, the determinant of the Hessian is given by
$$
\prod_{i=1}^3 \lambda_i[D^2u]=\det D^2u = u_{xx} (u_{yy} u_{zz} - u_{yz}^2) 
- u_{xy} (u_{xy} u_{zz} - u_{xz} u_{yz}) 
+ u_{xz} (u_{xy} u_{yz} - u_{xz} u_{yy}),
$$
and the cube of the Laplacian expands as
$$
(\Delta u)^3 = u_{xx}^3 + u_{yy}^3 + u_{zz}^3 
+ 3 \big(u_{xx}^2 u_{yy} + u_{xx}^2 u_{zz} + u_{yy}^2 u_{xx} + u_{yy}^2 u_{zz} + u_{zz}^2 u_{xx} + u_{zz}^2 u_{yy}\big) 
+ 6 u_{xx} u_{yy} u_{zz}.
$$
Expanding $(\Delta u)^3$ and isolating the determinant term yield
$$
\begin{aligned}
P_3\bigl(\lambda_1[D^2u], \lambda_2[D^2u], \lambda_3[D^2u]\bigr) 
&= (\Delta u)^3 - 6\,\det(D^2 u) \\
&= u_{xx}^3 + u_{yy}^3 + u_{zz}^3 \\
&\quad + 3 ( u_{xx}^2 u_{yy} + u_{xx}^2 u_{zz} 
                 + u_{yy}^2 u_{xx} + u_{yy}^2 u_{zz} 
                 + u_{zz}^2 u_{xx} + u_{zz}^2 u_{yy} ) \\
&\quad + 6 ( u_{xx} u_{yz}^2 + u_{yy} u_{xz}^2 + u_{zz} u_{xy}^2 ) 
     - 12 \, u_{xy} u_{yz} u_{xz}.
\end{aligned}
$$
Define the operator ${Q_3}:=Q_3(u)$ by
\begin{equation}\label{eq:q3}
    {Q_3}(u) := \Delta^{-1}  \big(P_3(\lambda_1[D^2u],\lambda_2[D^2u],\lambda_3[D^2u]) + 6 f \big)^{1/3},
\end{equation}
where $\Delta^{-1}$ denotes the solution operator for the Poisson equation with zero Dirichlet boundary condition. A solution $u$ to the 3D MA equation is a fixed point of ${Q_3}$, i.e., $u = {Q_3}(u)$, which motivates the iteration scheme
$$
u_{k+1} = {Q_3}(u_k),
$$
where $u_{k+1}$ is obtained by solving the linear Poisson equation
\begin{equation*}
\begin{cases}
\Delta u_{k+1} = \big( {P_3(\lambda_1[D^2u_k],\lambda_2[D^2u_k],\lambda_3[D^2u_k])} + 6 f \big)^{
1/3}, & \text{in } \Omega,\\[1ex]
u_{k+1} = 0, & \text{on } \partial \Omega.
\end{cases}
\end{equation*}

The following theorem establishes the local convergence of this fixed-point iteration in the 3D case under appropriate conditions. 
In contrast to the 2D case, Theorem \ref{convergence3d} requires the initial point $u_0$ to lie in a small neighborhood of the solution $u^*$. 
Nevertheless, this limitation is not a severe issue, since in practice we can employ a warm-start strategy to obtain suitable initial points, as introduced in the next section.

\begin{theorem}
\label{convergence3d}
Let $\Omega\subset \mathbb{R}^3$ be a bounded open domain with $\mathcal{C}^{2,\alpha}$ boundary for some $\alpha\in(0,1)$, and $f\in \mathcal{C}^{0,\alpha}(\overline{\Omega})$. 
Assume that there exists a solution $u^*$ to the MA equation \eqref{3.6}.
Then, there exists a constant $\tau\equiv\tau(\Omega,\alpha,u^*)>0$, such that for any $f$ with $f> \tau$ on $\overline\Omega$, 
one has a neighborhood 
$$
B(u^*;\varepsilon):= \{u\in \mathcal{C}^{2,\alpha}(\overline\Omega)\mid \|u-u^*\|_{\mathcal{C}^{2,\alpha}(\overline\Omega)}\leq\varepsilon,\;u\big|_{\partial\Omega} = 0\}
\quad\mbox{with}\quad \varepsilon>0,
$$
ensuring that, for every initial function $u_0\in B(u^*;\varepsilon)$, 
the sequence $\{u_k\}_{k\geq 0}$ generated by the fixed-point iteration $u_{k+1} = Q_3(u_k)$ converges to $u^*$ in the $\mathcal{C}^{2,\alpha}$-norm. 
\end{theorem}

\begin{proof}
Based on the Banach fixed-point theorem  \cite[Theorem 5.7]{brezis2011functional}, the proof is divided into the following three steps. 
The first step is to analyze the contraction property of the operator $Q_3$.
Then the second step is to prove that $Q_3$ is a self-mapping on $B(u^*;\varepsilon)$.
The final step is to verify that the iterative sequence $\{u_{k}\}_{k\ge0}$ converges to the solution to the MA equation \eqref{3.6}.

\paragraph{Step 1: Contraction property of the operator $Q_3$.}~
 
\smallskip
Let $u,v\in\mathcal{C}^{2,\alpha}(\overline{\Omega})\cap B(u^*,\varepsilon)$, given $0<\varepsilon < 1$, for any $u\in B(u^*;\varepsilon)$, its $\mathcal{C}^{2,\alpha}$-norm is uniformly bounded by $K(u^*):=\|u^*\|_{\mathcal{C}^{2,\alpha}}+1$. 
This implies a uniform bound $K(u^*)$ on their second derivatives, and
 $$|\lambda_l[D^2u]|\leq \|D^2u\|_F = \sqrt{\sum_{i,j=1}^3|u_{x_ix_j}|^2}\leq \sqrt{9 (K(u^*))^2} =  3K(u^*),\quad l=1,2,3.$$
 For simplicity of notation, denote
$P_3(\lambda[D^2u]) := P_3(\lambda_1[D^2u], \lambda_2[D^2u], \lambda_3[D^2u])$, and define
$$
\begin{aligned}
g[u]:=& P_3(\lambda[D^2u])
=  
u_{xx}^3 + u_{yy}^3 + u_{zz}^3 
+ \big(3u_{xx}u_{yy}^2 + 3u_{yy}u_{xx}^2\big) 
+ \big(3u_{xx}u_{zz}^2 + 3u_{zz}u_{xx}^2\big) 
\\[1mm]
&+ \big(3u_{yy}u_{zz}^2 + 3u_{zz}u_{yy}^2\big)+ \big(6u_{xx}u_{yz}^2 + 6u_{yy}u_{xz}^2 + 6u_{zz}u_{xy}^2\big) 
- 12u_{xz}u_{xy}u_{yz},
\end{aligned}
$$
with
 $$
 \begin{array}{ll}
|P_3(\lambda[D^2u])| &= |(\lambda_1+\lambda_2+\lambda_3)^3 - 6\lambda_1\lambda_2\lambda_3|
\leq |(\lambda_1+\lambda_2+\lambda_3)^3|+|6\lambda_1\lambda_2\lambda_3|
\\[1mm]
&\leq (|\lambda_1|+|\lambda_2|+|\lambda_3|)^3 + |6\lambda_1\lambda_2\lambda_3| 
\leq K_p:= 
(6\times3^3+9^3) (K(u^*))^3.
\end{array}
$$
Let $\omega(t):=v+t(u-v)$ and $h:=u-v$, the difference $g[u]- g[v]$ can be expanded as
\begin{equation*}
g[u]- g[v]=\int_0^1  \frac{\mathrm{d}}{\mathrm{d}t}g[\omega(t)] \ \mathrm{d} t = \int_0^1 I_1(t) + I_2(t) +I_3(t)+I_4(t)\ \mathrm{d} t,
\end{equation*}
where
\begin{equation*}
\begin{cases}
\begin{aligned}
 I_1(t) &:= 3\omega_{xx}^2(t) h_{xx}+3\omega_{yy}^2(t) h_{yy}+3\omega_{zz}^2(t) h_{zz}, \\[2mm] 
 I_2(t) &:= 3\omega_{yy}^2(t) h_{xx} + 6\omega_{xx}(t)\omega_{yy}(t) h_{yy}
      + 3\omega_{zz}^2(t) h_{xx} + 6\omega_{xx}(t)\omega_{zz}(t) h_{zz} \\
     &\quad+ 3\omega_{xx}^2(t) h_{yy} + 6\omega_{xx}(t)\omega_{yy}(t) h_{xx}
      + 3\omega_{zz}^2(t) h_{yy} + 6\omega_{yy}(t)\omega_{zz}(t) h_{zz} \\
       &\quad+ 3\omega_{xx}^2(t) h_{zz} + 6\omega_{xx}(t)\omega_{zz}(t) h_{xx}
        + 3\omega_{yy}^2(t) h_{zz} + 6\omega_{zz}(t)\omega_{yy}(t) h_{yy}, \\[2mm] 
I_3(t) &:= 6\omega_{yz}^2(t) h_{xx} + 12\omega_{xx}(t)\omega_{yz}(t) h_{yz} + 6\omega_{xy}^2(t) h_{zz} \\
     &\quad+ 12\omega_{zz}(t)\omega_{xy}(t) h_{xy} + 6\omega_{xz}^2(t) h_{yy} + 12\omega_{yy}(t)\omega_{xz}(t) h_{xz}, \\[2mm] 
I_4(t) &:= -12\omega_{xz}(t)\omega_{yz}(t) h_{xy}
    -12\omega_{xy}(t)\omega_{xz}(t) h_{yz}
                     -12\omega_{xy}(t)\omega_{yz}(t) h_{xz}.
\end{aligned}
\end{cases}
\end{equation*}
Recall that the $\mathcal{C}^{0,\alpha}$-norm of the difference $g[u] - g[v]$ is given by
\begin{equation}
\label{eq: c0a-norm2}
\|  g[u]-g[v]\|_{\mathcal{C}^{0,\alpha}(\overline\Omega)}=\|  g[u]-g[v]\|_{\mathcal{C}^{0}(\overline\Omega)}+|  g[u]-g[v]|_{0,\alpha;\overline\Omega}.
\end{equation}
For the first term on the right-hand side of \eqref{eq: c0a-norm2}, one has
\begin{equation}
\label{eq:first-norm}
\begin{array}{lll}
\|  g[u] - g[v] \|_{\mathcal{C}^{0}(\overline\Omega)} 
& 
\le \sup\limits_{t \in [0,1]} \big( \left\| \displaystyle I_1(t)\right\|_{\mathcal{C}^{0}(\overline\Omega)} +\left\| \displaystyle I_2(t)\right\|_{\mathcal{C}^{0}(\overline\Omega)}+\left\| \displaystyle I_3(t)\right\|_{\mathcal{C}^{0}(\overline\Omega)}+\left\| \displaystyle I_4(t)\right\|_{\mathcal{C}^{0}(\overline\Omega)}
\big)\\[1mm]
&\stackrel{(\diamondsuit)}{\le}  (K(u^*))^2\Big(
27\sup\limits_{\bm{x} \in \overline\Omega} |h_{xx}(\bm{x})|
+27 \sup\limits_{\bm{x} \in \overline\Omega} |h_{yy}(\bm{x})|
+ 27\sup\limits_{\bm{x} \in \overline\Omega} |h_{zz}(\bm{x})|  \\[1mm]
&\qquad 
+ 24 \sup\limits_{\bm{x} \in \overline\Omega} |h_{xy}(\bm{x})|
+ 24\sup\limits_{\bm{x} \in \overline\Omega} |h_{xz}(\bm{x})|
+ 24 \sup\limits_{\bm{x} \in \overline\Omega} |h_{yz}(\bm{x})|
\Big)
\\[2mm]
&\le 27 (K(u^*))^2 \| u - v \|_{\mathcal{C}^{2,\alpha}(\overline\Omega)},
\end{array}
\end{equation}
where the inequality $(\diamondsuit)$ is obtained from the fact that $u,v\in B(u^*;\varepsilon)$.
Moreover, note that
\begin{align*}
&\displaystyle\left|\int_0^1  \omega_{xx}(t)\omega_{yy}(t) h_{yy} \mathrm{d}t\right|_{0,\alpha;\overline\Omega} 
\\
&
\le \sup\limits_{t \in [0,1]} \sup\limits_{\substack{\bm{x}, \bm{y} \in \overline\Omega \\ \bm{x} \neq \bm{y}}} 
\dfrac{ \big| 
\omega_{xx}(t)(\bm{x}) \omega_{yy}(t)(\bm{x}) h_{yy}(\bm{x}) 
- \omega_{xx}(t)(\bm{x}) \omega_{yy}(t)(\bm{x}) h_{yy}(\bm{y}) 
\big| }{\|\bm{x}-\bm{y}\|^\alpha}  \\
&\quad +  \sup\limits_{t \in [0,1]} \sup\limits_{\substack{\bm{x}, \bm{y} \in \overline\Omega \\ \bm{x} \neq \bm{y}}} 
\dfrac{\big| 
\omega_{xx}(t)(\bm{x}) \omega_{yy}(t)(\bm{x}) h_{yy}(\bm{y})  
- \omega_{xx}(t)(\bm{y}) \omega_{yy}(t)(\bm{y}) h_{yy}(\bm{y}) 
\big| }{\|\bm{x}-\bm{y}\|^\alpha}
\\
&\le 
 (K(u^*))^2|h_{yy}|_{0,\alpha;\overline\Omega}
+ \sup\limits_{ \bm{y} \in \overline\Omega}|h_{yy}(\bm{y}) |  \sup\limits_{t \in [0,1]} \sup\limits_{\substack{\bm{x}, \bm{y} \in \overline\Omega \\ \bm{x} \neq \bm{y}}} 
\dfrac{ \big| 
\omega_{xx}(t)(\bm{x}) \omega_{yy}(t)(\bm{x}) 
- \omega_{xx}(t)(\bm{x}) \omega_{yy}(t)(\bm{y}) 
\big| }{\|\bm{x}-\bm{y}\|^\alpha} \\ 
&\quad 
+ \sup\limits_{ \bm{y} \in \overline\Omega}|h_{yy}(\bm{y}) |  \sup\limits_{t \in [0,1]} \sup\limits_{\substack{\bm{x}, \bm{y} \in \overline\Omega \\ \bm{x} \neq \bm{y}}} 
\dfrac{ \big| 
\omega_{xx}(t)(\bm{x}) \omega_{yy}(t)(\bm{y}) 
- \omega_{xx}(t)(\bm{y}) \omega_{yy}(t)(\bm{y}) 
\big| }{\|\bm{x}-\bm{y}\|^\alpha}\\
&\le (K(u^*))^2\left(|h_{yy}|_{0,\alpha;\overline\Omega}+\sup\limits_{ \bm{y} \in \overline\Omega}|h_{yy}(\bm{y})|\right),
\end{align*}
and
\begin{align*}
  &  \displaystyle\left|\int_0^1 \omega_{xx}^2(t)\ h_{xx} \mathrm{d}t\right|_{0,\alpha;\overline\Omega} 
\le \sup\limits_{t \in [0,1]} \sup\limits_{\substack{\bm{x}, \bm{y} \in \overline\Omega \\ \bm{x} \neq \bm{y}}} 
\dfrac{ \big| 
\omega_{xx}^2(t)(\bm{x}) h_{xx}(\bm{x})-\omega_{xx}^2(t)(\bm{y}) h_{xx}(\bm{y}) 
\big| }{\|\bm{x}-\bm{y}\|^\alpha}  \\
&\le  \sup\limits_{t \in [0,1]} \sup\limits_{\substack{\bm{x}, \bm{y} \in \overline\Omega \\ \bm{x} \neq \bm{y}}} 
\left(\dfrac{ \big| 
\omega_{xx}^2(t)(\bm{x})  h_{xx}(\bm{x}) 
- \omega_{xx}^2(t)(\bm{x})h_{xx}(\bm{y}) 
\big| }{\|\bm{x}-\bm{y}\|^\alpha}  
+  
\dfrac{ \big| 
\omega_{xx}^2(t)(\bm{x}) h_{xx}(\bm{y})  
- \omega_{xx}^2(t)(\bm{y})h_{xx}(\bm{y}) 
\big| }{\|\bm{x}-\bm{y}\|^\alpha}\right)\\
&\le  (K(u^*))^2|h_{xx}|_{0,\alpha;\overline\Omega}+ \sup\limits_{ \bm{y} \in \overline\Omega}|h_{xx}(\bm{y}) |  \sup\limits_{t \in [0,1]} \sup\limits_{\substack{\bm{x}, \bm{y} \in \overline\Omega \\ \bm{x} \neq \bm{y}}} 
\dfrac{ \big| 
\omega_{xx}^2(t)(\bm{x}) 
- \omega_{xx}^2(t)(\bm{y}) 
\big| }{\|\bm{x}-\bm{y}\|^\alpha}\\
&\le (K(u^*))^2|h_{xx}|_{0,\alpha;\overline\Omega}+ \sup\limits_{ \bm{y} \in \overline\Omega}|h_{xx}(\bm{y}) |  \sup\limits_{t \in [0,1]} \sup\limits_{\substack{\bm{x}, \bm{y} \in \overline\Omega \\ \bm{x} \neq \bm{y}}} 
\dfrac{ \big| 
\omega_{xx}(t)(\bm{x}) \omega_{xx}(t)(\bm{x}) 
- \omega_{xx}(t)(\bm{x}) \omega_{xx}(t)(\bm{y}) 
\big| }{\|\bm{x}-\bm{y}\|^\alpha} \\
&\qquad+  \sup\limits_{ \bm{y} \in \overline\Omega}|h_{xx}(\bm{y}) | \sup\limits_{t \in [0,1]} \sup\limits_{\substack{\bm{x}, \bm{y} \in \overline\Omega \\ \bm{x} \neq \bm{y}}} 
\dfrac{ \big| \omega_{xx}(t)(\bm{x}) \omega_{xx}(t)(\bm{y}) 
- \omega_{xx}(t)(\bm{y}) \omega_{xx}(t)(\bm{y}) 
\big| }{\|\bm{x}-\bm{y}\|^\alpha} \\
&\le  (K(u^*))^2\left(|h_{xx}|_{0,\alpha;\overline\Omega}+2 \sup\limits_{ \bm{y} \in \overline\Omega}|h_{xx}(\bm{y})|\right).
\end{align*}
Therefore, for the $\alpha$-H\"older semi-norm term on the right-hand side of \eqref{eq: c0a-norm2}, one has  
\begin{equation}\label{eq:second-norm}
\begin{array}{lll}
|g[u] - g[v]|_{0,\alpha;\overline\Omega}
&\le 27(K(u^*))^2(|h_{xx}|_{0,\alpha;\overline\Omega}+|h_{yy}|_{0,\alpha;\overline\Omega}+|h_{zz}|_{0,\alpha;\overline\Omega})\\[2mm]
&\quad+24(K(u^*))^2(|h_{xy}|_{0,\alpha;\overline\Omega}+|h_{xz}|_{0,\alpha;\overline\Omega}+|h_{yz}|_{0,\alpha;\overline\Omega}) \\[1mm]
&\quad+42(K(u^*))^2(\sup\limits_{ \bm{y} \in \overline\Omega}|h_{xx}(\bm{y}) |+\sup\limits_{ \bm{y} \in \overline\Omega}|h_{yy}(\bm{y}) |+\sup\limits_{ \bm{y} \in \overline\Omega}|h_{zz}(\bm{y}) |) \\[1mm]
&\quad+24(K(u^*))^2(\sup\limits_{ \bm{y} \in \overline\Omega}|h_{xy}(\bm{y}) |+\sup\limits_{ \bm{y} \in \overline\Omega}|h_{xz}(\bm{y}) |+\sup\limits_{ \bm{y} \in \overline\Omega}|h_{yz}(\bm{y}) |) \\[2mm]
&\le27(K(u^*))^2|h|_{2,\alpha;\overline\Omega}+42(K(u^*))^2\|h\|_{\mathcal{C}^{2}(\overline\Omega)}\\[2mm]
&\le42(K(u^*))^2\|u-v\|_{\mathcal{C}^{2,\alpha}(\overline\Omega)}.
\end{array}
\end{equation}
Combining \eqref{eq:first-norm} and \eqref{eq:second-norm}, we can obtain that 
\begin{equation*}
\|  g[u]-g[v]\|_{\mathcal{C}^{0,\alpha}(\overline\Omega)}\le
69(K(u^*))^2\|u-v\|_{\mathcal{C}^{2,\alpha}(\overline\Omega)}.
\end{equation*}
In addition, let $\hat{u}:= Q_3(u)-Q_3(v)$, one has 
$
\Delta \hat{u} = (6f + P_3(\lambda[D^2u]))^{1/3} - (6f + P_3(\lambda[D^2v]))^{1/3},
$
and $\hat{u}$ vanishes at the boundary. Define $\phi(f) := f^{1/3}$, then $\phi^\prime(f) = \frac{1}{3}f^{-2/3}$.
For each $\bm x\in \overline\Omega$, by the mean value theorem, there exists $\xi(\bm x)$ such that
\begin{equation}\label{eq:laplacew}
\Delta \hat{u}(\bm x) = \phi^\prime(\xi(\bm x))(P_3(\lambda[D^2u])(\bm x)- P_3(\lambda[D^2v])(\bm x))=\phi^\prime(\xi(\bm x))(g[u](\bm x)-g[v](\bm x)).
\end{equation}
Assume  $\tau> \frac{1}{6}K_p$, since $f>\tau$ on $\overline\Omega$, it follows that 
$$
\xi(\bm x) > \inf_{\bm x\in\overline\Omega}6 f(\bm x) - K_p > 6\tau - K_p  >0.
$$
Note that $\phi^{\prime}$ is decreasing for $z>0$, we have
\begin{equation}
\label{phiprime}
|\phi^{\prime}(\xi(\bm x))|< \Gamma_p :=\frac{1}{3(6\tau- K_p)^{2/3}}.
\end{equation}
Thus the $\mathcal{C}^{0,\alpha}$-norm of $\Delta \hat{u}$ is bounded by
$$
\|\Delta \hat{u}\|_{\mathcal{C}^{0,\alpha}(\overline\Omega)} \leq \sup\limits_{\bm x \in \overline\Omega}|\phi^{\prime}(\xi(\bm x))|\|g[u]-g[v]\|_{ \mathcal{C}^{0,\alpha}(\overline\Omega)}\leq \Gamma_p\cdot 69(K(u^*))^2 \|u-v\|_{\mathcal{C}^{2,\alpha}(\overline\Omega)}.
$$
Based on the form of the operator ${Q_3}$ \eqref{eq:q3}, by applying Schauder's estimates \cite[Theorem 6.6]{gilbarg1977elliptic} and the maximum principle \cite[Theorem 3.1]{gilbarg1977elliptic}, when $\tau>\frac{1}{6}((\frac{69C(K(u^*))^2}{3})^{3/2} + K_p)$, we have 
\begin{equation}  
\label{3-fix1}
\|Q_3(u)-Q_3(v)\|_{\mathcal{C}^{2,\alpha}(\overline\Omega)}
< \frac{C\cdot  69(K(u^*))^2}{3((\frac{69C(K(u^*))^2}{3})^{3/2} + K_p-K_p)^{2/3}}\|u-v\|_{\mathcal{C}^{2,\alpha}(\overline\Omega)} = \|u-v\|_{\mathcal{C}^{2,\alpha}(\overline\Omega)},
\end{equation}
where $C$ depends only on $\alpha$ and $\Omega$. 
It shows that $Q_3$ is a contraction on $B(u^*;\varepsilon)$.

\paragraph{Step 2: The operator $Q_3$ is self-mapping on $B(u^*;\varepsilon)$. }~
 
\smallskip
For any given $u\in B(u^*;\varepsilon)$, we want to demonstrate that $\|Q_3(u)-u^*\|_{\mathcal{C}^{2,\alpha}(\overline\Omega)} = \|Q_3(u)-Q_3(u^*)\|_{\mathcal{C}^{2,\alpha}(\overline\Omega)}\leq \varepsilon$. From the above discussion in {\it Step 1}, by replacing $v$ with $u^*$, when $\tau>\frac{1}{6}((\frac{69C(K(u^*))^2}{3})^{3/2} + K_p)$, one has 
\begin{equation}
\label{3-fix2}
\|Q_3(u)-Q_3(u^*)\|_{\mathcal{C}^{2,\alpha}(\overline\Omega)} < 
\|u-u^*\|_{\mathcal{C}^{2,\alpha}(\overline\Omega)} \leq \varepsilon.
\end{equation}
So the operator $Q_3$ is self-mapping on $B(u^*;\varepsilon)$.

\paragraph{ Step 3: The sequence $\{u_{k}\}_{k\ge0}$ converges to the solution of the MA equation \eqref{3.6}.}~
 
\smallskip
According to \eqref{3-fix1} and \eqref{3-fix2}, we can apply the Banach fixed-point theorem  \cite[Theorem 5.7]{brezis2011functional}, which implies that the operator ${Q_3}$ has a unique fixed point $u\in B(u^*;\varepsilon)$. 
Since $u^*$ solves \eqref{3.6}, we have $Q_3(u^*) = u^*$. 
Thus $u^*$ is a fixed point of $Q_3$ in $B(u^*;\varepsilon)$.
Consequently, for any initial value $u_0\in B(u^*;\varepsilon)$, the iterative sequence defined by $u_{k+1} ={Q_3}(u_{k})$ converges to this unique fixed point $u^{*}$ in the $\mathcal{C}^{2,\alpha}$-norm.
This completes the proof. 
\end{proof}

\section{Numerical experiments}
\label{sec:experiments}
In this section, we show numerical experiments to evaluate the performance of our proposed inexact AKI-FP method (Algorithm \ref{alg:fix}) to solve the MAE problem \eqref{eq_MAeig} in 2D and 3D.
All numerical experiments were implemented in MATLAB (R2023b) on a desktop PC with one Intel Core i9-9900KF CPU (8 cores, 3.60 GHz) and 64 GB of memory.

Our method is implemented within a finite element framework. 
The computational domain $\Omega$ is discretized into a mesh $T^h$, which is composed of triangles in 2D and tetrahedra in 3D, with $h$ denoting the characteristic element length.
These meshes are generated with the DistMesh algorithm \cite{persson2004simple}.  
The resulting meshes are then assembled via the IFEM software package \cite{chen2009ifem}, which identifies the boundary and interior nodes required to set up the finite element systems.
Figure \ref{grid2d} and Figure \ref{grid3d} show the meshes for the domains of the tested problems.

\begin{figure}
\centering
\begin{subfigure}{.49\textwidth}
    \centering
    \includegraphics[width=.52\linewidth]{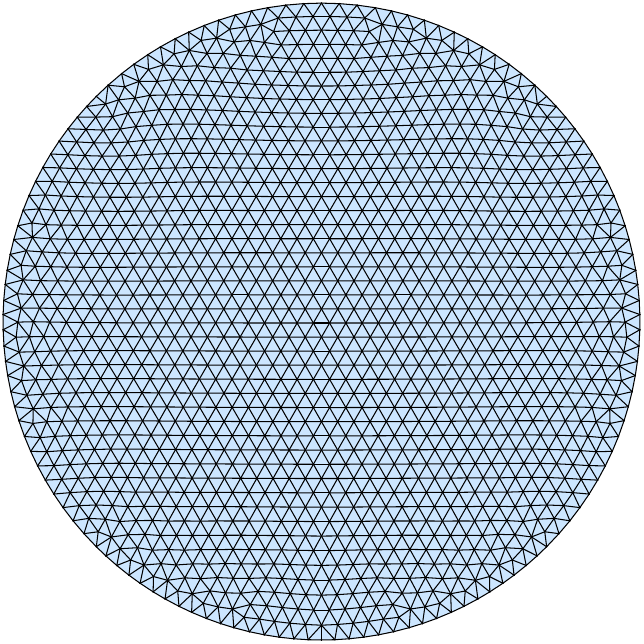}
    \caption{The unit disk domain (\ref{eq.disk})}
     \label{subfig:disk}
\end{subfigure}
\hfill
\begin{subfigure}{.49\textwidth}
    \centering
    \includegraphics[width=.66\linewidth]{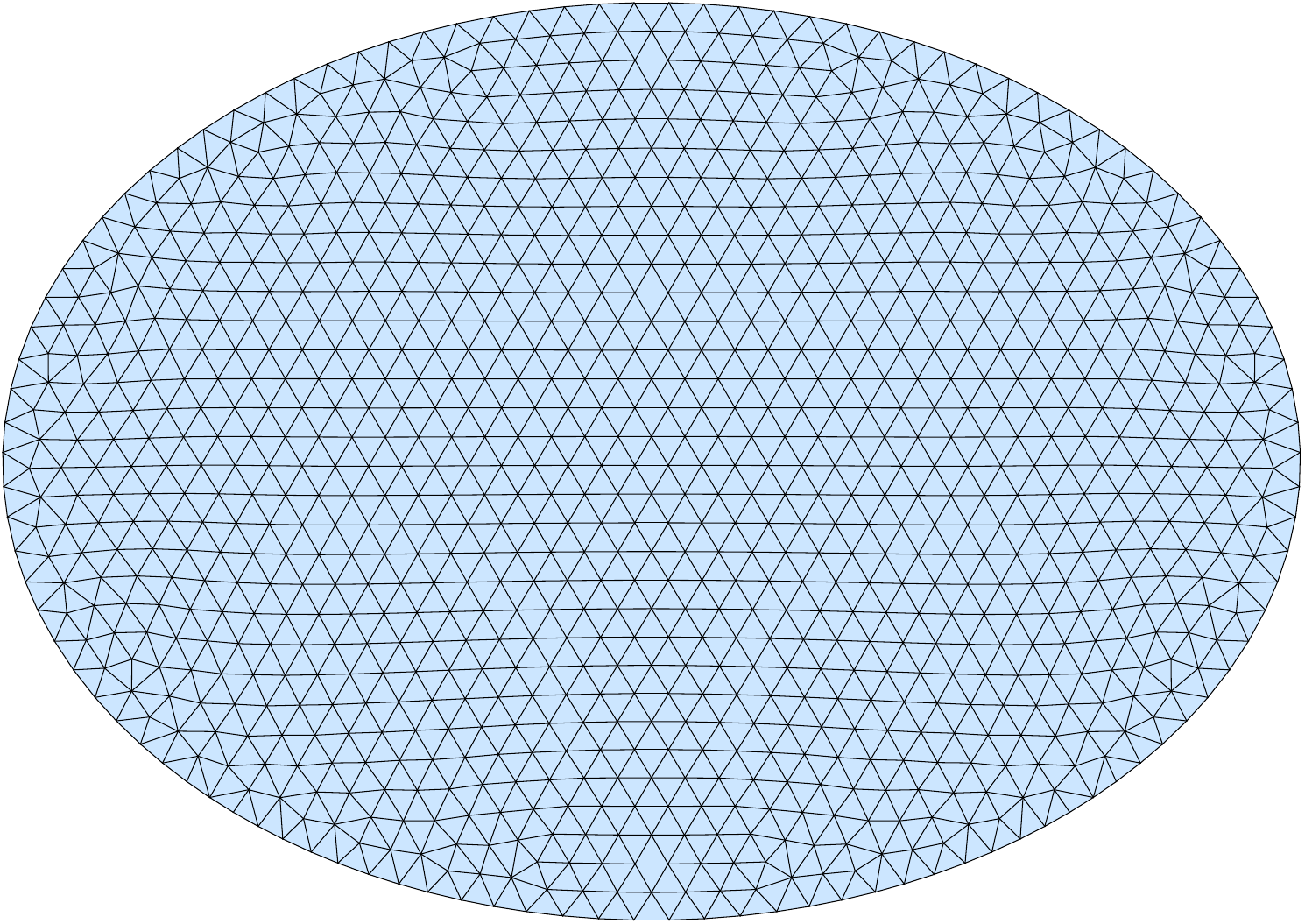}
    \caption{The ellipse domain (\ref{eq.ellipse})}
    \label{subfig:ellipse}
\end{subfigure}

\vspace{2mm}

\begin{subfigure}{.49\textwidth}
    \centering
    \includegraphics[width=.51\linewidth]{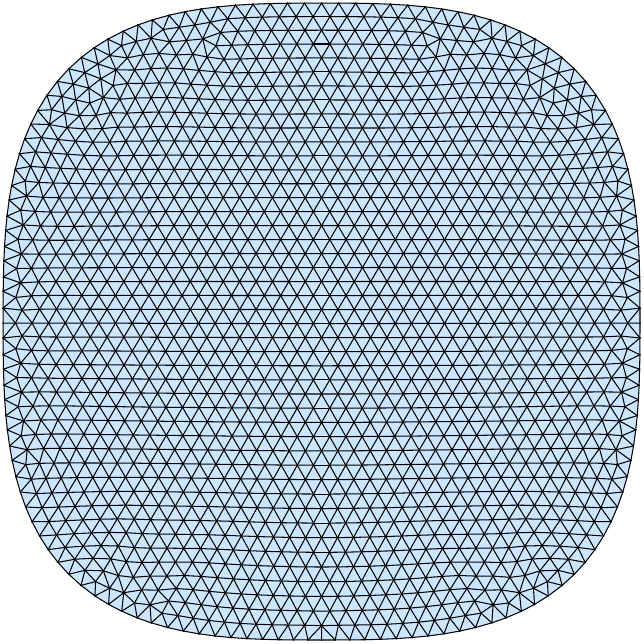}
    \caption{The smoothed square domain (\ref{eq.smoothsq})}
    \label{subfig:smoothsq}
\end{subfigure}
\hfill
\begin{subfigure}{.49\textwidth}
    \centering
    \includegraphics[width=.5\linewidth]{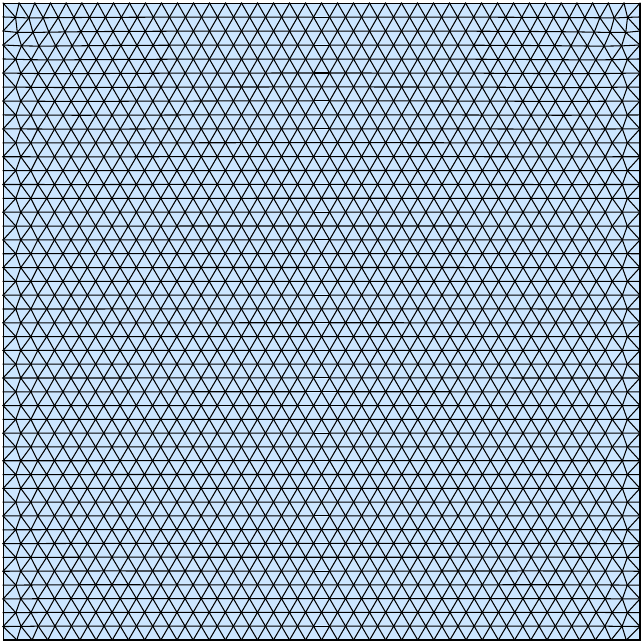}
    \caption{The unit square domain (\ref{eq.sq})}
    \label{subfig:square}
\end{subfigure}

\caption{The domains of the problems and the meshes in $\mathbb{R}^2$ (edge length $h=1/20$)}
\label{grid2d}
\end{figure}

\begin{figure}
\centering
\begin{subfigure}{0.31\textwidth}
    \centering
    \includegraphics[width=.66\linewidth]{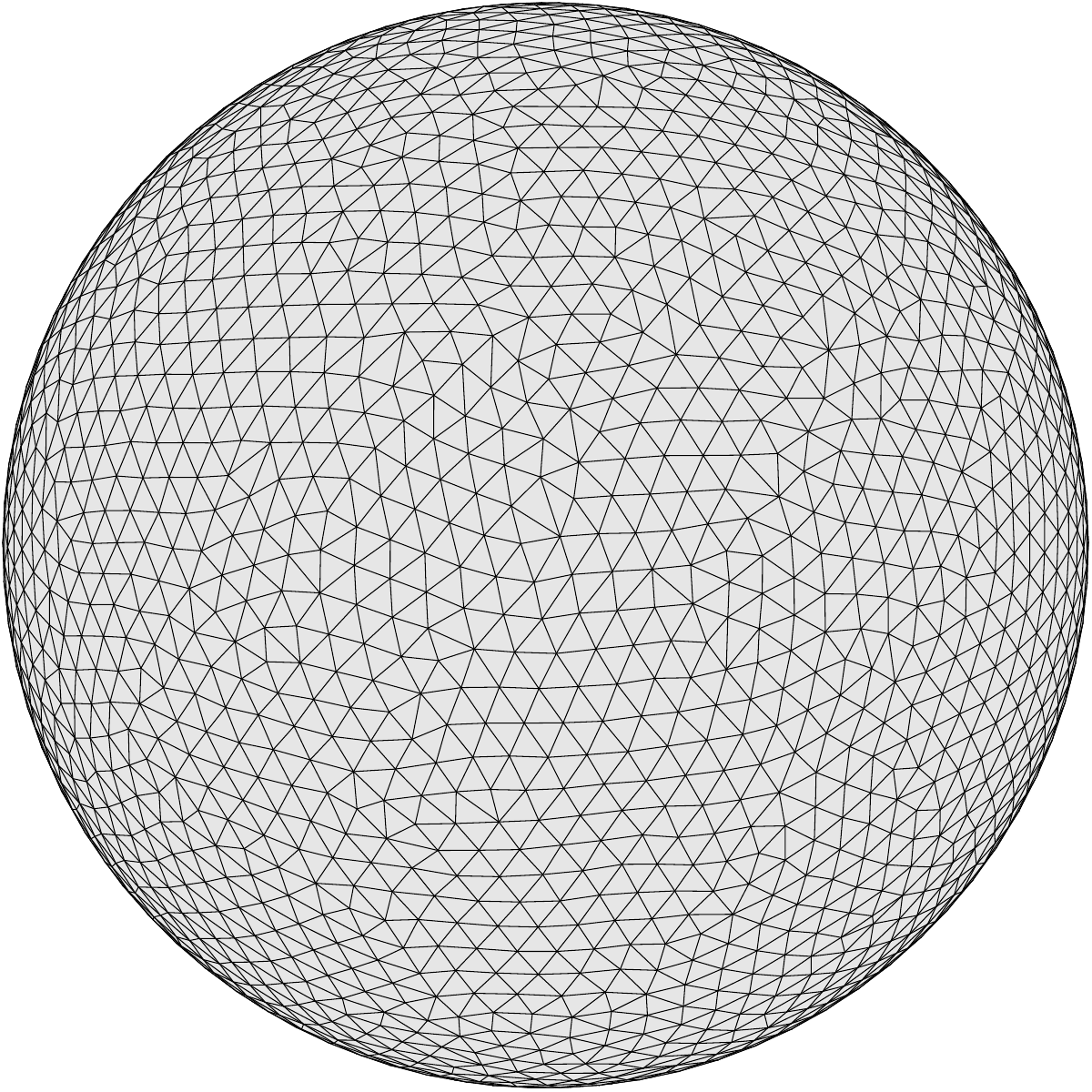}
    \caption{The unit ball domain (\ref{eq.ball})}
    \label{subfig:ball}
\end{subfigure}
\hfill
\begin{subfigure}{0.31\textwidth}
    \centering
    \includegraphics[width=.7\linewidth]{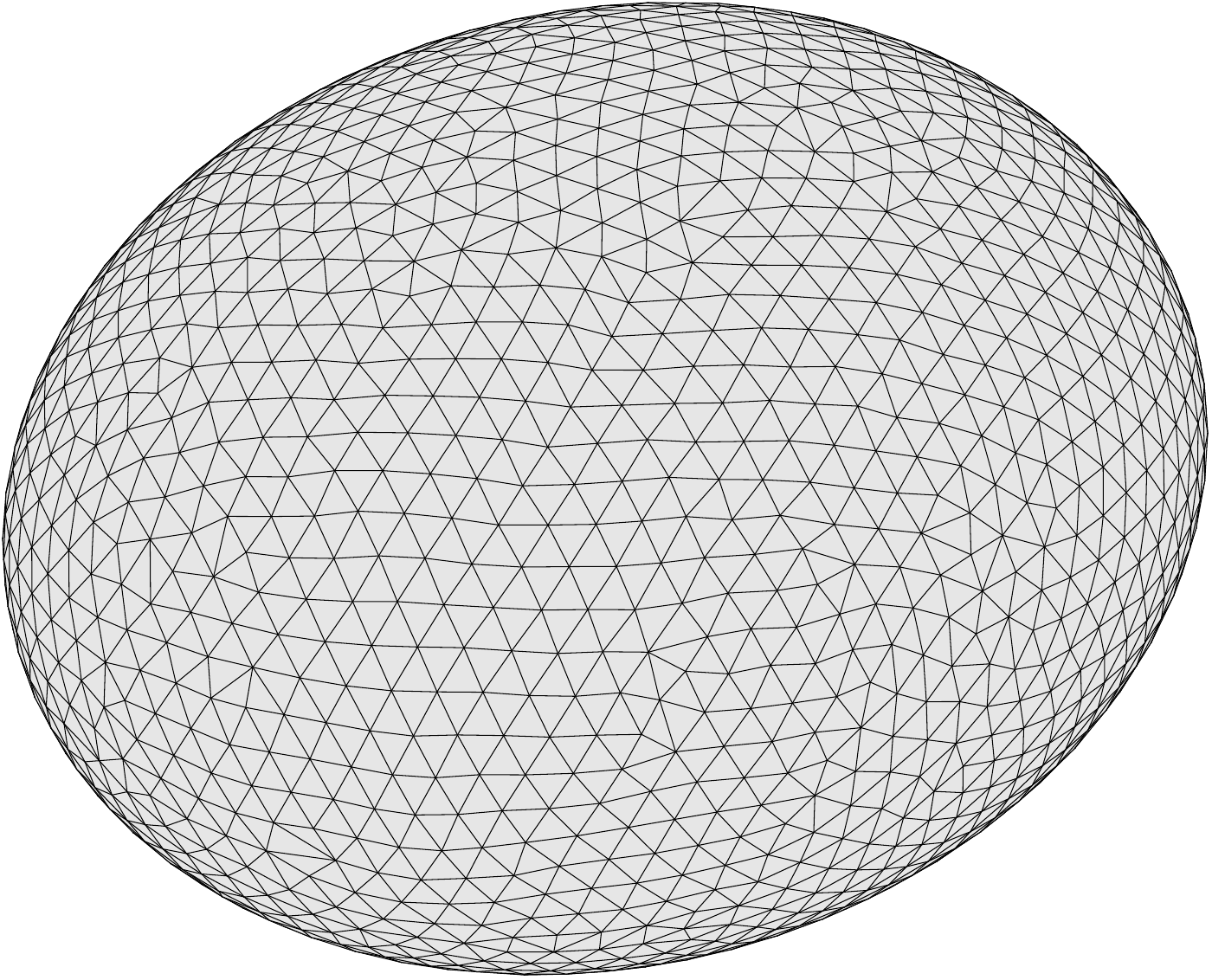}
    \caption{The ellipsoid domain (\ref{eq.ellipsoid})}
    \label{subfig:ellipsoid}
\end{subfigure}
\hfill
\begin{subfigure}{0.32\textwidth}
    \centering
    \includegraphics[width=.6\linewidth]{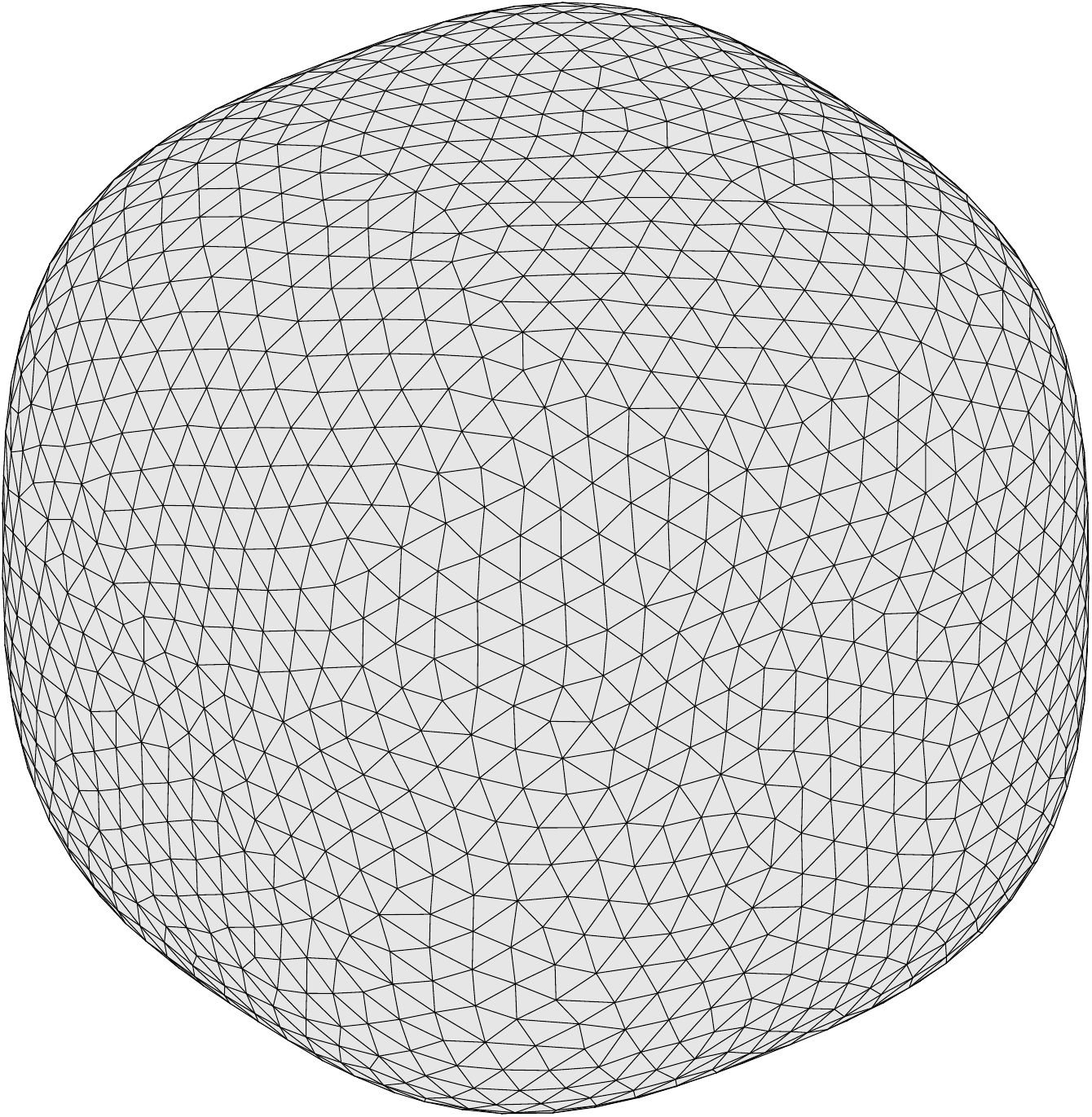}
    \caption{The smoothed cube domain (\ref{eq.smoothedcube})}
    \label{subfig:smoothedcube}
\end{subfigure}

\caption{The domains of the problems and the meshes in $\mathbb{R}^3$ (edge length $h=1/16$)}
\label{grid3d}
\end{figure}

Denote the set of vertices (nodes) of a mesh by 
$V^h=\{v_1,\dots,v_N\}$, where $N$ is the number of vertices. 
A discrete function associated with the mesh is represented by its vector of nodal values $\bm{f}^h\in\mathbb{R}^N$, where the 
component $\bm{f}^h_v$ corresponds to the value at vertex $v \in V^h$.
To measure discrete functions, we define the norm
$$
\|\bm{f}^h\|_h := \Big(\sum_{v\in V^h}|\omega_v|(\bm{f}_v^h)^2\Big)^{{1}/{2}},
$$
where $|\omega_v|$ denotes the volume of the union of elements that share $v$ as a common vertex. 
Similarly, the discrete  $L^2$-norm and $L^\infty$-norm are defined by
$$
\|\bm{f}^h\|_{L^2_h}: = \Big( \frac{1}{d+1}\sum_{v \in V_h} |\omega_v| \, (\bm{f}_v^h)^2 \Big)^{1/2} 
\quad \mbox{and}\quad
\|\bm{f}^h\|_{L^\infty_h}: = \max_{v\in V_h}|f^h_v|,
$$
where $d\in\{2,3\}$ is the spatial dimension. 
Let $\{\bm{u}^h_k\}_{k\ge0}$ be the sequence 
of discrete vectors, where each vector $\bm{u}^h_k \in \mathbb{R}^N$ consists of the nodal values representing the finite element approximation of the function $u_k$ obtained at the $k$-th outer iteration of Algorithm \ref{alg:fix}.
Then we use $D_h^2$ to denote the discrete analog of the Hessian operator $D^2$, which acts on the discrete vector $\bm{u}^h_k$.
Moreover, denote $R^h$ for the discrete analog of the operator $R(\cdot)$ in \eqref{rayli}, and $\{R^h(\bm{u}^h_k)\}_{k\ge0}$ for the corresponding sequence of scalar values generated by  Algorithm \ref{alg:fix}.

\paragraph{Relative errors for termination.}
We define the relative error $\eta_1$ by
$$
\eta_1 := \frac{\| \det D_h^2 \bm{u}^h_k - R^h(\bm{u}^h_k)|\bm{u}^h_k|^d \|_h}{1 + R^h(\bm{u}^h_k)\|(\bm{u}^h_k)^d\|_h},
$$
which measures the accuracy of the numerical solution in dimension $d\in\{2,3\}$.
In addition, to evaluate the sequence 
$\{\bm{u}^h_{k+1}\}_{k\ge0}$ obtained by solving the subproblem \eqref{eq:MAsub} by Cholesky decomposition, 
the following relative error is used:
$$
\eta_2 := \frac{\| \det D_h^2 \bm{u}^h_{k+1} - R^h(\bm{u}^h_k)|\bm{u}^h_k|^d \|_{L^{\infty}_h}}{1 + R^h(\bm{u}^h_k)\|(\bm{u}^h_k)^d\|_{L^{\infty}_h}}.
$$
In our numerical experiments, both the inexact AKI-FP method (Algorithm \ref{alg:fix}) and the original AKI method use the outer loop stopping criterion $\eta_1 < \texttt{tol-outer}$ with $\texttt{tol-outer} = 10^{-6}$.

\paragraph{Initial point and stopping criterion for subproblems.}
A practical approach to obtain the initial function $u_0$ in Algorithm \ref{alg:fix} is first to solve the following Poisson equation:  
\begin{equation}
\label{initial condition}
\begin{cases}
 \Delta u = g &  \mbox{ in } \Omega,\\ 
  u =0 & \mbox{ on } \partial \Omega,
\end{cases}
\end{equation} 
where $g$ is a given constant function.
The initial guess is then refined through several preliminary fixed-point iterations. 
This refinement step aims to drive the initial guess towards a function that numerically exhibits the desired structure in Assumption \ref{ass_blanket1}.

We also employ a warm start strategy to automatically generate an initial point on finer meshes. 
For instance, consider a 2D domain with different element lengths $h\in\{1/20,1/40,1/80,\\1/160\}$. 
On the coarse mesh $h=1/20$, we compute the solution $\bm{u}_*^{{1}/{20}}$ with a given stopping criterion $\eta_1 < 10^{-6}$. 
Then for each finer mesh $h\in\{1/40,1/80,1/160\}$, we first run the algorithm until the outer tolerance reaches $5\times10^{-4}$, and denote the resulting iterate by $\bm{u}_k^h$; we then restart the algorithm with the initial point 
$$
\tilde{\bm{u}}_0^h = \frac{1}{5} \|\bm{u}_*^{{1}/{20}}\|_{{1}/{20}}\cdot \frac{\bm{u}_k^h}{\|\bm{u}_k^h\|_h}.
$$

Algorithm \ref{alg:fix} provides a general framework for choosing the summable tolerance sequence $\{\xi_k\}_{k\geq0}$. 
In our implementation, we adopt the following dynamically adjusted scheme
\begin{equation}
\label{eq:dynamic-adj}
\xi_k :=
\displaystyle  \min\Big( 
\zeta_1 \cdot \xi_{k-1},\  \zeta_2 \cdot \frac{10 \cdot2^{d+l}}{\|(\bm u^{h}_{k-1})^d\|_{L^\infty_h}}\cdot  \eta_{1,k-1} \Big)
\quad 
\mbox{with}\quad 
1>\zeta_1,\zeta_2> 0.
\end{equation}
Here, $\eta_{1,k-1}$ denotes the relative error from the preceding $(k-1)$-th outer step. 
The integer $l\geq1$ denotes the mesh refinement level, corresponding to the sequence of grids in our experiments. 
It ranges from $l=1$ (coarsest) up to $l=4$ (finest) for the 2D cases and up to $l=3$ for the 3D cases.
Since $\zeta_1<1$, one has from \eqref{eq:dynamic-adj} that $0\le \xi_k\le \xi_{k-1}\zeta_1$. 
Therefore, $\{\xi_k\}_{k\geq 0}$ is a summable sequence of nonnegative real numbers. 

The motivation of the dynamic scheme \eqref{eq:dynamic-adj} is to relax the inner tolerance during the early outer iterations, thereby reducing unnecessary inner iterations and improving overall efficiency without compromising accuracy.

\subsection{Numerical experiments in \texorpdfstring{$\mathbb{R}^2$}{num R2}}

In the 2D case, we compare the inexact AKI-FP method (Algorithm~\ref{alg:fix}) with the original AKI method and the operator-splitting approach in \cite{glowinski2020numerical} and \cite{liu2022efficient}. 
For the inexact AKI-FP method, we set $g = 0.5$ in~\eqref{initial condition} and use $(\zeta_1, \zeta_2) = (0.99, 0.9)$ in~\eqref{eq:dynamic-adj} for all meshes.
The original AKI method is tested with two fixed inner tolerances, $\texttt{tol-inner} = 10^{-10}$ or $10^{-6}$, where the inner loop terminates once $\eta_2 \le \texttt{tol-inner}$.

\paragraph{Example 1.} We first consider the MAE problem \eqref{eq_MAeig} on the unit disk domain
\begin{equation}
   	\Omega=\{(x,y)\mid x^2+y^2 < 1\},
	\label{eq.disk}
\end{equation}
which was studied in \cite[Scetion 5.3]{glowinski2020numerical}  and \cite[Example 1]{liu2022efficient}.  
Figure \ref{subfig:disk} illustrates the mesh we used. 
This problem admits a radial solution that can be approximated with high accuracy, making it very suitable for testing the capability of numerical methods for the MAE problem. 
Specifically, let $r:=\sqrt{x^2+y^2}$. 
For a radial function $v(r)$, one has $\det D^2 v=\frac{v'v''}{r}$ to the MAE problem \eqref{eq_MAeig} on the unit disk. 
Substituting this structure into \eqref{eq_MAeig} reduces the MAE problem to the following one-dimensional boundary value problem for $v:=v(r)$:
$$
\begin{cases}
		v'\,v''=\lambda\, r\,v^2\quad \mbox{ in } (0,1),\quad v\leq 0, \ \lambda> 0,\\
		v'(0)=0,\ v(1)=0,\ 
        2\pi \displaystyle\int_0^1 |v|^2\,r\,dr=1.
\end{cases}
$$
By employing the shooting method, a high-precision numerical solution for the above system is obtained as 
$u(0,0)=v(0)\approx-1.0628$ and $\lambda\approx 7.490039$, with the $L^2$-error of $v'v'' - \lambda r v^2$ reaching $ 10^{-6}$.
The maximum number of outer iterations is set to $2000$, as the operator-splitting methods may not always achieve the desired accuracy.
The numerical results for a sequence of refined meshes ($h = 1/20,1/40,1/80,1/160$) are summarized in Figure \ref{fig.disk}, Table \ref{Tab.disk3}, and Table \ref{Tab.disk1}.

\begin{figure}[H]
\centering
\begin{subfigure}{0.49\textwidth}
    \centering
    \includegraphics[width=0.7\linewidth]{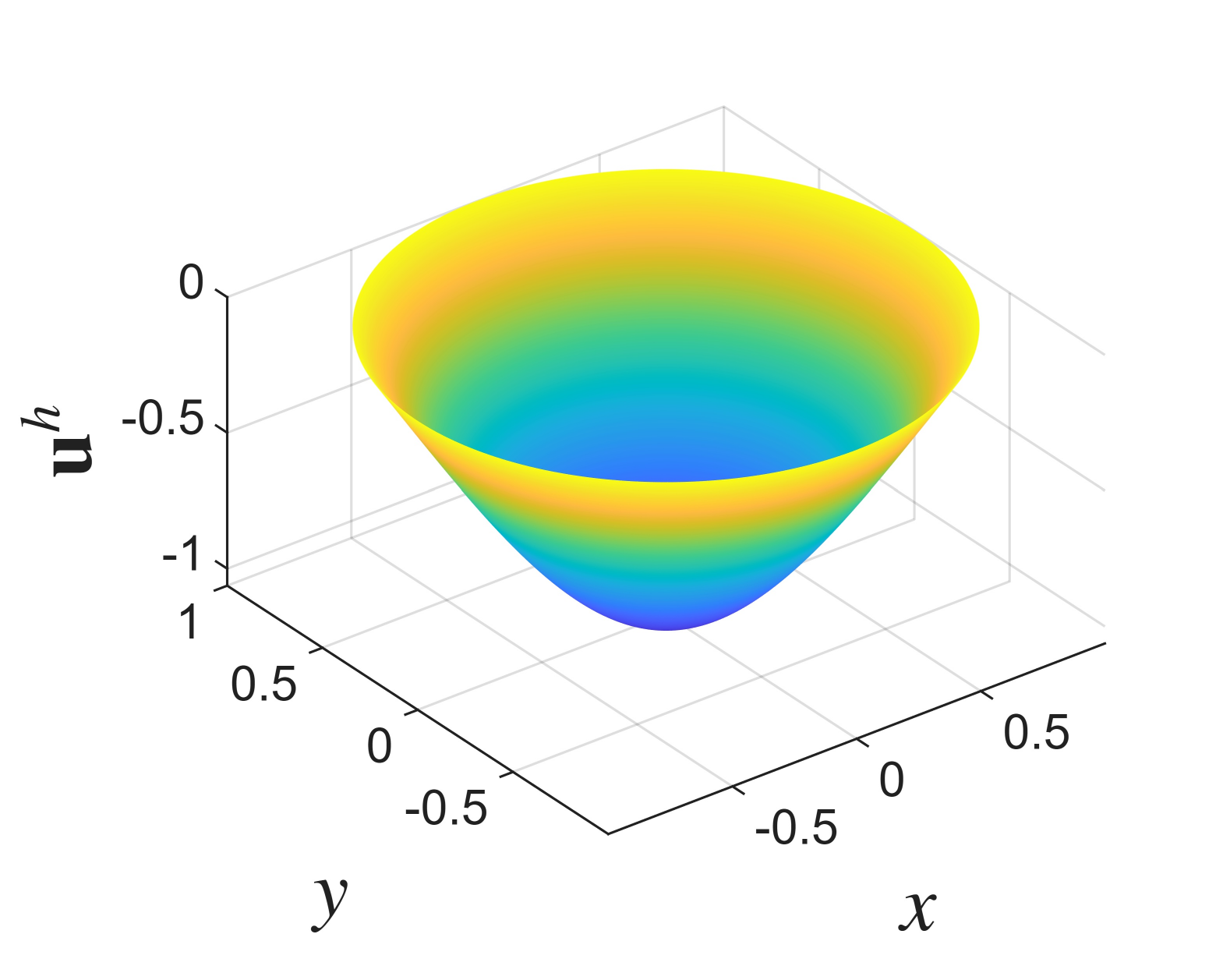}
    \caption{Solution profile with $h=1/80$}
    \label{subfig:disk-solution}
\end{subfigure}
\hfill
\begin{subfigure}{0.49\textwidth}
    \centering
    \includegraphics[width=0.7\linewidth]{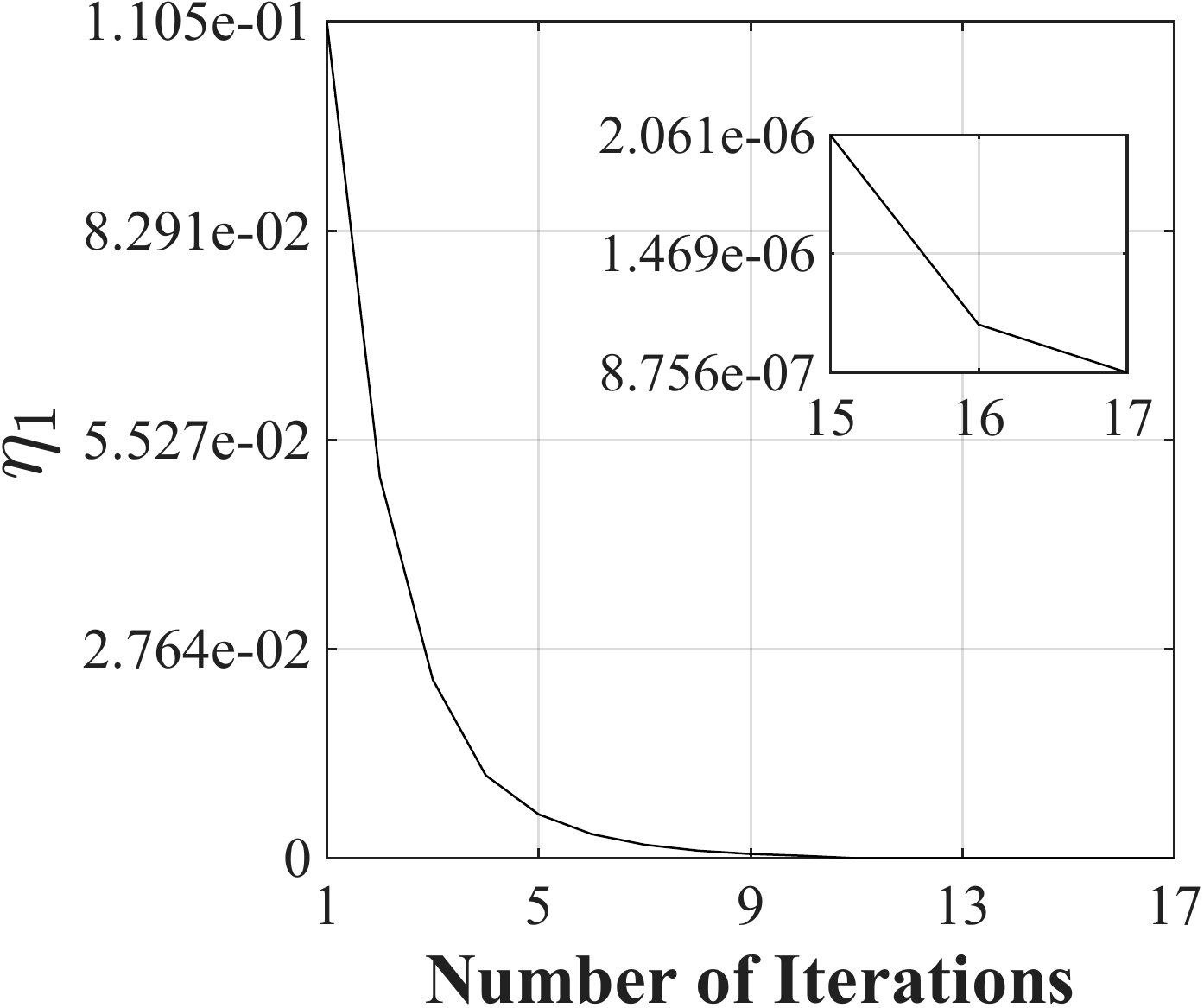}
    \caption{Convergence of $\eta_{1}$ with $h=1/80$}
    \label{subfig:disk-convergence}
\end{subfigure}

\vspace{2mm}

\begin{subfigure}{0.49\textwidth}
    \centering
    \includegraphics[width=0.7\linewidth]{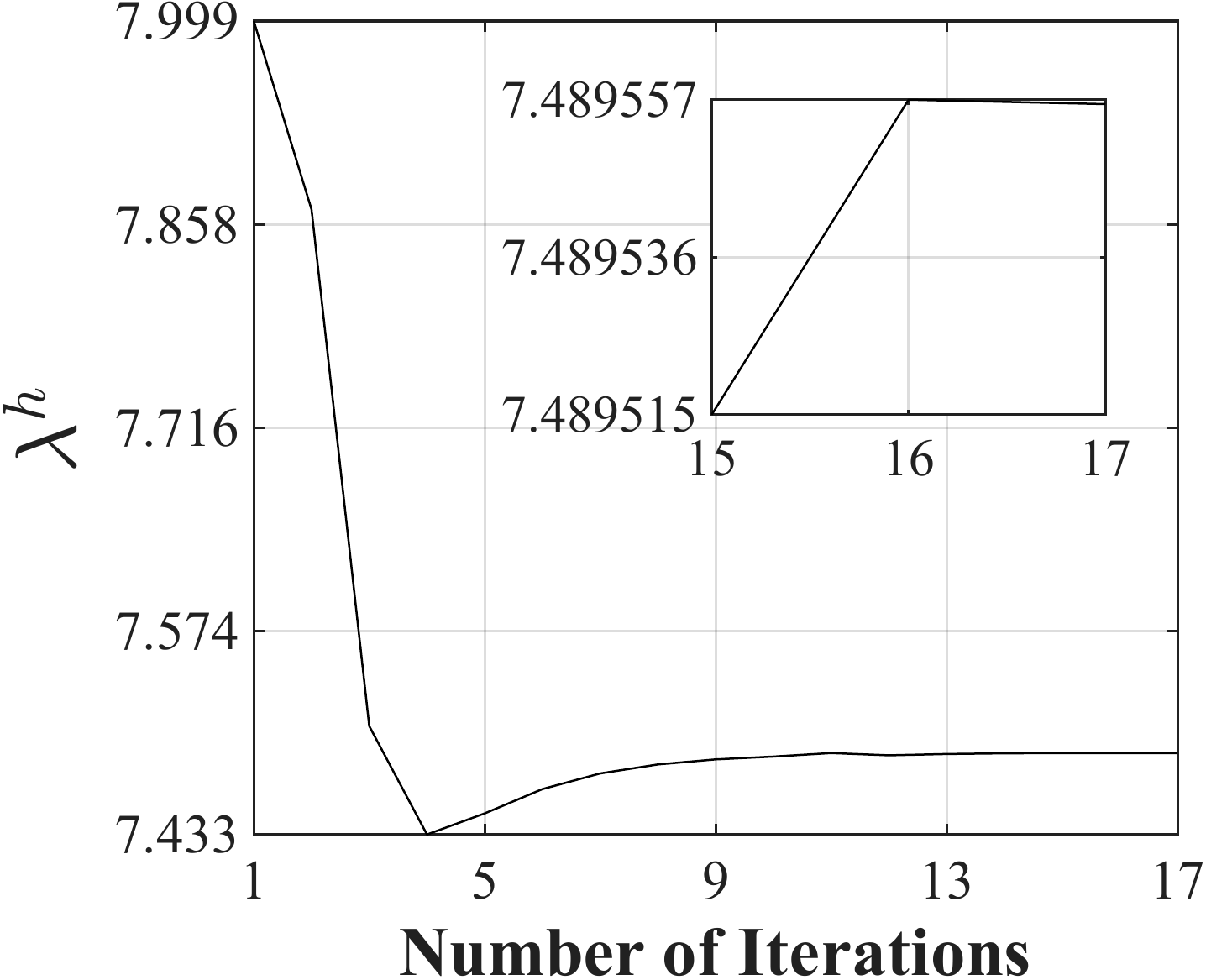}
    \caption{Value of $\lambda^h$ versus iteration $k$ with $h=1/80$}
    \label{subfig:disk-lambda}
\end{subfigure}
\hfill
\begin{subfigure}{0.49\textwidth}
    \centering
    \includegraphics[width=0.7\linewidth]{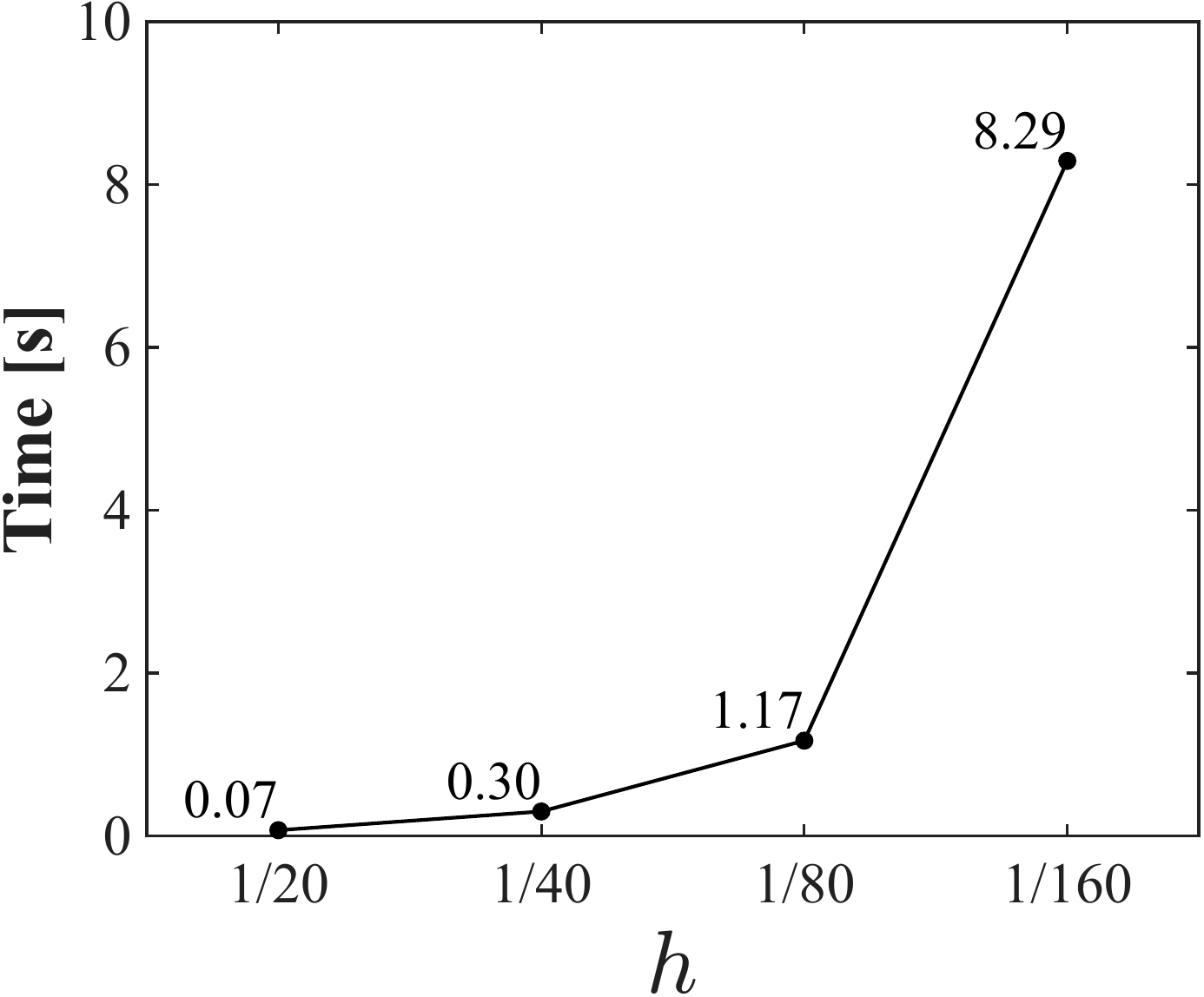}
    \caption{Computational time under $h$}
    \label{subfig:disk-time}
\end{subfigure}

\caption{Performance of the inexact AKI-FP method on the unit disk domain \eqref{eq.disk}}
\label{fig.disk}
\end{figure}

{
Figure \ref{fig.disk} illustrates the detailed results for the inexact AKI-FP method. 
One can observe that the computed solution profile for $h=1/80$, shown in Figure \ref{subfig:disk-solution}, is smooth as expected. 
Figure \ref{subfig:disk-convergence} and Figure \ref{subfig:disk-lambda} show the convergence of the relative error $\eta_1$ and the computed eigenvalue estimate $\lambda^h$ versus the iteration count $k$, respectively, obtained with the mesh parameter $h=1/80$. 
One can see that $\eta_1$ decreases to the tolerance of $10^{-6}$, while the sequence $\lambda^h$ also converges to the corresponding discrete eigenvalue $\lambda\approx 7.490039$. 
Figure \ref{subfig:disk-time} illustrates the relationship between the computation time of the inexact AKI-FP method and the mesh parameter $h$. 
With $h$ decreasing, the observed convergence rate for the $L^2$-error approaches $2$, while the rate for the $H^1$-error approaches $1$ (see Table \ref{Tab.disk3}).

\begin{table}
\centering
\caption{The convergence behavior under different norms of the inexact AKI-FP method on the unit disk domain \eqref{eq.disk}.}
\label{Tab.disk3}
\begin{tabular}{@{}cccccccc@{}}
\toprule
$h$   & \multicolumn{2}{c} {$L^2$-error}& rate &\multicolumn{2}{c}{$H^{1}$-error}&rate \\ 
\midrule
 1/20  & \multicolumn{2}{c}{$2.28\text{E-}{04}$} & -  & \multicolumn{2}{c}{$2.34\text{E-}{03}$}     &  - \\ 
1/40  & \multicolumn{2}{c}{$6.17\text{E-}{05}$} &  1.89 &\multicolumn{2}{c}{$1.16\text{E-}{03}$} &1.01 \\ 
 1/80   & \multicolumn{2}{c}{$1.21\text{E-}{05}$} &  2.35    & \multicolumn{2}{c}{$4.33\text{E-}{04}$} &1.42 \\ 
1/160  &  \multicolumn{2}{c}{$3.08\text{E-}{06}$}& 1.97 &\multicolumn{2}{c}{$1.80\text{E-}{04}$}      &1.27\\ 
\bottomrule
\end{tabular}
\end{table}

\begin{table}[ht]
\centering
\caption{Comparison on the numerical performance of the inexact AKI-FP method (``iAKI-FP''), the original AKI method (``AKI''), and the operator-splitting methods (``Split'') for solving the MAE problem \eqref{eq_MAeig} on the unit disk domain \eqref{eq.disk}. 
In the table, ``Iter'' denotes the number of outer iterations; 
``sub-Iter'' denotes the total number of subproblems solved during the algorithm's execution; 
``$\eta_1$'' denotes the outer-loop relative residual; 
``$\lambda^h$'' denotes the computed discrete eigenvalue; 
and
``$\min(\bm u^h)$'' denotes the minimum nodal value of the normalized computed solution with $\|\bm u^h\|_{L^2_h} = 1$.}
\label{Tab.disk1}
\begin{tabular}{@{}ccccccccc@{}}
\toprule
Algorithm & $h$ & Iter & sub-Iter & \multicolumn{2}{c}{$\eta_1$} & $\lambda^h$ & $\min(\bm{u}^h)$ & Time [s]\\
\midrule
iAKI-FP & 1/20 & 45 & 45 & \multicolumn{2}{c}{$9.85\text{E-}{07}$} & 7.481792881 & -1.061907028 & $\bm{0.07}$ \\
AKI-$10^{-6}$ & 1/20 & 9 & 92 & \multicolumn{2}{c}{$6.92\text{E-}{07}$} & 7.481792883 & -1.061906215 & 0.08 \\
AKI-$10^{-10}$ & 1/20 & 9 &417 & \multicolumn{2}{c}{$1.52\text{E-}{07}$} & 7.481792887 & -1.061906844 & 0.23 \\
Split & 1/20 & 2000 & 2000 & \multicolumn{2}{c}{$4.04\text{E-}{01}$} & 7.425159943 & -1.029582001 &73.50 \\
Split in \cite{liu2022efficient}  & 1/20 & 13 & 13 & \multicolumn{2}{c}{-} & 5.9716 & -1.0189 & - \\
Split in \cite{glowinski2020numerical}  & 1/20 & 312 & - & \multicolumn{2}{c}{-} & 6.13 & - & - \\
\midrule
iAKI-FP & 1/40 & 28 & 27 & \multicolumn{2}{c}{$9.49\text{E-}{07}$} & 7.488102312 & -1.062780396 & $\bm{0.30}$ \\
AKI-$10^{-6}$ & 1/40 & 9 & 117 & \multicolumn{2}{c}{$9.73\text{E-}{07}$} & 7.488103591 & -1.062755623 & 0.55 \\
AKI-$10^{-10}$ & 1/40 & 9 & 462 & \multicolumn{2}{c}{$6.53\text{E-}{07}$} & 7.488103584 & -1.062763767 & 1.40 \\
Split & 1/40 & 2000 & 2000 & \multicolumn{2}{c}{$6.50\text{E-}{01}$} & 7.603716691 & -1.042761461 & 315.30 \\
Split in \cite{liu2022efficient}  & 1/40 & 13 & 13 & \multicolumn{2}{c}{-} & 6.6656 & -1.0362 & - \\
Split in \cite{glowinski2020numerical} & 1/40 & 976 & - & \multicolumn{2}{c}{-} & 6.81 & - & - \\
\midrule
iAKI-FP & 1/80 & 17 & 31 & \multicolumn{2}{c}{$8.76\text{E-}{07}$} & 7.489556059  &  -1.062756964 & $\bm{1.17}$ \\
AKI-$10^{-6}$ & 1/80 & 9 & 168 & \multicolumn{2}{c}{$9.71\text{E-}{07}$} & 7.489560784 & -1.062735028 & 3.28 \\
AKI-$10^{-10}$ & 1/80 & 9 & 732 & \multicolumn{2}{c}{$6.52\text{E-}{07}$} & 7.489560774 &  -1.062743148 & 12.86 \\
Split & 1/80 & 2000 & 2000 & \multicolumn{2}{c}{$9.68\text{E-}{01}$} & 7.588529323 & -1.052027837 & 1094.55\\
Split in \cite{liu2022efficient}  & 1/80 & 13 & 13 & \multicolumn{2}{c}{-} & 7.0655 & -1.0484 & - \\
Split in \cite{glowinski2020numerical}  & 1/80 & 1017 & - & \multicolumn{2}{c}{-} & 7.12 & - & - \\
\midrule
iAKI-FP & 1/160 & 15 & 55 & \multicolumn{2}{c}{$3.82\text{E-}{07}$} & 7.489922294 & -1.062756210  & $\bm{8.29}$ \\
AKI-$10^{-6}$ & 1/160 & 9 & 260 &  \multicolumn{2}{c}{$9.71\text{E-}{07}$} & 7.489922511 & -1.062737279 &  29.30 \\
AKI-$10^{-10}$ & 1/160 & 9 & 1308 & \multicolumn{2}{c}{$6.51\text{E-}{07}$} & 7.489922500 & -1.062745394  & 138.82 \\
Split & 1/160 & 2000 & 2000 & \multicolumn{2}{c}{$1.35\text{E-}{00}$} & 7.546402832 & -1.057404781 & 4456.13 \\
Split in \cite{liu2022efficient}  & 1/160 & 13 & 13 & \multicolumn{2}{c}{-} & 7.2816 & -1.0556 & - \\
Split in \cite{glowinski2020numerical}  & 1/160 & 3315 & - & \multicolumn{2}{c}{-} & 7.32 & - & - \\
\bottomrule
\end{tabular}
\end{table}

Table \ref{Tab.disk1} compares the inexact AKI-FP method with the original AKI method and the operator-splitting methods. 
As highlighted in \cite{liu2022efficient}, its approach to implementing the operator-splitting method achieves faster convergence than the splitting method in \cite{glowinski2020numerical}, while providing solutions with a comparable accuracy.
Therefore, we only implement the operator-splitting method from \cite{liu2022efficient} for comparison.
In our implementation, we adopt the parameter settings from \cite{liu2022efficient} with one exception: we set $\tau = h^2$ rather than $h/2$.
This change is motivated by our observation that a large value of $\tau$ can cause the algorithm to terminate quickly under the stopping criterion used in \cite{liu2022efficient} (the difference between successive iterates $\|\bm{u}^h_{k+1}-\bm{u}^h_{k}\|_{L^2_h}$), leading to a less accurate eigenvalue.
To properly stop the operator-splitting methods, we set the maximum number of outer iterations as $2000$ for all the algorithms.
The ``Split'' row reports the numerical results of the operator-splitting method we implemented. 
The row labeled ``Split in \cite{liu2022efficient}'' and ``Split in \cite{glowinski2020numerical}'' comes directly from the published results in \cite[Example~1]{liu2022efficient} and \cite[Section 5.3]{glowinski2020numerical} for a comparison of accuracy.  

According to Table \ref{Tab.disk1}, the inexact AKI-FP method and the original AKI method produce nearly identical eigenvalue approximations ($\lambda^h$) and minimum values ($\min(\bm{u}^h)$) across all $h$. 
The computed eigenvalues are more accurate than those from the implemented operator-splitting method, and significantly better than the ones reported in ``Split in \cite{liu2022efficient}'' and ``Split in \cite{glowinski2020numerical}''. 
Notably, the inexact AKI-FP method achieves the given tolerance with significantly less computational cost, requiring approximately $1/17$ of the time needed by the original AKI method ($\texttt{tol-inner}\equiv10^{-10}$) on the finest mesh $h=1/160$. 
Moreover, the inexact AKI-FP method with the dynamic adjustment \eqref{eq:dynamic-adj} demonstrates superior numerical performance over that with a fixed tolerance of $10^{-6}$.
As shown in the ``sub-Iter'' column of Table \ref{Tab.disk1}, the improvement in computational efficiency of the inexact AKI  method is primarily attributed to the significantly smaller number of Poisson equations that need to be solved, compared to the original AKI method. 
Given the observed performance of the operator-splitting methods, we decided to compare only with the original AKI method for the subsequent problems.
}

\paragraph{Example 2.} Consider the MAE problem \eqref{eq_MAeig} on the ellipse domain
\begin{equation}
   		\Omega=\left\{(x,y)\mid x^2+2y^2<1\right\}.
	\label{eq.ellipse}
\end{equation}

Figure \ref{subfig:ellipse} illustrates the mesh we used.

Similar to Example 1, Figure \ref{fig.ell} presents the detailed results for the inexact AKI-FP method, with the solution displayed in Figure \ref{subfig:solution-profile}, where it is evident that the solution is smooth. 
For $h=1/80$, the convergence of the relative error $\eta_1$ and the computed eigenvalues $\lambda^h$ are shown in Figures \ref{subfig:convergence-eta} and \ref{subfig:lambda-iteration}, respectively. 
It is clear that the relative error $\eta_1$ decreases rapidly to below $10^{-6}$, and the eigenvalues $\lambda^h$ converge quickly. Moreover, Figure \ref{subfig:computation-time} illustrates the relationship between the computation time of the inexact AKI-FP method and the mesh parameter $h$.

Table \ref{Tab.ell} shows the computational results of the inexact AKI-FP method with a dynamically adjusted tolerance \eqref{eq:dynamic-adj} and the original AKI method on the ellipse domain \eqref{eq.ellipse}.  From Table \ref{Tab.ell}, it can be observed that for all tested mesh sizes $h$, both methods produce nearly identical converged eigenvalues $\lambda^h$ and a minimum value $\min(\bm{u}^h)$. 
It can be seen that the computational speed of the inexact AKI-FP method is more than $28$ times faster than the AKI method ($\texttt{tol-inner}\equiv10^{-10}$) when $h=1/160$.  As shown in the ``sub-Iter'' column of Table \ref{Tab.ell}, the number of Poisson equations solved by both methods reveals why the inexact AKI-FP method is faster.

\paragraph{Example 3. } Consider the MAE problem \eqref{eq_MAeig} on the smoothed square domain
\begin{equation}
   		\Omega=\left\{(x,y)\mid |x|^{3}+|y|^{3}< 1\right\}.
	\label{eq.smoothsq}
\end{equation}
Figure \ref{subfig:smoothsq} illustrates the mesh we used. 
The numerical results shown in Figure \ref{fig.smoothsq} are similar to those of the previous two examples. 
Specifically, Figure \ref{subfig:smoothsq-solution} represents the profile of the numerical solution.  
Figure \ref{subfig:smoothsq-convergence} illustrates the downward trajectory of the relative error $\eta_1$, while Figure \ref{subfig:smoothsq-lambda} presents the converging path of the computed eigenvalues $\lambda^h$. Figure \ref{subfig:smoothsq-time} shows how the computation time of the inexact AKI-FP method changes as the mesh parameter $h$ varies.

\begin{figure}
\centering
\begin{subfigure}{0.49\textwidth}
    \centering
    \includegraphics[width=.7\linewidth]{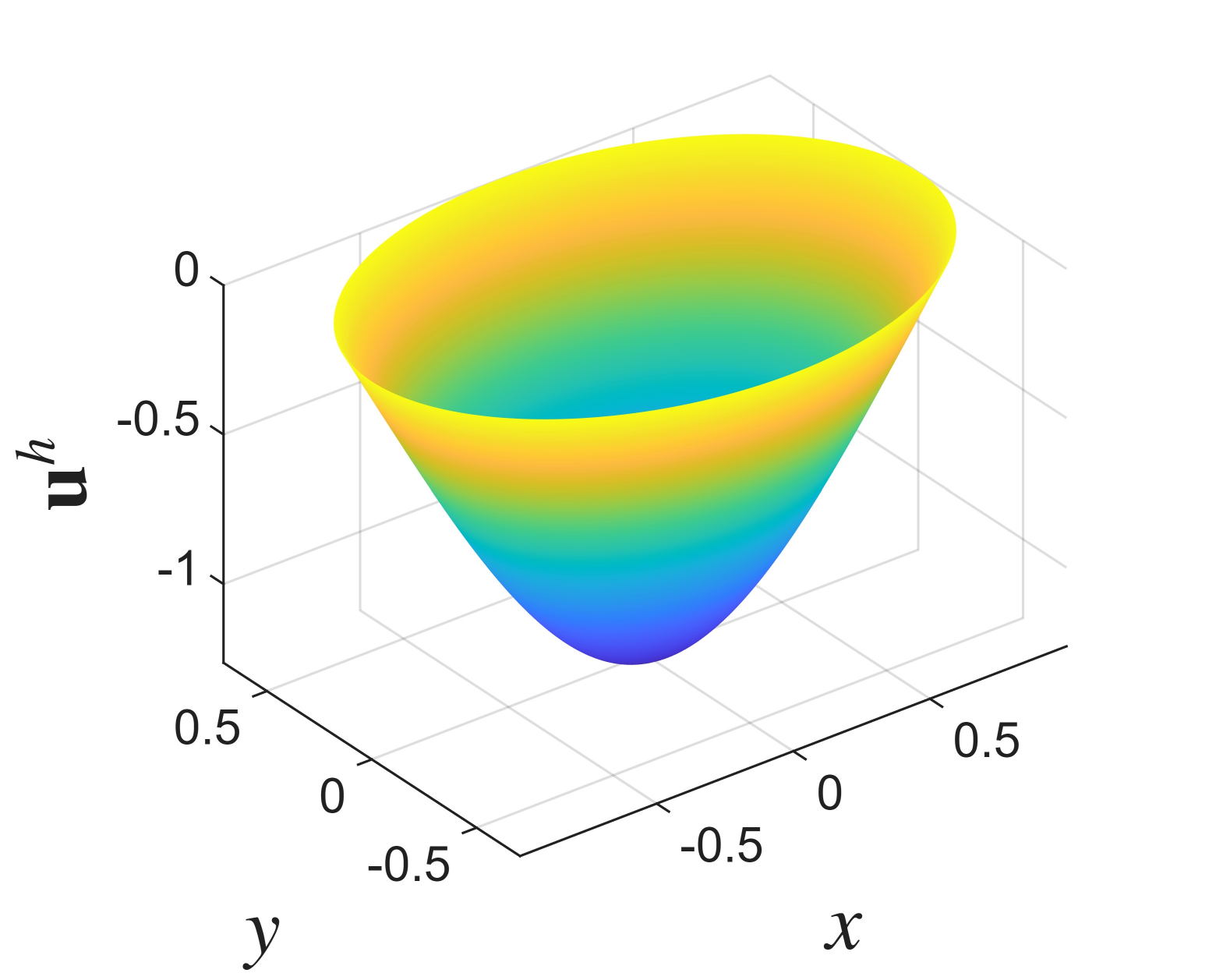}
    \caption{Solution profile with $h=1/80$}
    \label{subfig:solution-profile}
\end{subfigure}
\hfill
\begin{subfigure}{0.49\textwidth}
    \centering
    \includegraphics[width=.7\linewidth]{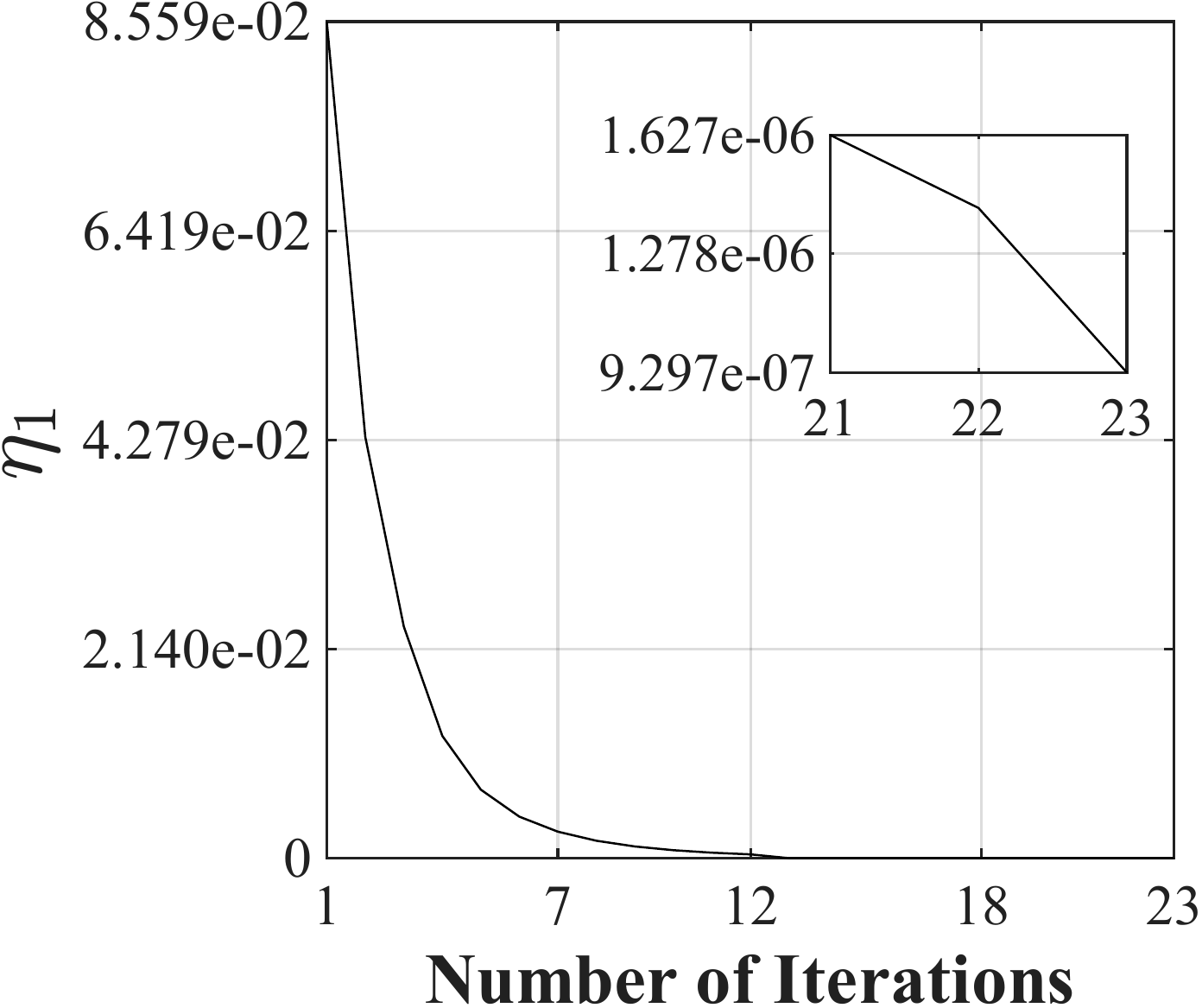}
    \caption{Convergence of $\eta_{1}$ with $h=1/80$}
    \label{subfig:convergence-eta}
\end{subfigure}

\vspace{2mm}

\begin{subfigure}{0.49\textwidth}
    \centering
    \includegraphics[width=.7\linewidth]{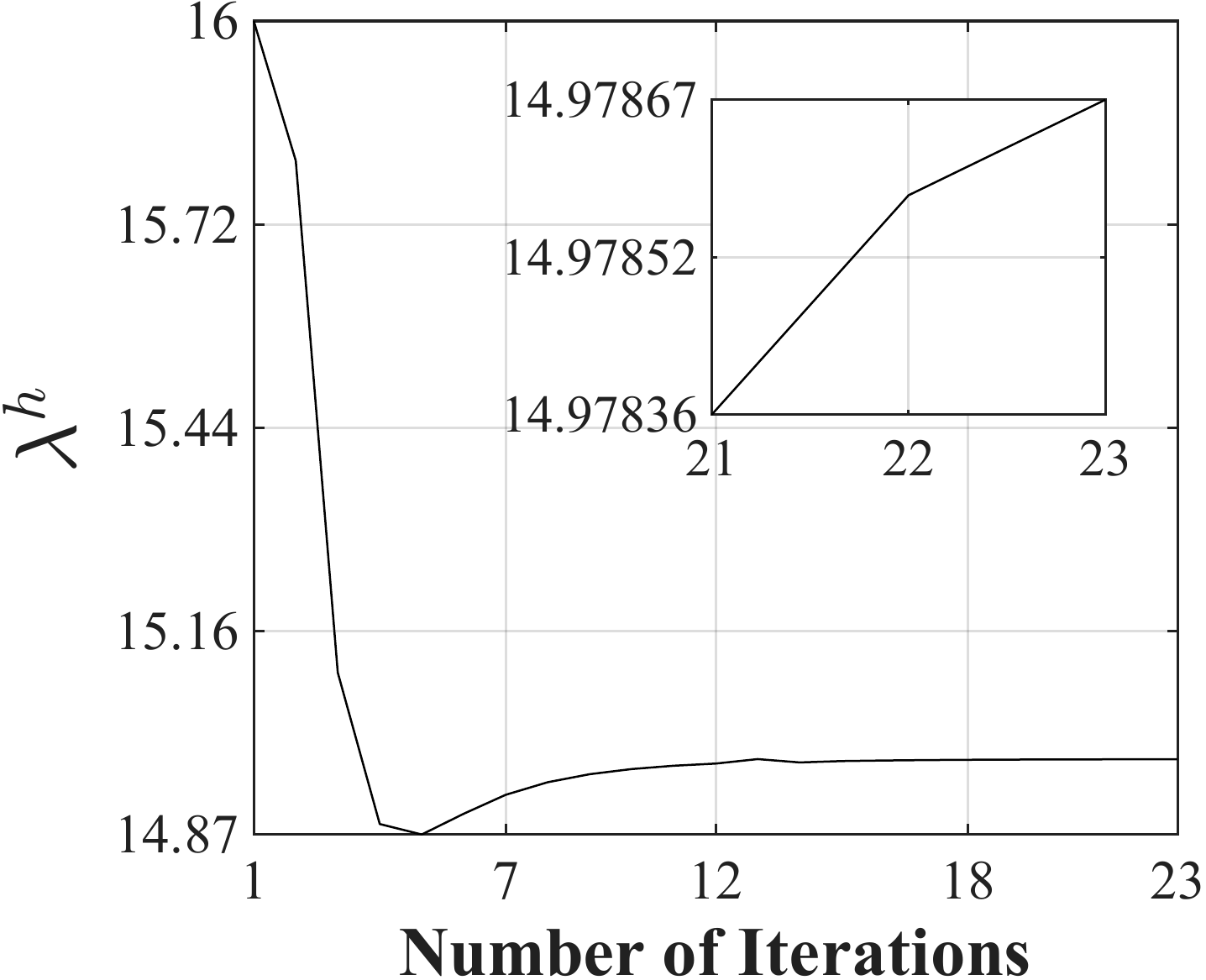}
    \caption{The value of $\lambda^h$ versus iteration $k$ with $h=1/80$}
    \label{subfig:lambda-iteration}
\end{subfigure}
\hfill
\begin{subfigure}{0.49\textwidth}
    \centering
    \includegraphics[width=.7\linewidth]{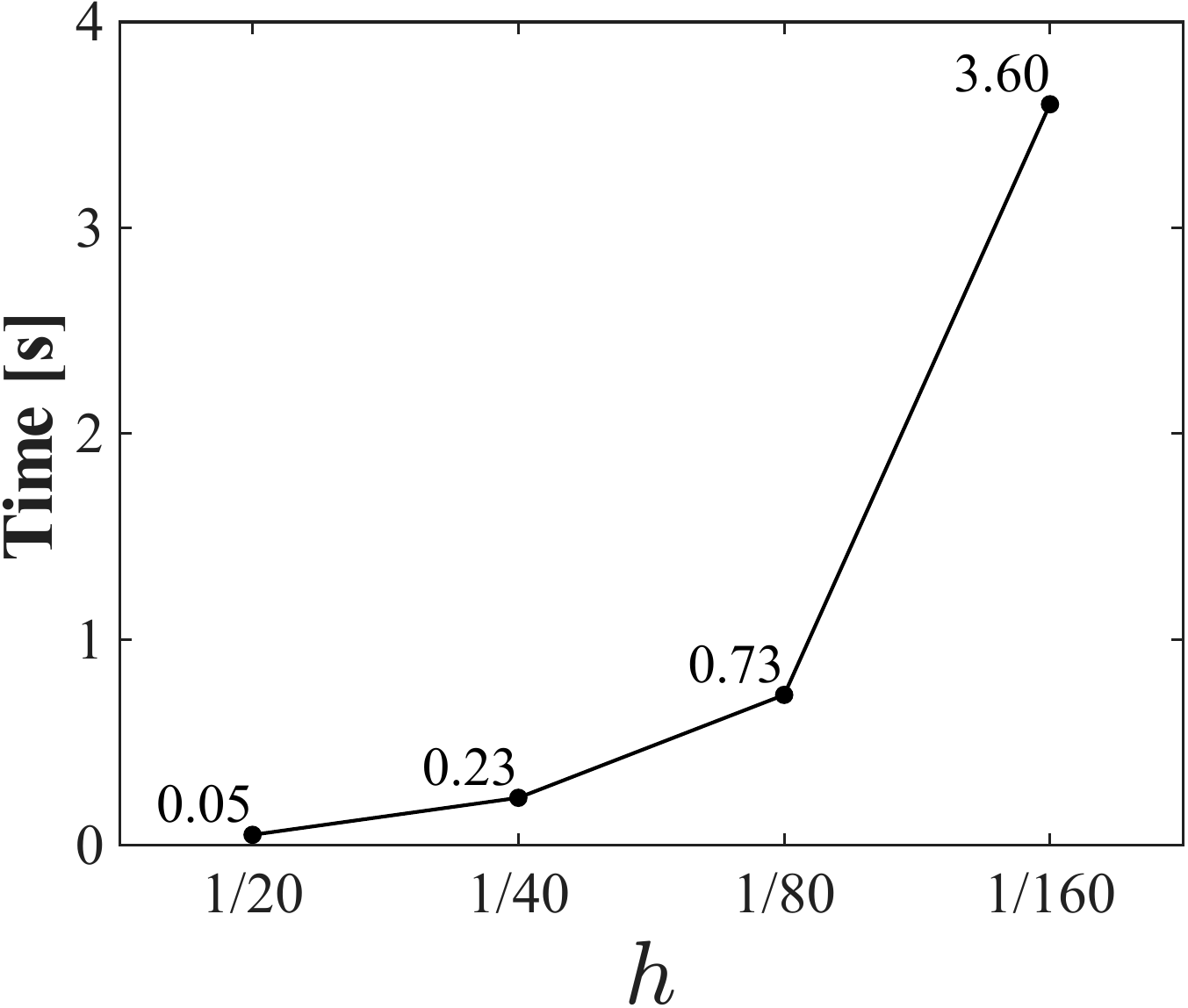}
    \caption{Computational time for different values of $h$}
    \label{subfig:computation-time}
\end{subfigure}

\caption{Performance of the inexact AKI-FP method on the ellipse domain \eqref{eq.ellipse}}
\label{fig.ell}
\end{figure}

\begin{table}
\centering
\caption{Numerical performances of the inexact AKI-FP method and the original AKI method for solving the MAE problem  \eqref{eq_MAeig} on the ellipse domain  \eqref{eq.ellipse}.  The notation is identical to Table~\ref{Tab.disk1}.}
\label{Tab.ell}
\begin{tabular}{@{}ccccccccc@{}}
\toprule
Algorithm & $h$ & Iter & sub-Iter & \multicolumn{2}{c}{$\eta_1$} & $\lambda^h$ & $\min(\bm{u}^h)$ & Time [s]\\
\midrule
iAKI-FP & 1/20 & 49 & 49 & \multicolumn{2}{c}{$9.17\text{E-}{07}$} & 14.956160794 & -1.262390481 & $\bm{0.05}$ \\
AKI-$10^{-6}$ & 1/20 & 9 & 119 & \multicolumn{2}{c}{$8.04\text{E-}{07}$} & 14.956160700 & -1.262389676 & 0.06 \\
AKI-$10^{-10}$ & 1/20 & 9 & 507 &  \multicolumn{2}{c}{$2.85\text{E-}{07}$} & 14.956160738 & -1.262390220 &  0.19 \\
\midrule
iAKI-FP & 1/40 & 27 & 26 &  \multicolumn{2}{c}{$9.41\text{E-}{07}$}& 14.974024191 & -1.263622632 & $\bm{0.23}$ \\
AKI-$10^{-6}$ & 1/40 & 10 & 158 & \multicolumn{2}{c}{$7.02\text{E-}{07}$} & 14.974065618 & -1.263608170 & 0.41 \\
AKI-$10^{-10}$ & 1/40 & 10 & 647 & \multicolumn{2}{c}{$1.88\text{E-}{07}$} & 14.974065735 & -1.263621594 & 1.19 \\
\midrule
iAKI-FP & 1/80 & 23 & 31 & \multicolumn{2}{c}{$9.30\text{E-}{07}$} & 14.978673288 & -1.263867192 & $\bm{0.73}$ \\
AKI-$10^{-6}$ & 1/80 & 10 & 230 & \multicolumn{2}{c}{$7.00\text{E-}{07}$} & 14.978703084 & -1.263851579 & 2.49 \\
AKI-$10^{-10}$ & 1/80 & 10 & 1013 & \multicolumn{2}{c}{$1.87\text{E-}{07}$} &  14.978703070 & -1.263864930 & 9.69 \\
\midrule
iAKI-FP & 1/160 & 18& 33 &  \multicolumn{2}{c}{$7.95\text{E-}{07}$} &  14.979722702 & -1.263848412 & $\bm{3.60}$ \\
AKI-$10^{-6}$ & 1/160 & 10 & 368 & \multicolumn{2}{c}{$7.00\text{E-}{07}$} & 14.979734343 & -1.263842836 & 22.06 \\
AKI-$10^{-10}$ & 1/160 & 10 & 1767 &  \multicolumn{2}{c}{$1.86\text{E-}{07}$} & 14.979734304 & -1.263856170 & 100.38 \\
\bottomrule
\end{tabular}
\end{table}

\begin{figure}
\centering
\begin{subfigure}{0.49\textwidth}
    \centering
    \includegraphics[width=0.7\linewidth]{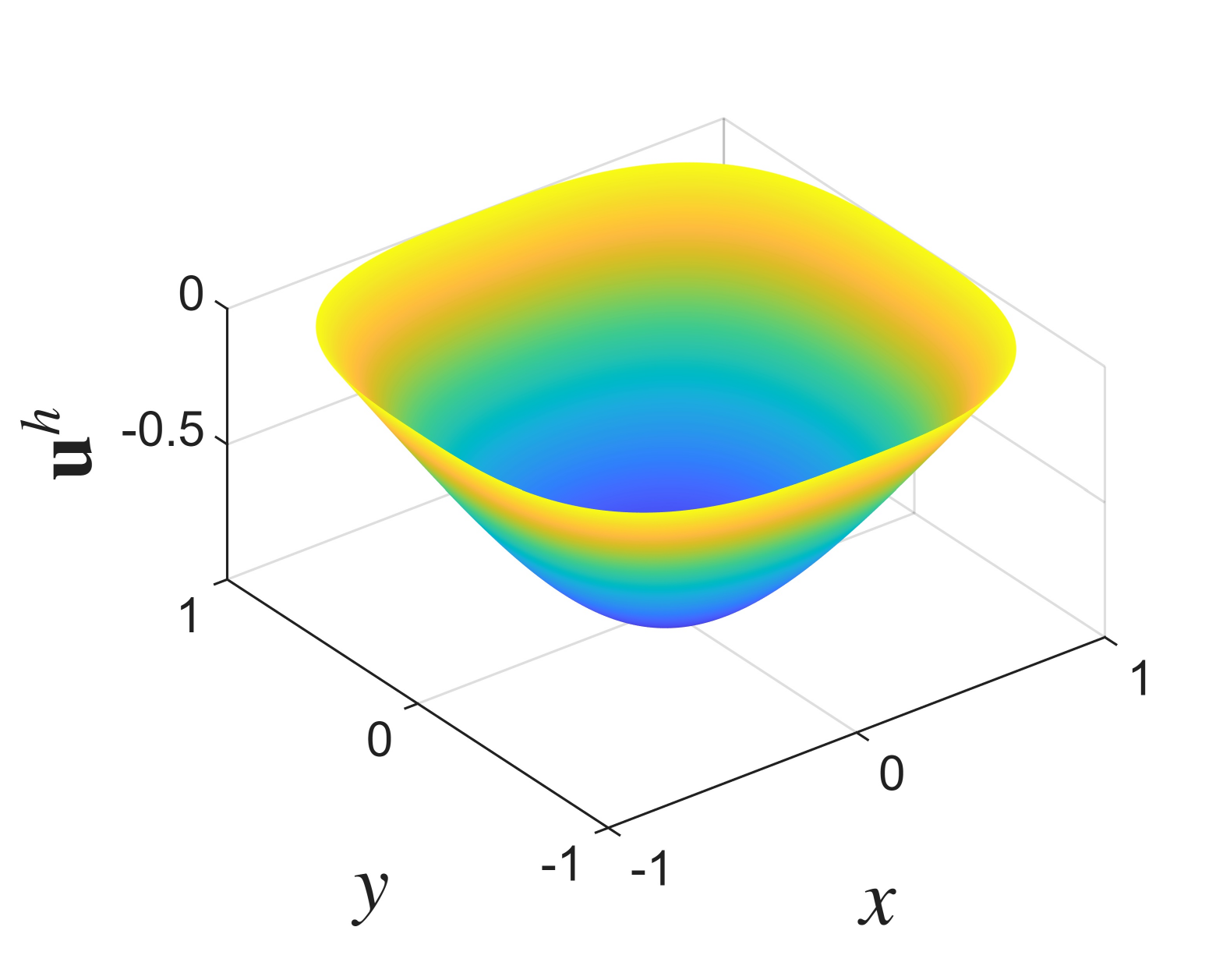}
    \caption{Solution profile with $h=1/80$}
    \label{subfig:smoothsq-solution}
\end{subfigure}
\hfill
\begin{subfigure}{0.49\textwidth}
    \centering
    \includegraphics[width=0.7\linewidth]{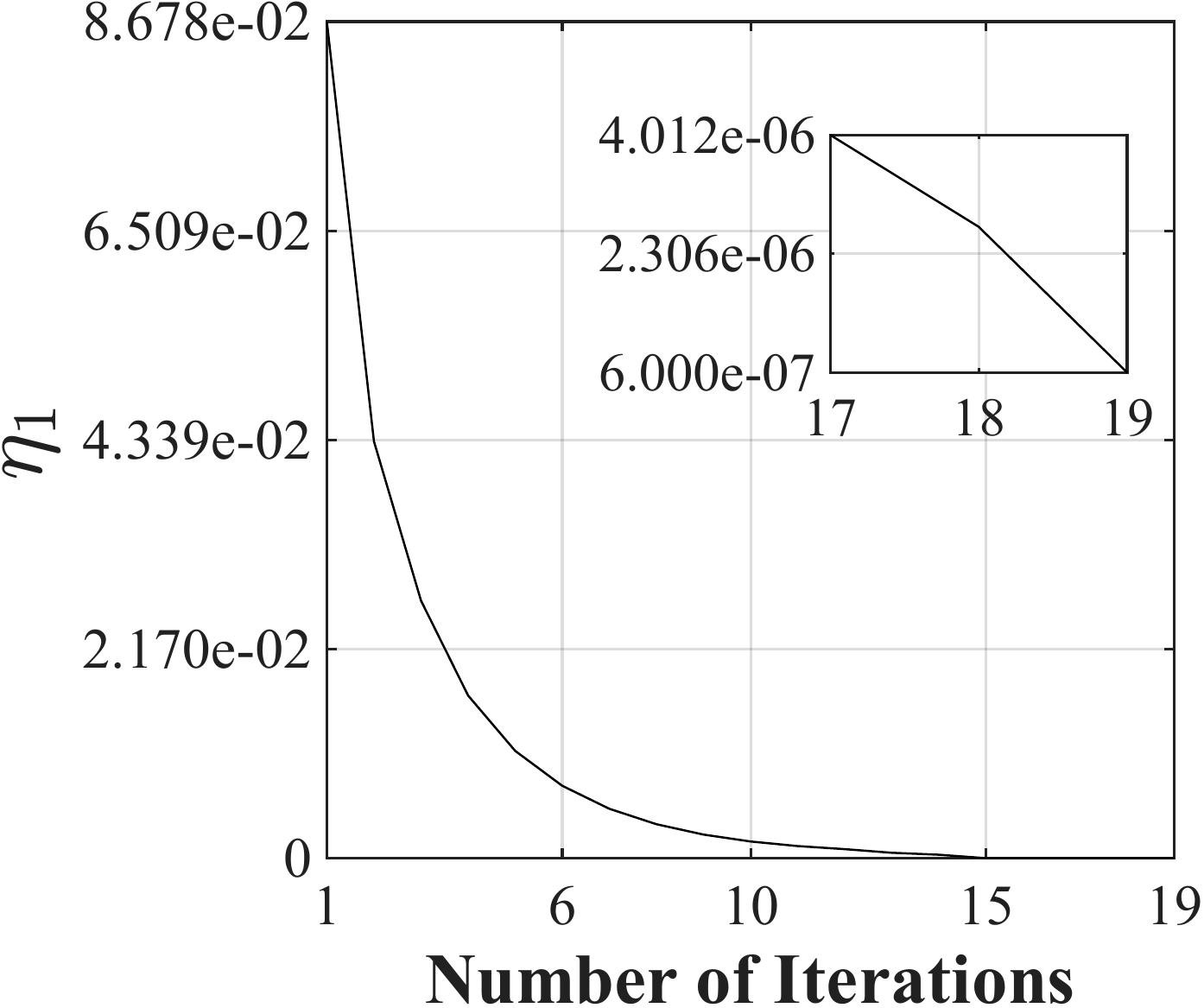}
    \caption{Convergence of $\eta_{1}$ with $h=1/80$}
    \label{subfig:smoothsq-convergence}
\end{subfigure}

\vspace{2mm}

\begin{subfigure}{0.49\textwidth}
    \centering
    \includegraphics[width=0.7\linewidth]{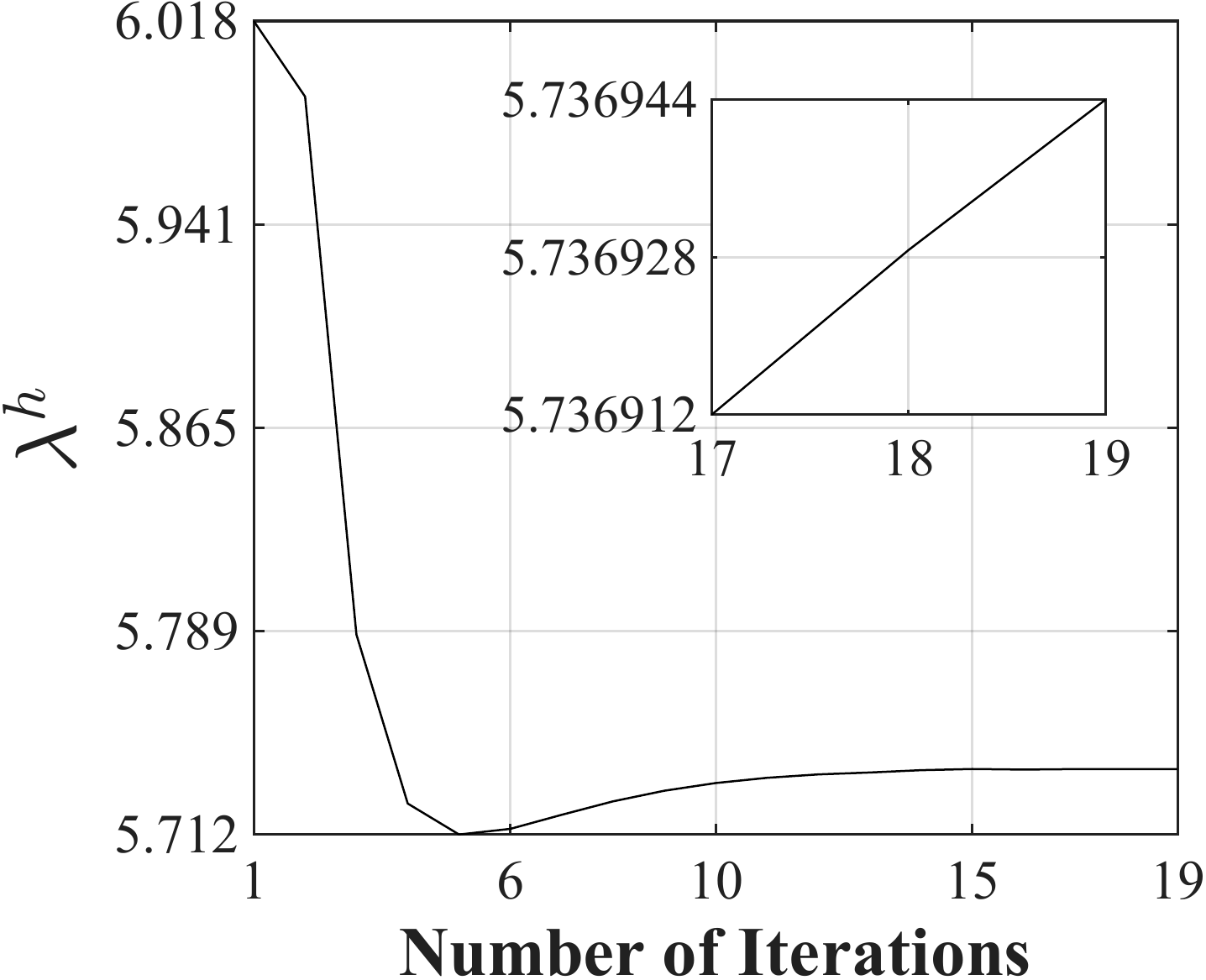}
    \caption{The value of $\lambda^h$ versus iteration $k$ with $h=1/80$}
    \label{subfig:smoothsq-lambda}
\end{subfigure}
\hfill
\begin{subfigure}{0.49\textwidth}
    \centering
    \includegraphics[width=0.7\linewidth]{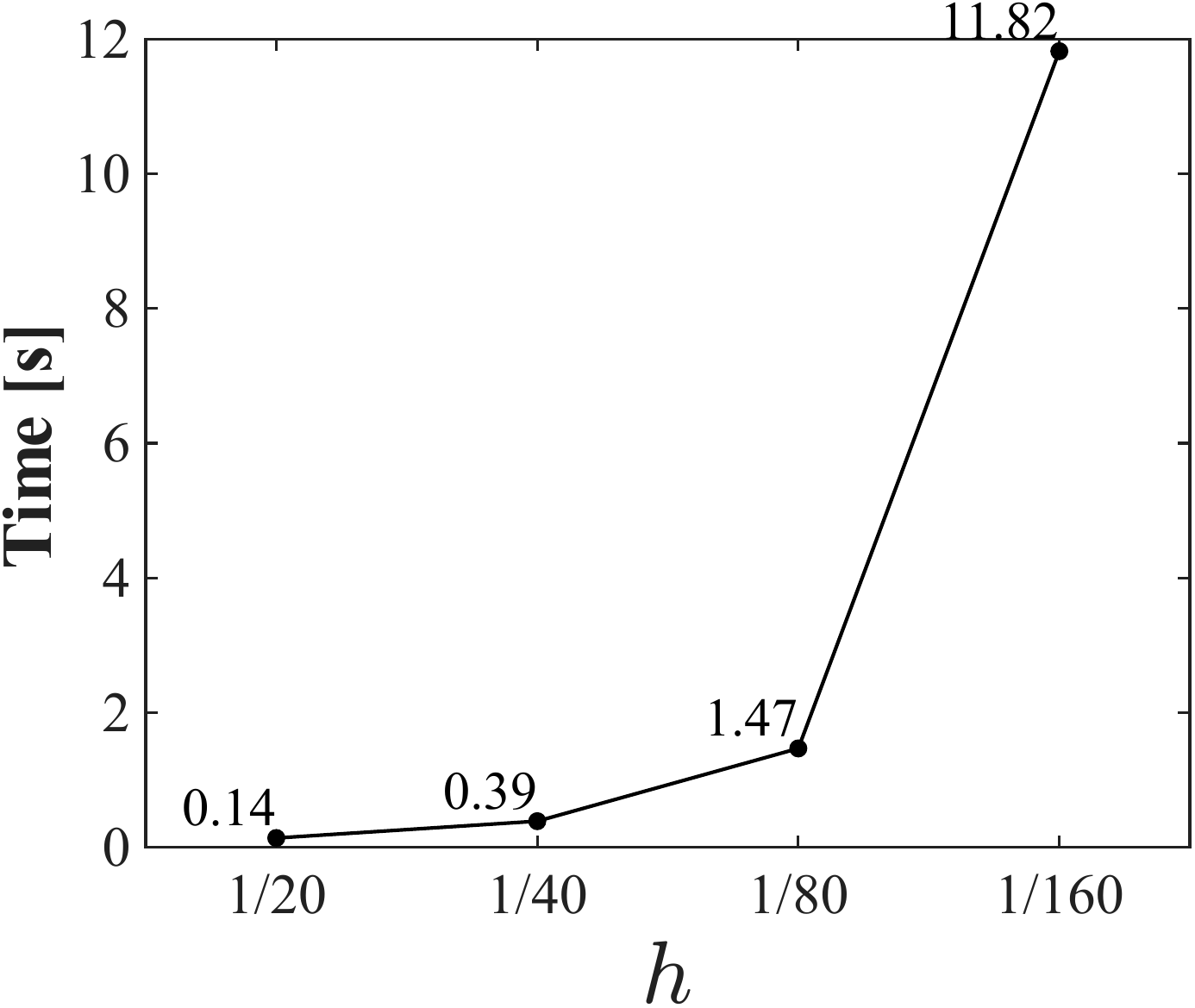}
    \caption{Computational time for different values of $h$}
    \label{subfig:smoothsq-time}
\end{subfigure}

\caption{Performance of the inexact AKI-FP method on the smoothed square \eqref{eq.smoothsq}}
\label{fig.smoothsq}
\end{figure}

\begin{table}
\centering
\caption{Numerical performances  of the inexact AKI-FP method and the original AKI method for solving the MAE problem  \eqref{eq_MAeig} on the smoothed square domain 
 \eqref{eq.smoothsq}. The notation is identical to Table \ref{Tab.disk1}.}
\label{Tab.smooth}
\begin{tabular}{@{}ccccccccc@{}}
\toprule
Algorithm & $h$ & Iter & sub-Iter & \multicolumn{2}{c}{$\eta_1$} & $\lambda^h$ & $\min(\bm{u}^h)$ & Time [s]\\
\midrule
iAKI-FP & 1/20 & 53 & 53 & \multicolumn{2}{c}{$9.52\text{E-}{07}$} & 5.733308886 & -0.996821459 & $\bm{0.14}$ \\
AKI-$10^{-6}$ & 1/20 & 10 & 119 & \multicolumn{2}{c}{$7.14\text{E-}{07}$} & 5.733308844 & -0.996820961 & 0.18\\
AKI-$10^{-10}$ & 1/20 & 9 & 507 & \multicolumn{2}{c}{$1.90\text{E-}{07}$} & 5.733308856 & -0.996821244 & 0.43 \\
\midrule
iAKI-FP & 1/40 & 22 &  41 &  \multicolumn{2}{c}{$5.77\text{E-}{07}$} &  5.736095201 &  -0.996759486 & $\bm{0.39}$ \\
AKI-$10^{-6}$ & 1/40 & 11 & 143 & \multicolumn{2}{c}{$7.60\text{E-}{07}$} & 5.736096366 & -0.996751821 &  0.68 \\
AKI-$10^{-10}$ & 1/40 & 9 & 489 & \multicolumn{2}{c}{$9.09\text{E-}{07}$} & 5.736096364 & -0.996739596 & 1.69 \\
\midrule
iAKI-FP & 1/80 & 19 & 40 & \multicolumn{2}{c}{$6.00\text{E-}{07}$} &  5.736943829 &  -0.996769700 & $\bm{1.47}$ \\
AKI-$10^{-6}$ & 1/80 & 11 & 184 &  \multicolumn{2}{c}{$7.59\text{E-}{07}$} & 5.736944292 & -0.996762487 & 4.15 \\
AKI-$10^{-10}$ & 1/80 & 9 & 702 & \multicolumn{2}{c}{$9.08\text{E-}{07}$} & 5.736944292 & -0.996750288 & 14.16 \\
\midrule
iAKI-FP & 1/160 & 17 & 73 &  \multicolumn{2}{c}{$1.27\text{E-}{07}$} &  5.737144091 &  -0.996812285  & $\bm{11.82}$ \\
AKI-$10^{-6}$ & 1/160 & 11 & 244 & \multicolumn{2}{c}{$7.59\text{E-}{07}$} & 5.737144110 & -0.996805393 & 32.43\\
AKI-$10^{-10}$ & 1/160 & 9 & 1016 & \multicolumn{2}{c}{$9.08\text{E-}{07}$} & 5.737144114 & -0.996793198 &  125.28  \\
\bottomrule
\end{tabular}
\end{table}

 Table \ref{Tab.smooth} shows the computational results of the inexact AKI-FP method with a dynamically adjusted tolerance \eqref{eq:dynamic-adj}, as well as the original AKI method for solving the MAE problem \eqref{eq_MAeig} on the smoothed square domain \eqref{eq.smoothsq}. 
 One can observe from the table that all methods yield identical eigenvalues $\lambda^h$  to six decimal places when $ h = 1/160$. 
 In terms of computation time, the inexact AKI-FP method is more than $11$ times faster than the AKI method ($\texttt{tol-inner}\equiv10^{-10}$)  when $h=1/160$.

\paragraph{Example 4. } Consider the MAE problem \eqref{eq_MAeig} on the unit  square domain 
\begin{equation}
   	\Omega=\{(x,y)\mid 0<x<1, 0<y< 1\}.
	\label{eq.sq}
\end{equation}
Figure \ref{subfig:square} illustrates the mesh we used. 
Figure \ref{fig.sq} presents the numerical results obtained using the inexact AKI-FP method with dynamically adjusted tolerance \eqref{eq:dynamic-adj} for this domain.  

The computed solution profile for $h=1/80$ is shown in Figure \ref{subfig:sq-solution}. Figures \ref{subfig:sq-convergence} and \ref{subfig:sq-lambda} depict the convergence history of the relative error $\eta_1$ and the computed eigenvalue estimate $\lambda^h$, respectively, for the case $h=1/80$. 
 We should mention that the total number of iterations is $243$ and we display only the first $30$ steps, which are sufficient to illustrate the steady decrease in the error and the rapid convergence of the eigenvalue estimate. 
Figure \ref{subfig:sq-time} illustrates the computation time for the inexact AKI-FP method with different mesh parameters $h$.

Table \ref{Tab.sq} compares the detailed performance of the inexact AKI-FP method with the original AKI method for the MAE problem \eqref{eq_MAeig} on the unit square domain \eqref{eq.sq}. 
In contrast to the smooth boundary cases, while the number of outer iterations remains largely unchanged, the count of Poisson equation solves required within the subproblems has increased. 
Nevertheless, the proposed inexact AKI-FP method is still capable of efficiently solving this non-smooth problem, and the computed eigenvalues $\lambda^h$ and minimum value $\min(\bm{u}^h)$ are nearly identical to those obtained using the original AKI method across all tested mesh sizes $h$. 
It can be seen that the inexact AKI-FP method takes approximately $38$ times less time than the AKI method $\texttt{tol-inner}\equiv10^{-10}$ when $h = 1/160$.

\begin{figure}
\centering
\begin{subfigure}{0.49\textwidth}
    \centering
    \includegraphics[width=0.7\linewidth]{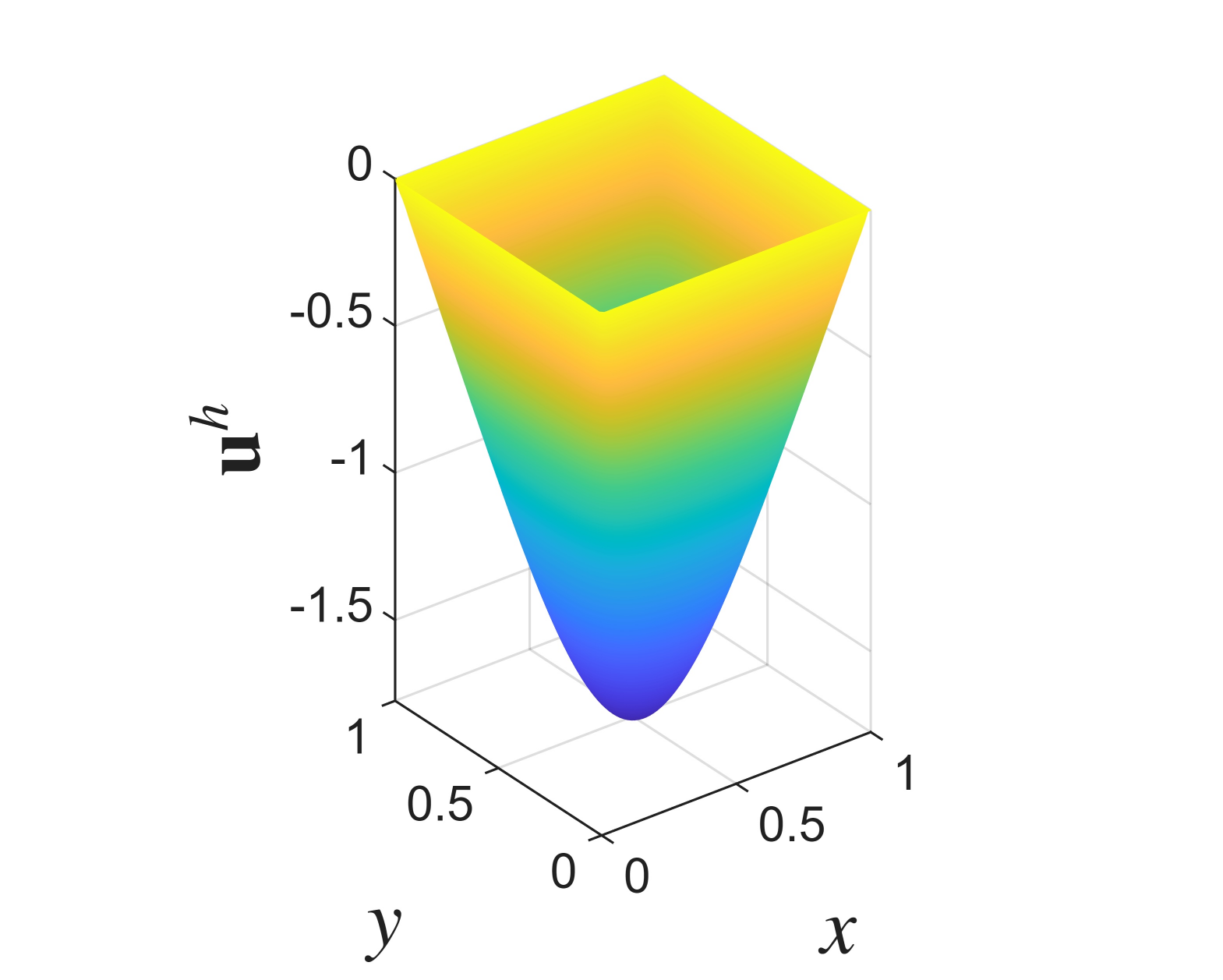}
    \caption{The computed solution profile with $h=1/80$}
    \label{subfig:sq-solution}
\end{subfigure}
\hfill
\begin{subfigure}{0.49\textwidth}
    \centering
    \includegraphics[width=0.7\linewidth]{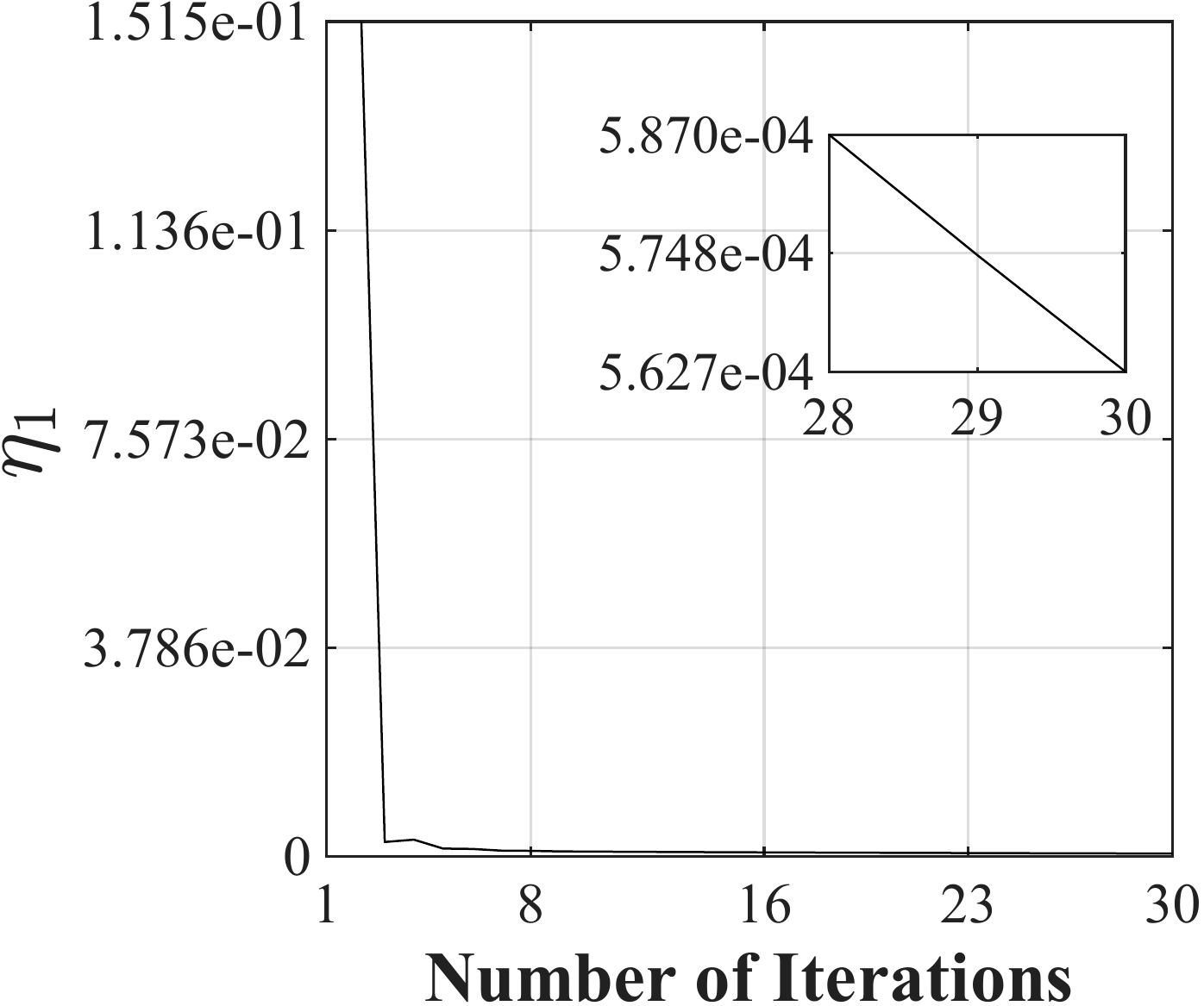}
    \caption{The convergence of $\eta_{1}$ with $h=1/80$}
    \label{subfig:sq-convergence}
\end{subfigure}

\vspace{2mm}

\begin{subfigure}{0.49\textwidth}
    \centering
    \includegraphics[width=0.7\linewidth]{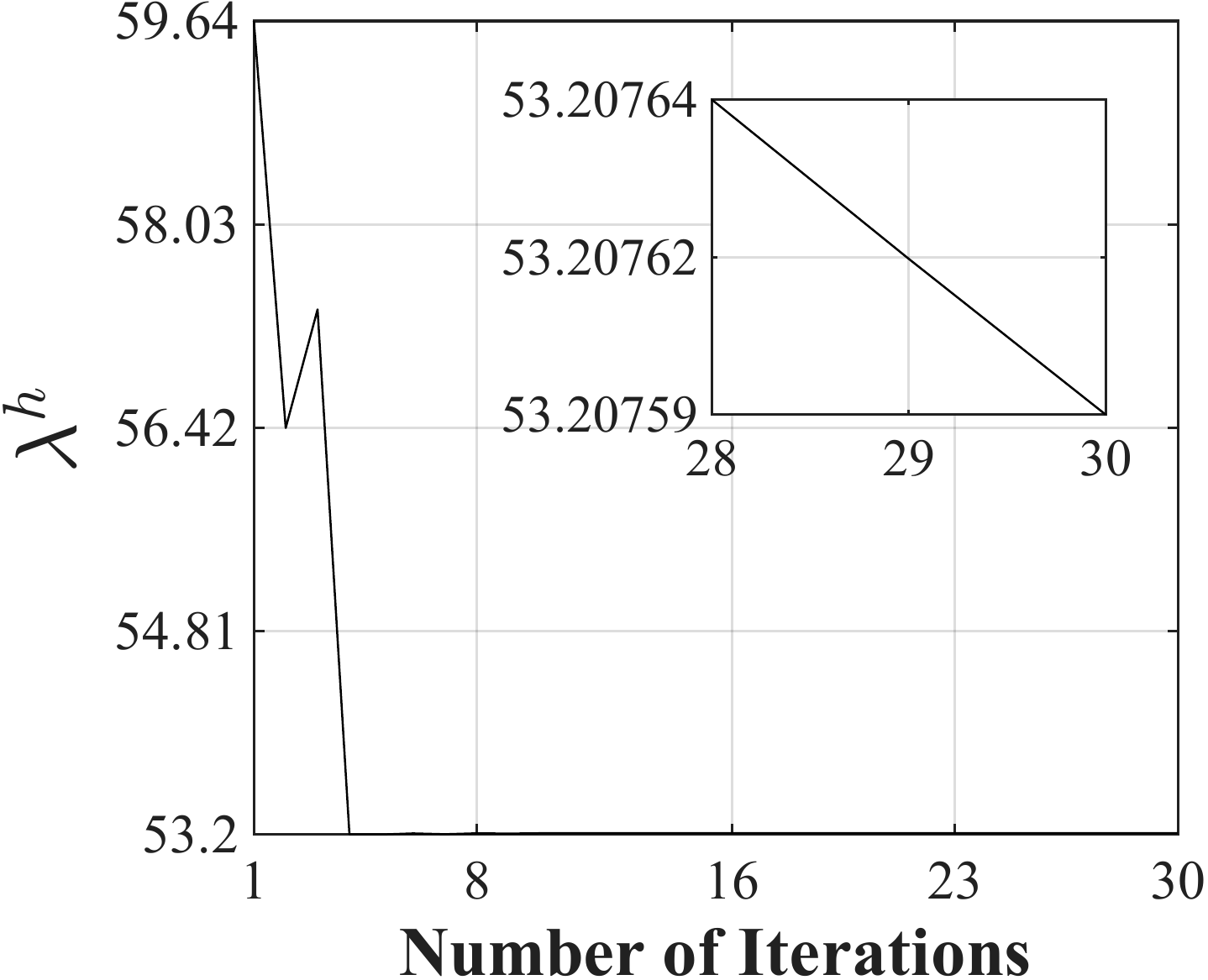}
    \caption{The value of $\lambda^h$ versus iteration $k$ with $h=1/80$}
    \label{subfig:sq-lambda}
\end{subfigure}
\hfill
\begin{subfigure}{0.49\textwidth}
    \centering
    \includegraphics[width=0.7\linewidth]{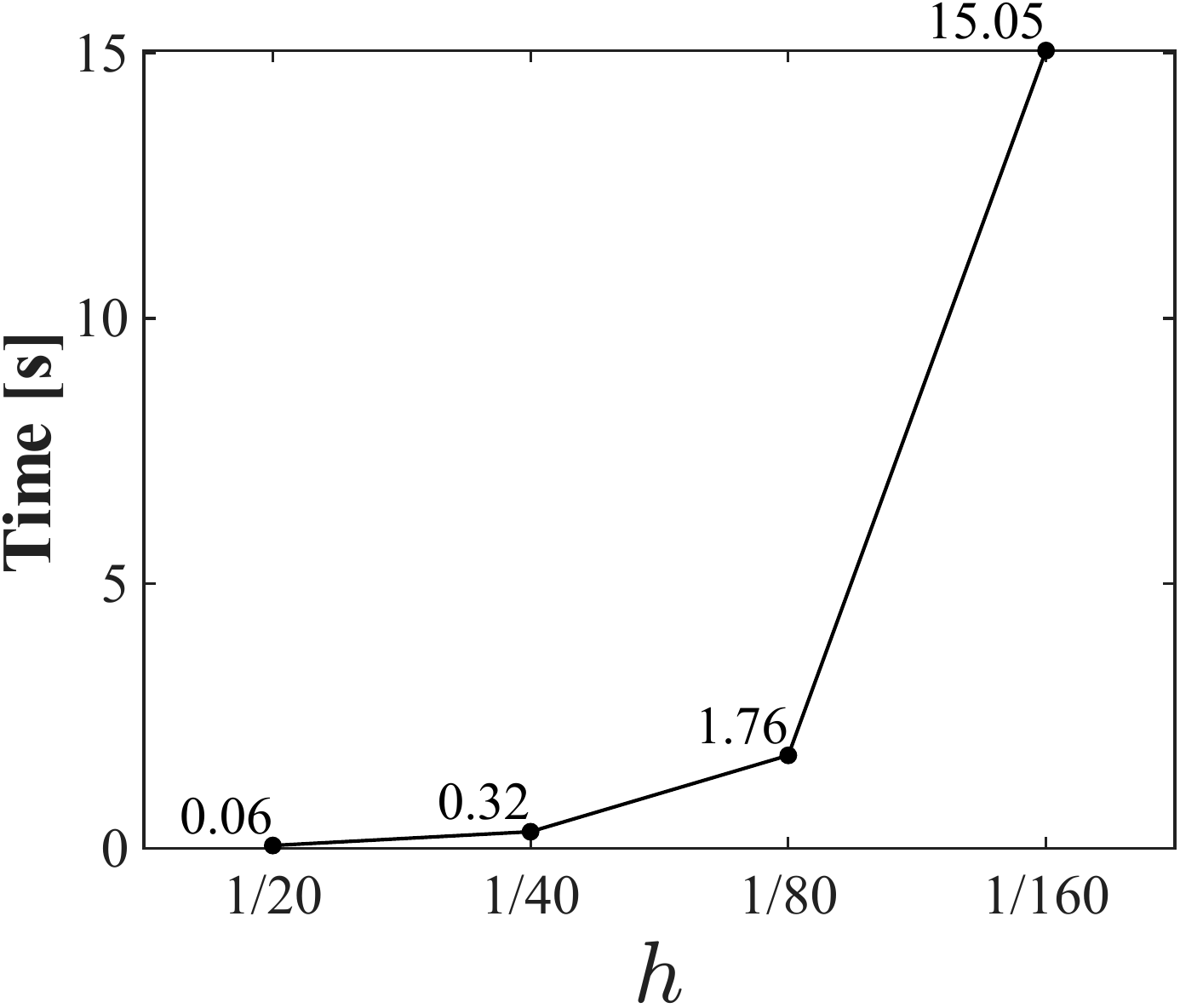}
    \caption{Computation time for different values of $h$}
    \label{subfig:sq-time}
\end{subfigure}

\caption{ 
 Performance of the inexact AKI-FP method on the unit square domain \eqref{eq.sq}}
\label{fig.sq}
\end{figure}

\begin{table}
\centering
\caption{Numerical performances of the inexact AKI-FP method and the original AKI method for solving the MAE problem  \eqref{eq_MAeig} on the 
unit square domain \eqref{eq.sq}. The notation is identical to Table \ref{Tab.disk1}.}
\label{Tab.sq}
\begin{tabular}{@{}ccccccccc@{}}
\toprule
Algorithm & $h$ & Iter & sub-Iter & \multicolumn{2}{c}{$\eta_1$} & $\lambda^h$ & $\min(\bm{u}^h)$ & Time [s]\\
\midrule
iAKI-FP & 1/20 & 144 & 144 & \multicolumn{2}{c}{$9.72\text{E-}{07}$} & 57.300303054 & -1.799393375 & $\bm{0.06}$ \\
AKI-$10^{-6}$ & 1/20 & 8 & 370 & \multicolumn{2}{c}{$6.48\text{E-}{07}$} & 57.300310360 & -1.799393630 & 0.08 \\
AKI-$10^{-10}$ & 1/20 & 8 & 1177 & \multicolumn{2}{c}{$5.39\text{E-}{07}$} & 57.300316507 & -1.799393782 & 0.22 \\
\midrule
iAKI-FP & 1/40 & 164&  173 &  \multicolumn{2}{c}{$9.87\text{E-}{07}$} & 54.154562523 & -1.780351125 & $\bm{0.32}$ \\
AKI-$10^{-6}$ & 1/40 & 9 & 905 & \multicolumn{2}{c}{$6.37\text{E-}{07}$} & 54.154733468 & -1.780378288 &  0.78 \\
AKI-$10^{-10}$ & 1/40 & 9 & 3584 & \multicolumn{2}{c}{$6.13\text{E-}{07}$} & 54.154708634 & -1.780377855 &  2.66 \\
\midrule
iAKI-FP & 1/80 &  241 & 306 & \multicolumn{2}{c}{$9.99\text{E-}{07}$} & 53.206715182 & -1.774914213 & $\bm{1.76}$ \\
AKI-$10^{-6}$ & 1/80 & 9 & 2747 & \multicolumn{2}{c}{$3.84\text{E-}{07}$} & 53.206711116 & -1.774948410 &  10.06 \\
AKI-$10^{-10}$ & 1/80 & 9 & 12693 & \multicolumn{2}{c}{$3.75\text{E-}{07}$} & 53.206703014 & -1.774948095 & 46.15 \\
\midrule
iAKI-FP & 1/160 & 243 &  542 &  \multicolumn{2}{c}{$9.96\text{E-}{07}$} & 52.823026411 & -1.772222292 & $\bm{15.05}$ \\
AKI-$10^{-6}$ & 1/160 & 9 & 7408 & \multicolumn{2}{c}{$2.85\text{E-}{07}$} & 52.823025681 & -1.772261303 & 166.35 \\
AKI-$10^{-10}$ & 1/160 & 9 & 25619 & \multicolumn{2}{c}{$2.77\text{E-}{07}$} & 52.823025571 & -1.772261136 & 574.81\\
\bottomrule
\end{tabular}
\end{table}

\subsection{Numerical experiments in \texorpdfstring{$\mathbb{R}^3$}{num R3}}

In this subsection, we consider the inexact AKI-FP method (Algorithm \ref{alg:fix}) for 3D MAE problems on a smoothed cube, a unit ball, and an ellipsoid, and set $g = 0.25$ in \eqref{initial condition} and $(\zeta_1, \zeta_2) = (0.99, 0.9)$ in \eqref{eq:dynamic-adj}.
For the original AKI method, we choose $\texttt{tol-inner} \equiv 10^{-6}$ for comparison. 
It is worth noting that even with this tolerance, achieving such accuracy can be challenging for the subproblems of certain examples in the 3D cases.
Therefore, we set an extra condition that the maximum number of inner iterations is no greater than $500$.

\paragraph{Example 5.} Consider the MAE problem \eqref{eq_MAeig} on the unit ball domain with
\begin{equation}
   	\Omega=\{(x,y,z)\mid x^2+y^2+z^2 < 1\}.
	\label{eq.ball}
\end{equation}
Figure \ref{subfig:ball} illustrates the mesh we used.
We demonstrate the performance of the proposed inexact AKI-FP method using the finite element method on tetrahedral meshes with characteristic lengths $h = 1/16, 1/24, 1/32$, as reported in Figure  \ref{fig.ball} and Table \ref{Tab.ball}.

Figure \ref{fig.ball} presents the detailed results for the inexact AKI-FP method for $h=1/24$. Figures \ref{subfig:ball-convergence} and \ref{subfig:ball-lambda} show the convergence of the relative error $\eta_1$ and the computed eigenvalue estimate $\lambda^h$ versus the iteration count $k$, respectively. 
One can see that $\eta_1$ decreases to the tolerance of $10^{-6}$, while the sequence $\lambda^h$ approaches the corresponding eigenvalue. 

Table \ref{Tab.ball} shows the computational results of the inexact AKI-FP method with a dynamically adjusted tolerance \eqref{eq:dynamic-adj} and the original AKI method on the unit ball domain \eqref{eq.ball}. 
The relative residual $\eta_1$ remains of the order $10^{-7}$, indicating that the fixed-point iteration achieves stable accuracy. 
On the finest mesh, the inexact AKI-FP method is about $34$ times faster than the AKI method with $\texttt{tol-inner}\equiv10^{-6}$. 
Moreover, the computed eigenvalue aligns with the theoretical bounds derived in the paper \cite{le2025large}. 
Specifically, it was shown that for the unit ball in $\mathbb{R}^3$, the exact value of the eigenvalue must lie within the range $[20,35]$.

\begin{figure}[H]
\centering
\begin{subfigure}{0.49\textwidth}
    \centering
    \includegraphics[width=0.78\linewidth]{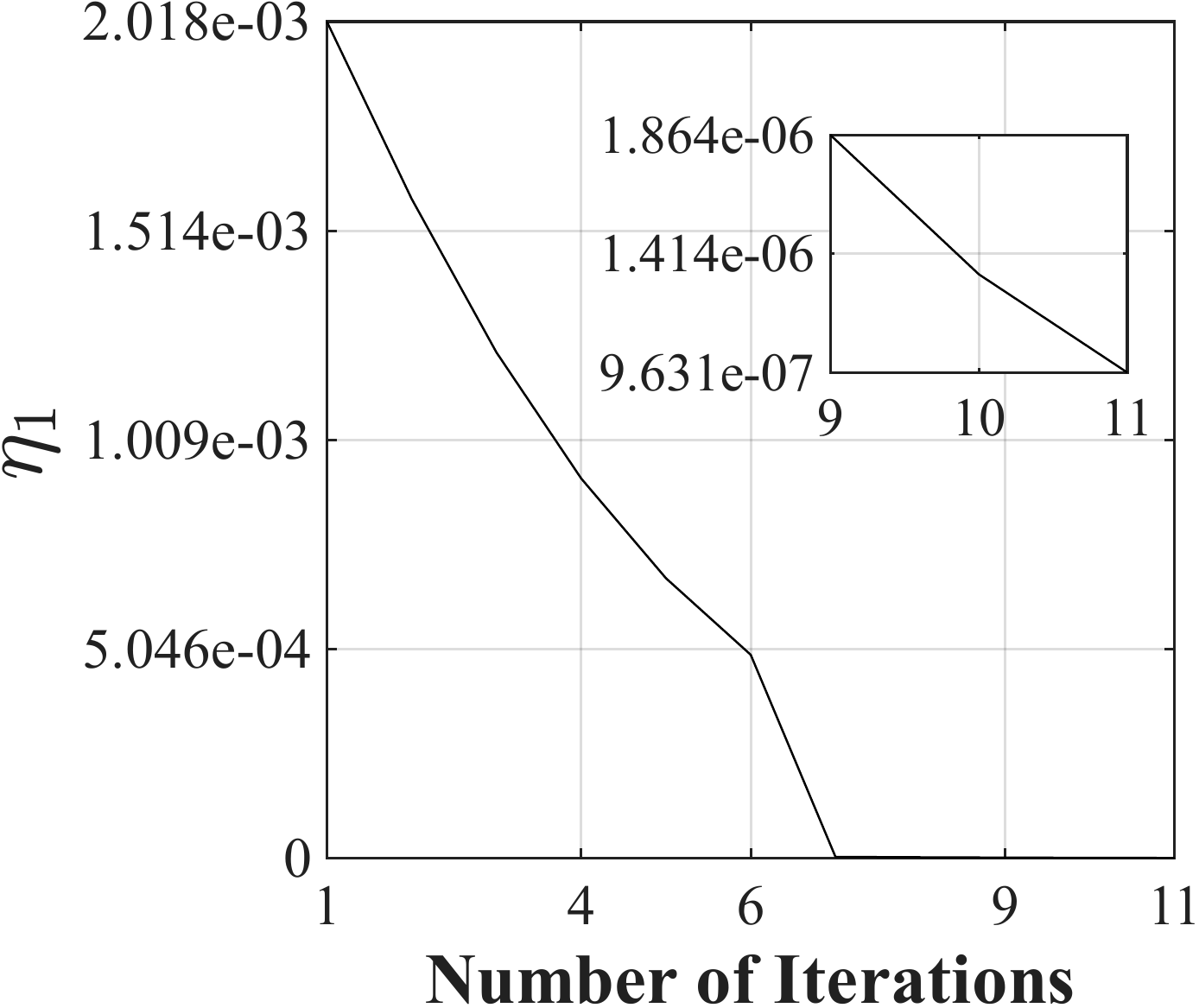}
    \caption{The convergence of $\eta_{1}$ with $h=1/24$}
    \label{subfig:ball-convergence}
\end{subfigure}
\hfill
\begin{subfigure}{0.49\textwidth}
    \centering
    \includegraphics[width=0.8\linewidth]{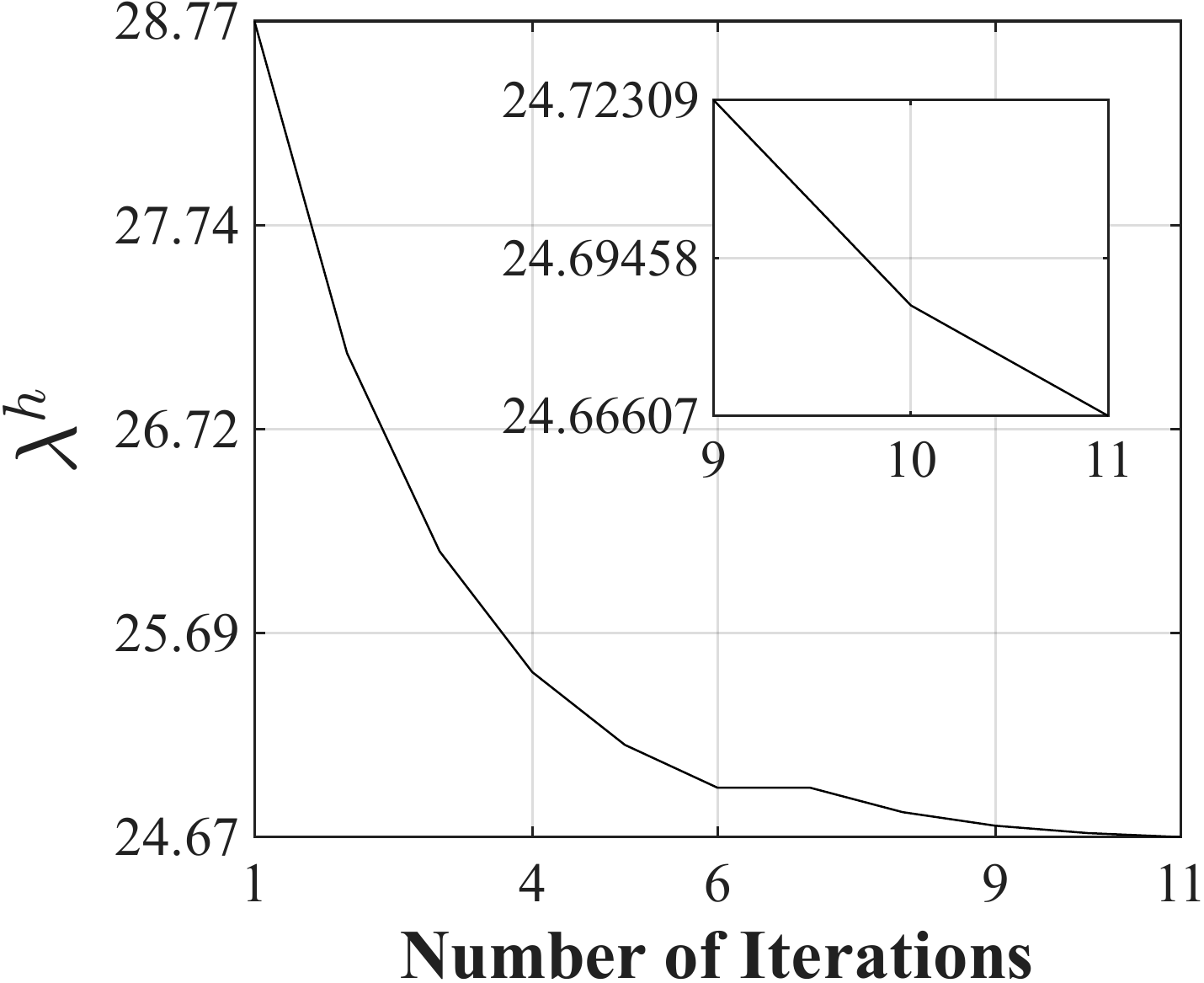}
    \caption{The value of $\lambda^h$ versus iteration $k$ with $h=1/24$}
    \label{subfig:ball-lambda}
\end{subfigure}

\caption{Performance of the inexact AKI-FP method on the unit ball domain \eqref{eq.ball}}
\label{fig.ball}
\end{figure}

\begin{table}[H]
\centering
\caption{Numerical performances of the inexact AKI-FP method and the original AKI method for solving the MAE problem  \eqref{eq_MAeig} on the 
 unit ball domain  \eqref{eq.ball}. The notation is identical to Table \ref{Tab.disk1}.}
\label{Tab.ball}
\begin{tabular}{@{}ccccccccc@{}}
\toprule
Algorithm & $h$ & Iter & sub-Iter & \multicolumn{2}{c}{$\eta_1$} & $\lambda^h$ & $\min(\bm{u}^h)$ & Time [s]\\
\midrule
iAKI-FP & 1/16 & 18 & 60  & \multicolumn{2}{c}{$6.54\text{E-}{07}$} & 24.623565100 & -1.166148725 & $\bm{4.03}$ \\
AKI-$10^{-6}$ & 1/16 & 8  & 178 & \multicolumn{2}{c}{$6.85\text{E-}{07}$} & 24.623594185 & -1.166107699 & 8.65 \\
\midrule
iAKI-FP & 1/24 & 11 & 10  & \multicolumn{2}{c}{$9.63\text{E-}{07}$} &  24.666071872 & -1.155181630 & $\bm{7.15}$ \\
AKI-$10^{-6}$ & 1/24 & 6  & 134  & \multicolumn{2}{c}{$9.74\text{E-}{07}$} & 24.660132098 & -1.156046554 & 36.43 \\
\midrule
iAKI-FP & 1/32 & 12 & 11 & \multicolumn{2}{c}{$8.89\text{E-}{07}$} & 24.662240036 &  -1.158627146 & $\bm{27.20}$ \\
AKI-$10^{-6}$ & 1/32 & 10  & 1007 & \multicolumn{2}{c}{$8.00\text{E-}{07}$} & 24.649171102 & -1.166158550 & 923.39  \\
\bottomrule
\end{tabular}
\end{table}

\paragraph{Example 6.} Consider the MAE problem \eqref{eq_MAeig} on the ellipsoid domain
\begin{equation}
   	\Omega=\{(x,y,z)\mid x^2+1.5y^2+2z^2 < 1\}.
	\label{eq.ellipsoid}
\end{equation}
 Figure \ref{subfig:ellipsoid} illustrates the mesh we used.  
The algorithms are tested with varying characteristic lengths $h = 1/16, 1/24, 1/32$. 

The results shown in Figure \ref{fig.ellipsoid} are similar to the unit ball domain example. 
Figure \ref{subfig:ellipsoid-convergence} illustrates the downward trajectory of the relative error $\eta_1$, while Figure \ref{subfig:ellipsoid-lambda} presents the converging path of the computed eigenvalues $\lambda^h$.

Table \ref{Tab.ellipsoid} presents the computational results of the inexact AKI-FP method and the original AKI method on the ellipsoid domain \eqref{eq.ellipsoid}. 
The residual $\eta_1$ is consistently of the order $10^{-7}$, confirming the robustness of the fixed-point iteration. 
On the finest mesh, the inexact AKI-FP method is about $5$ times faster than the AKI method with $\texttt{tol-inner}\equiv10^{-6}$.

\begin{figure}[H]
\centering
\begin{subfigure}{0.49\textwidth}
    \centering
    \includegraphics[width=0.78\linewidth]{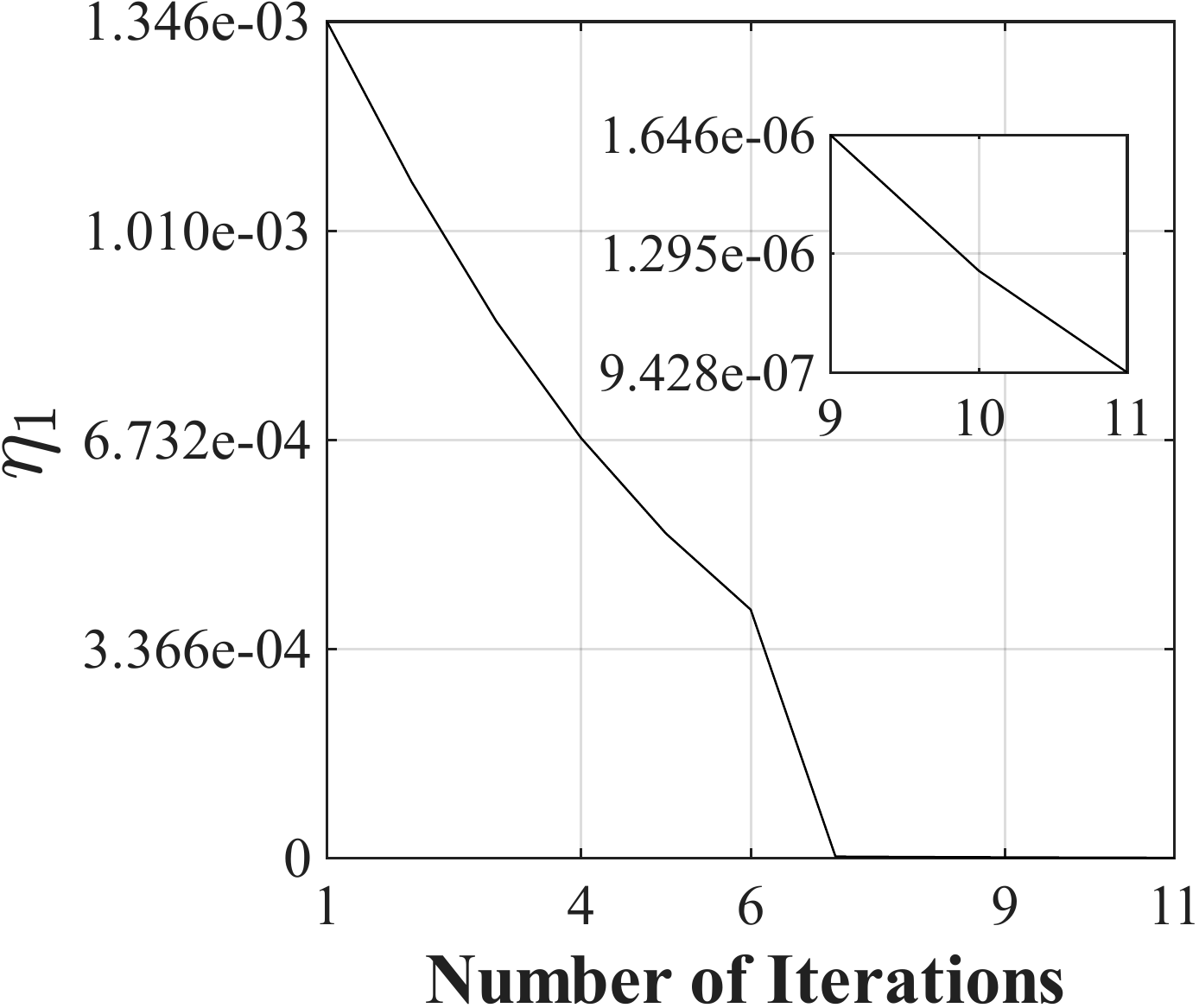}
    \caption{The convergence of $\eta_{1}$ with $h=1/24$}
    \label{subfig:ellipsoid-convergence}
\end{subfigure}
\hfill
\begin{subfigure}{0.49\textwidth}
    \centering
    \includegraphics[width=0.8\linewidth]{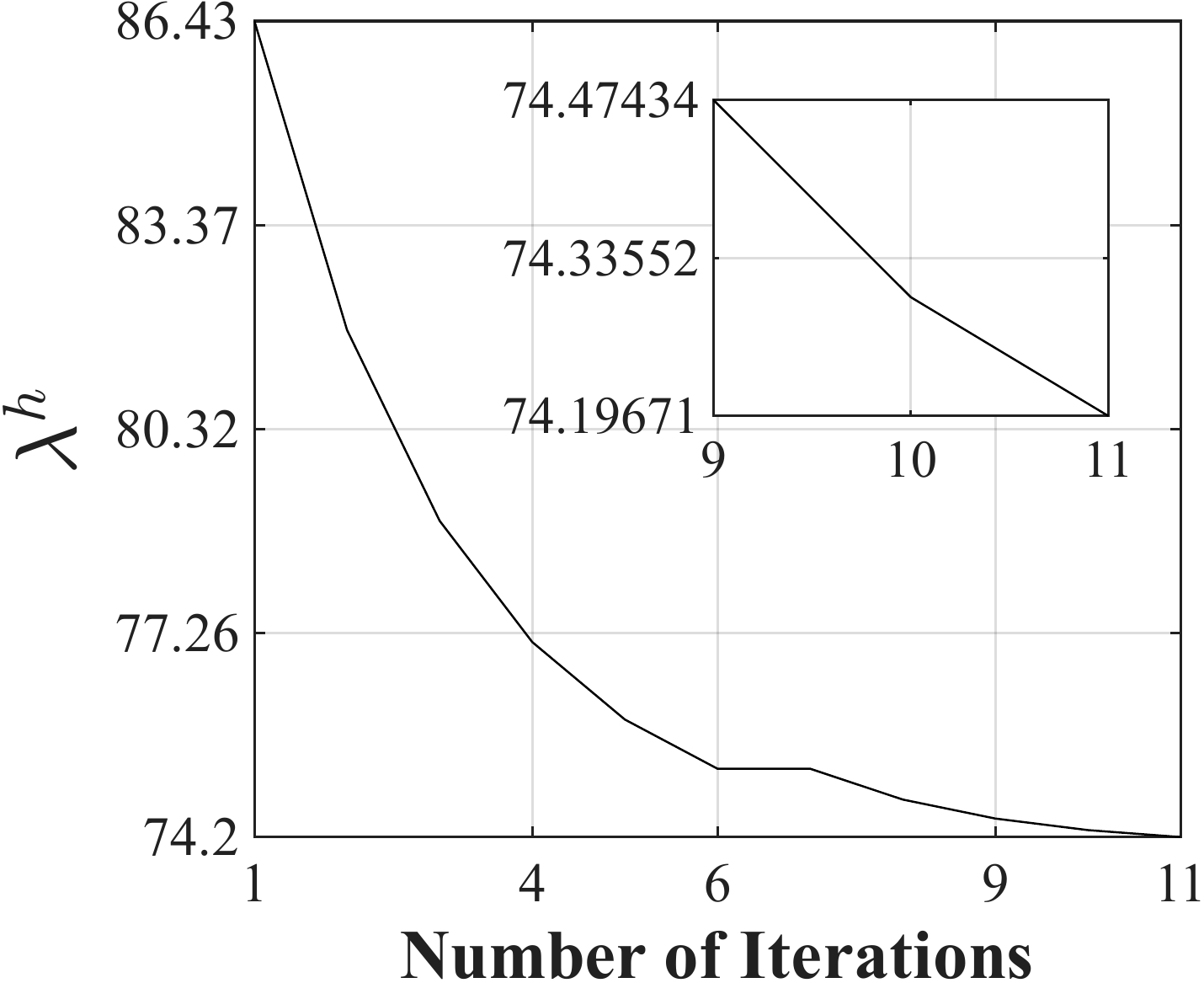}
    \caption{The value of $\lambda^h$ versus iteration $k$ with $h=1/24$}
    \label{subfig:ellipsoid-lambda}
\end{subfigure}

\caption{Performance of the inexact AKI-FP method on the ellipsoid domain \eqref{eq.ellipsoid}}
\label{fig.ellipsoid}
\end{figure}

\begin{table}[H]
\centering
\caption{Numerical performances  of the inexact AKI-FP method (denoted ``iAKI-FP'') and the original AKI method (denoted ``AKI-$10^{-6}$'') for solving the MAE problem \eqref{eq_MAeig} on the ellipsoid domain  \eqref{eq.ellipsoid}.  
The notation is identical to Table \ref{Tab.disk1}.}
\label{Tab.ellipsoid}
\begin{tabular}{@{}ccccccccc@{}}
\toprule
Algorithm & $h$ & Iter & sub-Iter & \multicolumn{2}{c}{$\eta_1$} & $\lambda^h$ & $\min(\bm{u}^h)$ & Time [s]\\
\midrule
iAKI-FP & 1/16 & 43 & 43 & \multicolumn{2}{c}{$9.98\text{E-}{07}$} &  74.045923981 & -1.536002186 & $\bm{2.26}$ \\
AKI-$10^{-6}$ & 1/16 & 8  & 124 & \multicolumn{2}{c}{$5.33\text{E-}{07}$} & 74.046149885 & -1.535957548 & 3.03\\
\midrule
iAKI-FP & 1/24 & 11 &10 &  \multicolumn{2}{c}{$9.43\text{E-}{07}$} & 74.196706807 & -1.516238370 & $\bm{3.32}$ \\
AKI-$10^{-6}$ & 1/24 & 6  & 128 &  \multicolumn{2}{c}{$7.23\text{E-}{07}$} & 74.098853083 & -1.521981439 & 15.19 \\
\midrule
iAKI-FP & 1/32 & 11 & 10 & \multicolumn{2}{c}{$9.47\text{E-}{07}$} & 74.155816346 &-1.516792104  & $\bm{10.18}$ \\
AKI-$10^{-6}$ & 1/32 & 6  & 122 & \multicolumn{2}{c}{$7.20\text{E-}{07}$} & 74.053202968 & -1.522402340 & 48.02  \\
\bottomrule
\end{tabular}
\end{table}

\paragraph{Example 7.} Consider the MAE problem \eqref{eq_MAeig} on the smoothed cube domain
\begin{equation}
   	\Omega=\{(x,y,z)\mid (|x|^3+|y|^3+|z|^3)^{{1}/{3}} < 0.75\}.
	\label{eq.smoothedcube}
\end{equation}
Figure \ref{subfig:smoothedcube} illustrates the mesh we used.

Figures  \ref{subfig:smoothedcube-convergence} and \ref{subfig:smoothedcube-lambda} display the convergence of the relative error $\eta_1$ and the computed eigenvalue estimate $\lambda^h$ versus the iteration count $k$, respectively, obtained with $h=1/24$. 

Table \ref{Tab.smoothedcube} summarizes the computational results of the inexact AKI-FP method and the original AKI method on the smoothed cube domain \eqref{eq.smoothedcube}. The residual $\eta_1$ remains at the level of $10^{-7}$, which illustrates the stable accuracy of the fixed-point scheme. 
On the finest mesh, the inexact AKI-FP method is about $9$ times faster than the AKI method with $\texttt{tol-inner}\equiv10^{-6}$.

In summary, the numerical results demonstrate that the proposed inexact AKI method offers comparable robustness to the original AKI method while achieving the same level of accuracy several times faster.

\begin{figure}[ht]
\centering
\begin{subfigure}{0.49\textwidth}
    \centering
    \includegraphics[width=0.78\linewidth]{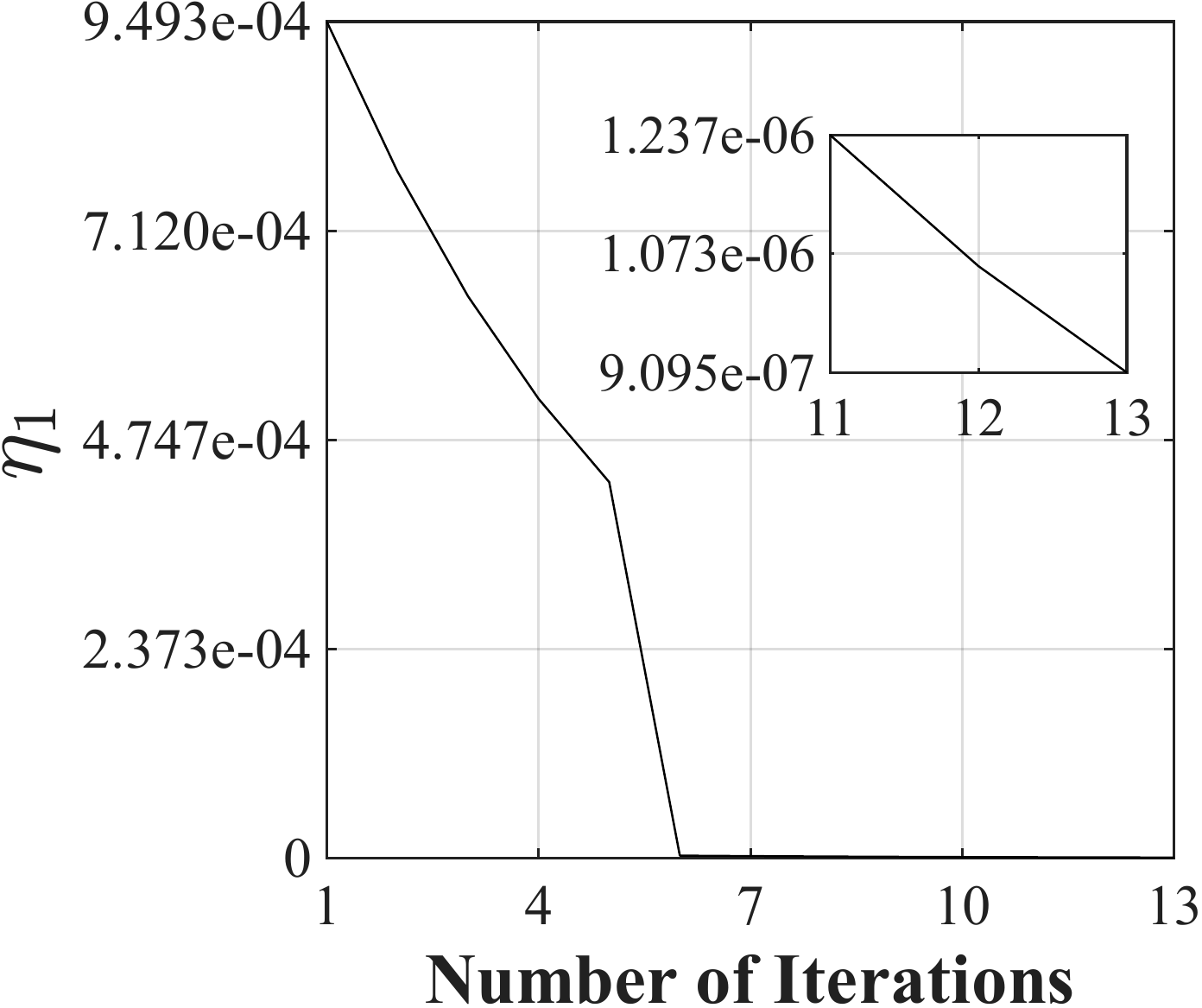}
    \caption{The convergence of $\eta_{1}$ with $h=1/24$}
    \label{subfig:smoothedcube-convergence}
\end{subfigure}
\hfill
\begin{subfigure}{0.49\textwidth}
    \centering
    \includegraphics[width=0.8\linewidth]{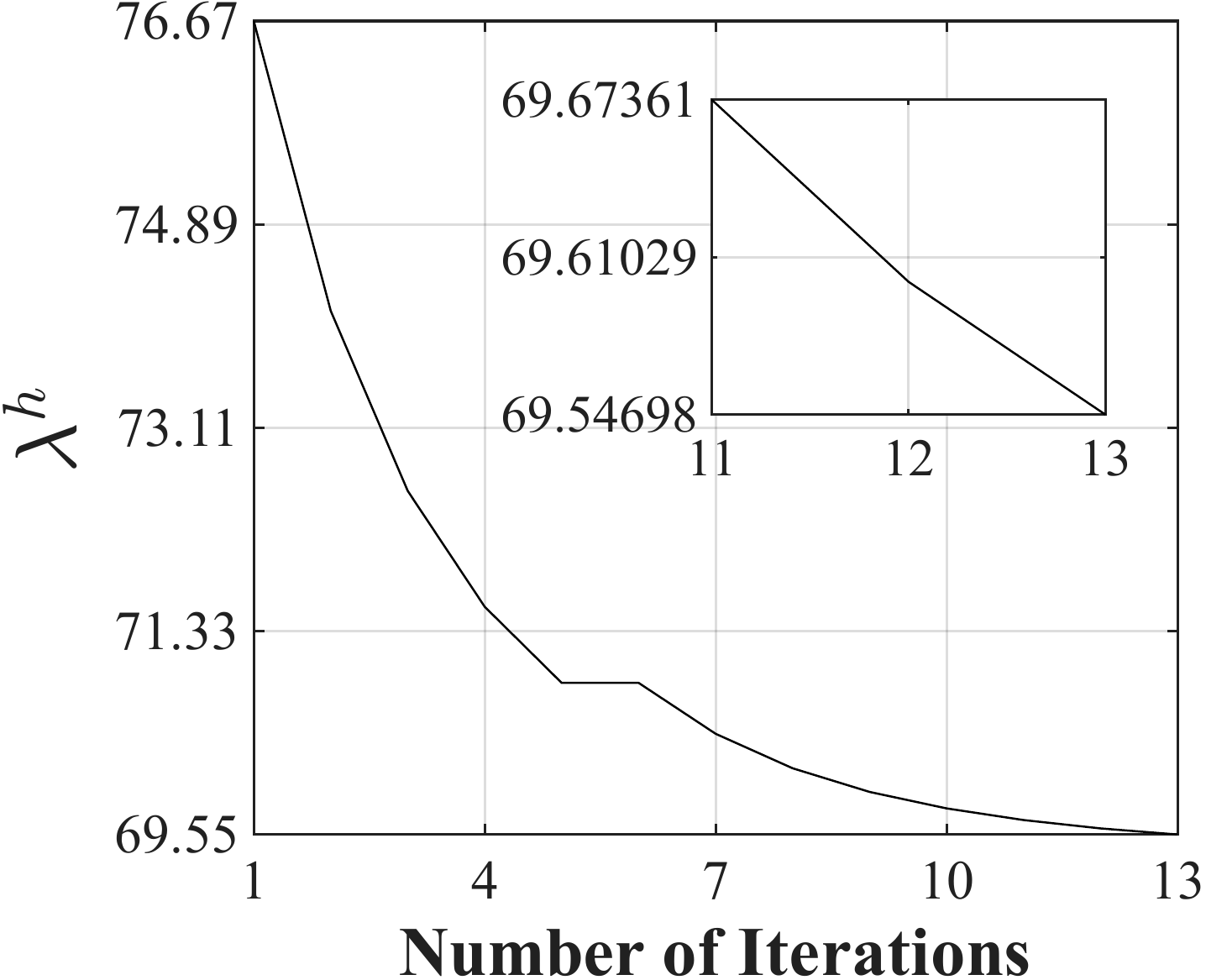}
    \caption{The value of $\lambda^h$ versus iteration $k$ with $h=1/24$}
    \label{subfig:smoothedcube-lambda}
\end{subfigure}

\caption{Numerical results obtained using the inexact AKI-FP method on the smoothed cube domain \eqref{eq.smoothedcube}}
\label{fig.smoothedcube}
\end{figure}

\begin{table}[ht]
\centering
\caption{Numerical performances of the inexact AKI-FP method and the original AKI method for solving the MAE problem  \eqref{eq_MAeig} on the 
 smoothed cube domain  \eqref{eq.smoothedcube}. The notation is identical to Table \ref{Tab.disk1}.}
\label{Tab.smoothedcube}
\begin{tabular}{@{}ccccccccc@{}}
\toprule
Algorithm & $h$ & Iter & sub-Iter & \multicolumn{2}{c}{$\eta_1$} & $\lambda^h$ & $\min(\bm{u}^h)$ & Time [s]\\
\midrule
iAKI-FP & 1/16 & 44 & 69 & \multicolumn{2}{c}{$8.96\text{E-}{07}$} & 69.245540951 &-1.523322925& $\bm{2.84}$ \\
AKI-$10^{-6}$ & 1/16 & 8 & 255 & \multicolumn{2}{c}{$3.48\text{E-}{07}$} & 69.245276351 & -1.523261475 & 5.89 \\
\midrule
iAKI-FP & 1/24 & 13 & 12 &   \multicolumn{2}{c}{$9.09\text{E-}{07}$}& 69.546975024 & -1.509177099 & $\bm{3.85}$ \\
AKI-$10^{-6}$ & 1/24 & 6 & 177 & \multicolumn{2}{c}{$6.07\text{E-}{07}$} & 69.427400110 & -1.509410659 & 22.14 \\
\midrule
iAKI-FP & 1/32 & 12 & 37 &  \multicolumn{2}{c}{$8.22\text{E-}{07}$} &  69.416716416 & -1.519558826& $\bm{21.92}$ \\
AKI-$10^{-6}$ & 1/32 & 7 & 505 & \multicolumn{2}{c}{$8.77\text{E-}{07}$} & 69.450358108 & -1.512183397 & 207.50  \\
\bottomrule
\end{tabular}
\end{table}

\section{Conclusions}
\label{sec:conclusions}
In this paper, we proposed an efficient inexact variant of the Abedin-Kitagawa iteration method for solving the Monge-Amp{\`e}re eigenvalue problem (Eq. \eqref{eq_MAeig}) on bounded convex domains. Unlike the original method, which requires high-accuracy solutions to the subproblems at each iteration, the proposed approach asymptotically increases the accuracy of the subproblems, significantly enhancing computational efficiency. 
The convergence analysis for the proposed inexact method is provided.
Moreover, for the 2D and 3D cases, a fixed-point method is introduced
for the subproblems, with the convergence of the fixed-point method established under $\mathcal{C}^{2,\alpha}$ boundary conditions.
Numerical experiments across various cases demonstrate that the proposed method is highly efficient and effective, achieving significant improvements in computational efficiency over the original Abedin-Kitagawa iteration.
As a future research direction, it would be valuable to investigate how to rigorously establish the convergence of the fixed-point method in higher dimensions. 
Another possible direction is to extend the convergence analysis to situations with possibly negative error functions, while using the weaker assumptions that suffice for nonnegative error functions.
Besides, it is of interest to examine the applicability of such inexact fixed-point computation methods to the $k$-Hessian eigenvalue problems.

\normalsize
\appendix
\section{Proofs of technical results in Section \ref{sec:An Inexact AKI method}}
\label{appendix}
\subsection{Proof of Proposition \ref{inequality1}}
\label{appendixineq}
\begin{proof}
Since $u_{k+1}$ satisfies the perturbed subproblem \eqref{app sub}, multiplying both sides by $|u_{k+1}|$ and integrating over $\Omega$ gives
\begin{equation*}
    \int_{\Omega} |u_{k+1}|\det D^2 u_{k+1}\,\mathrm{d}\bm{x}
    = R(u_k)\int_{\Omega} |u_k|^d |u_{k+1}|\,\mathrm{d}\bm{x} 
    + \int_{\Omega}\varepsilon_k |u_{k+1}|\,\mathrm{d}\bm{x}.
\end{equation*}
By the definition of $R(u_{k+1})$ in \eqref{rayli}, this becomes
\begin{equation}
\label{lemma3.2.2}
   R(u_{k+1}) \|u_{k+1}\|_{L^{d+1}(\Omega)}^{d+1} 
   = R(u_k)\int_{\Omega} |u_k|^d |u_{k+1}|\,\mathrm{d}\bm{x} 
   + \int_{\Omega}\varepsilon_k |u_{k+1}|\,\mathrm{d}\bm{x}.
\end{equation}
For the first term on the right-hand side, the H\"older's inequality yields
\begin{equation*}
\int_{\Omega} |u_k|^d |u_{k+1}|\,\mathrm{d}\bm{x}
\le \|u_k\|_{L^{d+1}(\Omega)}^d \|u_{k+1}\|_{L^{d+1}(\Omega)}.
\end{equation*}
For the second term, noticing the error criterion  in Algorithm \ref{alg:inexact_AK}, applying the H\"older's inequality gives
\begin{equation}
\label{lemma3.2.4}
\int_{\Omega}\varepsilon_k |u_{k+1}|\,\mathrm{d}\bm{x} 
\le\xi_k\int_{\Omega}   |u_{k+1}|\,\mathrm{d}\bm{x} 
\le \left(\mathcal{L}^d(\Omega)\right)^{\frac{d}{d+1}}\xi_k \|u_{k+1}\|_{L^{d+1}(\Omega)}.
\end{equation}
Combining \eqref{lemma3.2.2} and \eqref{lemma3.2.4} gives
\begin{equation}
\label{lemma3.2.5}
   R(u_{k+1}) \|u_{k+1}\|_{L^{d+1}(\Omega)}^{d+1} 
   \le R(u_k) \|u_k\|_{L^{d+1}(\Omega)}^d \|u_{k+1}\|_{L^{d+1}(\Omega)} 
   + \left(\mathcal{L}^d(\Omega)\right)^{\frac{d}{d+1}}\xi_k \|u_{k+1}\|_{L^{d+1}(\Omega)}.
\end{equation}
From Proposition \ref{inftyinf}, $u_k \not\equiv 0$ for all $k\ge0$.
Then dividing both sides of \eqref{lemma3.2.5} by $\|u_{k+1}\|_{L^{d+1}(\Omega)}$ yields \eqref{inequalitye1}, completing the proof.
\end{proof}

\subsection{Proof of Proposition \ref{eventual smoothness1}}
\label{appendixsmoothness}
\begin{proof}
According to the perturbed subproblem \eqref{app sub} and the Aleksandrov maximum principle (Lemma \ref{Aleksandrov maximum principle}), for any $k\ge1$ and $\bm{x}\in\Omega$,
\begin{align*}
|u_{k}(\bm{x})|^d 
&\le C_d \diam(\Omega)^{d-1} \dist(\bm{x},\partial\Omega) 
   \int_{\Omega}\det D^2 u_{k}\,\mathrm{d}\bm{x} \\
&= C_d \diam(\Omega)^{d-1} \dist(\bm{x},\partial\Omega) 
   \Big(R(u_{k-1})\int_{\Omega}|u_{k-1}|^d\,\mathrm{d}\bm{x} + \int_{\Omega}\varepsilon_{k-1}\,\mathrm{d}\bm{x}\Big).
\end{align*}
Applying H\"older's inequality and Proposition \ref{inequality1}, we obtain
\[
|u_{k}(\bm{x})|^d \le 
C_d \diam(\Omega)^{d-1}\dist(\bm{x},\partial\Omega)\left(\mathcal{L}^d(\Omega)\right)^{\frac{1}{d+1}}
\Big(R(u_0)\|u_0\|_{L^{d+1}(\Omega)}^d + \left(\mathcal{L}^d(\Omega)\right)^{\frac{d}{d+1}}\sum\limits_{i=0}^{k-1} \xi_i\Big).
\]
Since $\{\xi_k\}_{k\geq0}$ is summable and $0<R(u_0)<\infty$ from Assumption \ref{ass_blanket1}, there exists $C_1(d,\Omega,u_0,M)>0$ such that
$\sup_\Omega |u_{k}| \le C_1(d,\Omega,u_0,M)$ for all $k\ge1$.
Thus, $u_{k}$ is uniformly bounded for all $k\geq1$. 
By the interior gradient estimate (c.f. \cite[Lemma 3.2.1]{gutierrez2016monge}) and the boundary condition, $u_{k}$ is uniformly Lipschitz on compact subsets and uniformly $\tfrac{1}{d}$-H\"older near $\partial\Omega$. 
Hence, there exists $C(d,\Omega,u_0,M)>0$ such that, for all $k\geq 1$,
\[
u_{k}\in \mathcal{C}^{0,{1}/{d}}(\overline{\Omega})\quad\mbox{and}\quad  \|u_{k}\|_{\mathcal{C}^{0,{1}/{d}}(\overline{\Omega})}\le C(d,\Omega,u_0,M).
\]

We now prove by induction that $u_{k}$ is strictly convex and belongs to $\mathcal{C}^{2(k-1),{1}/{d}}(\Omega)$ for all $k\geq 2$ by induction.  
Define $M_{k}:=\|u_{k}\|_{L^\infty(\Omega)}$. 
For the base case $k = 2$, by Proposition \ref{inftyinf}, we have $u_1\not\equiv0$. 
Convexity implies $u_1<0$ in $\Omega$. 
For $\tau_1\in(0,M_2)$, the level set $\Omega_1'=\{x\in\Omega\mid u_2(x)\le -\tau_1\}$ is convex with nonempty interior, and the continuity gives $|u_1|\ge m_1>0$ in $\overline{\Omega_1'}$. 
From \eqref{inf},
$$
\begin{cases}
\lambda_{\rm  MA} m_1^d\leq \det D^2 u_{2}=R(u_1)|u_1|^d + \varepsilon_1  \leq R(u_1) M_1^d+\xi_1 &\text{ in }\Omega_1',
\\
u_{2}= -\tau_1 &\text{ on }\partial\Omega_1'. 
\end{cases}
$$ 
According to the Caffarelli’s localization theorem \cite{caffarelli1990localization} (see also \cite[Theorem 4.10]{figalli2017monge} and \cite[Corollary 5.2.2]{gutierrez2016monge}), $u_2$ is strictly convex in $\Omega_1'$.
Since $u_1\in \mathcal{C}^{0,{1}/{d}}(\Omega_1')$, using the Caffarelli's $\mathcal{C}^{2,\alpha}$ estimates \cite{caffarelli1990interior}, we have $u_2\in \mathcal{C}^{2,{1}/{d}}_{\rm loc}(\Omega_1')$. 
In addition, due to the arbitrary choice of $\tau_1\in (0, M_2)$, we conclude that $u_2\in \mathcal{C}^{2,{1}/{d}}(\Omega)$, and $u_2$ is strictly convex in $\Omega$.

Suppose that $u_{k}$ is strictly convex in $\Omega$ and $u_{k}\in \mathcal{C}^{2k, {1}/{d}}(\Omega)$ for all $k \ge2$ holds up to $k=n-1$ where $n\geq 3$ is a certain integer. 
It is then sufficient to prove the proposition by showing that the above condition also holds for $k=n$. 
For each $\tau_{n-1}\in (0, M_{n})$, the level set $\Omega_{n-1}'=\{x\in\Omega\mid u_{n}(x)\leq -\tau_{n-1}\}$ is convex with nonempty interior, and continuity gives $|u_{n-1}|\geq m_{n-1}>0$ in $\overline{\Omega_n'}$.
Note that
$$
\begin{cases}
\lambda_{\rm MA} m_{n-1}^d\leq \det D^2 u_{n}=R(u_{n-1})|u_{n-1}|^d+\varepsilon_{n-1}  \leq R(u_{n-1}) M_{n-1}^d+\xi_{n-1}&\text{ in }\Omega_n',
\\
u_{n}= -\tau_{n-1} & \text{ on }\partial\Omega_n'. 
\end{cases}
$$ 
Thus, the function $u_{n}$ is strictly convex in $\Omega_{n-1}'$ by the Caffarelli’s localization theorem \cite{caffarelli1990localization}. 
Furthermore, by the induction hypothesis, $u_{n-1}\in \mathcal{C}^{2((n-1)-1),{1}/{d}}(\Omega_{n-1}')$.
In the interior of $\Omega_{n-1}'$, the equation $\det D^2 u_{n}= R(u_{n-1})|u_{n-1}|^d+\varepsilon_{n-1}$ now becomes uniformly elliptic with $\mathcal{C}^{2((n-1)-1),{1}/{d}}$ right-hand side. 
Therefore, we have $u_{n}\in \mathcal{C}^{2(n-1),{1}/{d}}_{\rm loc}(\Omega_{n-1}')$.
Finally, since $\tau_{n-1}\in (0, M_{n})$ is arbitrary, we conclude that $u_{n}\in \mathcal{C}^{2(n-1), {1}/{d}}(\Omega)$, and $u_{n}$ is strictly convex in $\Omega$.
This completes the proof.  
\end{proof}

\end{document}